\newtheorem{theorem}{Theorem}[section]
\newtheorem{thm}{Theorem}[section]
\newtheorem{lem}[theorem]{Lemma}
\newtheorem{cor}[theorem]{Corollary}
\newcounter{maintheorem}
\theoremstyle{remark}
\newtheorem{rmk}[theorem]{Remark}
\theoremstyle{definition}
\newtheorem{defn}[theorem]{Definition}
\numberwithin{equation}{section}
\newcommand{\nn}[1]{{\left\vert\kern-0.25ex\left\vert\kern-0.25ex\left\vert #1
\right\vert\kern-0.25ex\right\vert\kern-0.25ex\right\vert}}
\newcommand{\revcoloneqq}{\mathrel{=\vcentcolon}}
\renewcommand{\leq}{\leqslant}
\renewcommand{\geq}{\geqslant}
\newcounter{smallromans}
\begin{document}
\title[]{Nonexistence of Solutions to classes of parabolic inequalities in the Riemannian setting}

\author[D.-E. von Criegern]{Dorothea-Enrica von Criegern}
\address[D.-E. von Criegern]{Politecnico di Milano, Dipartimento di Matematica, Piazza Leonardo da Vinci 32, 20133 Milano, Italy}
\email{dorotheaenrica.von@polimi.it}

\author[G.~Grillo]{Gabriele Grillo}
\address[G.~Grillo]{Politecnico di Milano, Dipartimento di Matematica, Piazza Leonardo da Vinci 32, 20133 Milano, Italy}
\email{gabriele.grillo@polimi.it}

\author[D.D.~Monticelli]{Dario D. Monticelli}
\address[D.D.~Monticelli]{Politecnico di Milano, Dipartimento di Matematica, Piazza Leonardo da Vinci 32, 20133 Milano, Italy}
\email{dario.monticelli@polimi.it}

%\thanks{The research of the authors has been partially supported by GNAMPA (INdAM -- Istituto Nazionale di Alta Matematica).}

%\keywords{Porous medium equation, smoothing estimates, Green function, Riemannian manifolds, Ricci curvature, potential estimates}
\subjclass[2020]{35K57, 35K55, 58J35, 35A01}
%\date{\today}

\begin{abstract} We establish conditions for nonexistence of global solutions for a class of quasilinear parabolic problems with a potential on complete, non-compact Riemannian manifolds, including the Porous Medium Equation and the p-Laplacian with a potential term. Our results reveal the interplay between the manifold's geometry, the power nonlinearity, and the potential's behavior at infinity. Using a test function argument, we identify explicit parameter ranges where nonexistence holds.

\end{abstract}
\maketitle

\section{Introduction}

We investigate the nonexistence of global, nonnegative, nontrivial weak solutions (in the sense of Definition~\ref{thm:Defn} below) to parabolic differential inequalities of the type

\begin{equation}\label{main eq}
    \partial_t u \geq \frac{1}{a(x)} \mathrm{div}\Big( a(x) u^{m-1} |\nabla u|^{p-2} f(|\nabla u|) \nabla u \Big) + V(x,t) u^q\;\;\;\;\;\mathrm{in}\;M\times(0,\infty),
\end{equation}

\noindent where $M$ is a complete, non-compact, $N$-dimensional Riemannian manifold with metric given by $g$. The operators $\mathrm{div}$ and $\nabla$ denote the divergence, respectively the gradient with respect to $g$. We further assume $p,q > 1$, $m\geq1$, and that the potential $V\in L^1_{\mathrm{loc}}(M\times(0,\infty))$ satisfies $V>0$ a.e. in $M \times(0,\infty)$. In addition, $0\leq f \leq K$ for some $K>0$, $a\in \mathrm{Lip}_{\mathrm{loc}}(M)$ with $a>0$ a.e. in $M \times(0,\infty)$.

Clearly, as special cases we deal with reaction-diffusion equations in which the diffusion part is driven by the $p$-Laplacian, or by the porous medium diffusion, or by the doubly nonlinear evolution equation, see Section \ref{appl} for an explicit discussion and examples. We stress that our conditions, in particular the fact that $f$ is required to satisfy only $0\leq f \leq K$, are general enough to deal with other classes of evolution equations, like e.g. evolution of graphs by mean curvature, see Example \bf I4 \rm below.

In the Euclidean setting, reaction-diffusion equations have a long history, starting from the work of Fujita \cite{Fujita, Fuj2}, for the differential equation
\[
u_t=\Delta u+u^q.
\]
in $\mathbb R^N\times(0,+\infty)$. It is well known from his work and, for example, from \cite{Haya, KST}, that if $q\in\left(1, +\tfrac2N\right]$, no nontrivial nonnegative global solution exists, since all such solutions blow up in finite time; whereas if $q>\tfrac2N$, solutions corresponding to sufficiently small nonnegative initial data exist globally in time. It would be impossible to summarize the huge amount of research stemming from such seminal results, so we limit ourselves to quoting, without claim to generality, some further classical work like \cite{Galak1, Galak2, GalakLev, Lev, MitPohoz359, MitPohozAbsence, MitPohozMilan, PoTe0, PuTe, Weiss, Weiss2, SGKM} in which one can find discussions of various different but related differential inequalities driven by the $p$-Laplacian, by the porous medium diffusion or by doubly nonlinear operators. %We comment that, specifically in the latter case, a number of questions are open in the fast diffusion case, i.e. when dealing with the equation
%\[
%u_t=\Delta u^m+u^q
%\]
%under the assumption $m\in(0,1)$, in which the diffusion is so fast to make not easily prediction the outcome of the competition with the source term $u^q$, especially when, for $N\ge3$, one has $m\in\left(0,\tfrac{N-2}N\right)$, see e.g. \cite{GV} for some of the existing results.

The analysis of similar problems in the setting of Riemannian manifolds is more recent. A major contribution was given in \cite{Zhang}, in which the heat equation is treated together with the porous medium equation and, if the parameter $m$ in the equation $u_t=\Delta u^m+u^\sigma$ is sufficiently close to one, the fast diffusion one. A key feature of the results proved there is the request of polynomial volume growth, which is required with matching upper and lower bounds of the form
\[
\textrm{Vol}\,(B_r(x))\le C\,r^\alpha\ \ \ \forall x\in M, \forall r\ge1,
\]
where $B_r(x)$ is the Riemannian ball centered at $x\in M$ and of radius $r$, and Vol denotes the Riemannian volume. Besides, when dealing with the reaction-diffusion equation related to heat diffusion, namely with
\begin{equation}\label{heat}
u_t=\Delta u+V(x)u^q,
\end{equation}
a strict assumption on $V(x)$ is required, in the sense that $V(x)$ needs to satisfy matching upper a lower bound of the form
\[
cd(x,o)^m\le V(x)\le Vd(x,o)^m
\]
for a given $m>-2$, $o\in M$, whenever $d(x,o)$ is sufficiently large, where $d$ denotes the Riemannian distance, and a further additional condition on the volume density. Later on, some of these condition have been relaxed in \cite{GSXX}, again for solutions to \eqref{heat}. In \cite{dariospaper}, nonexistence results for nonnegative \it supersolutions \rm to the reaction diffusion problem for the $p$-Laplacian
\begin{equation*}%\label{plapl}
u_t=\Delta_pu+u^q,
\end{equation*}
where $\Delta_p=\nabla\cdot(|\nabla u|^{p-2}\nabla u)$, has been derived, under suitably weighted  growth assumptions on the volumes of Riemanian balls, as the radius tends to infinity. Finally, the recent work \cite{SX} deals with supersolutions to the doubly nonlinear reaction-diffusion equation
\begin{equation}\label{doubly}
u_t=\Delta_p(u^m)+u^q.
\end{equation}
 %, under suitable restrictions on $p, m, q$, namely $p>0$, $m\not=0$, $p+m-2>0$, $q>\textrm{min}\,(1, p+m-2)$. sembra più generale del nostro, lo diciamo in dettaglio o no? Poi ovviamente le nostre condizioni sono meglio.
Volumes are required to satisfy a bound of the type
\[
\textrm{Vol}\,(B_r(x))\le C\,r^\alpha(\log r)^\beta\ \ \ \forall x\in M, \forall r\ge r_0>1,
\]
%non sono sicuro che non servano solo le palle centrate in un punto fissato
where $\alpha, \beta$ are related to the parameters $p,m,q$ in \eqref{doubly}. It is important to comment that the results of \cite{SX} hold for \it strictly positive \rm solutions, since the choice of test function made there makes sense only in that case. It is well-known that in the porous medium case, even with appropriate forcing, there exist solutions which are compactly supported for all time, hence the extension to \it nonnegative \rm solutions is relevant and in our view natural.

We provide here results for the more general differential inequality \eqref{main eq}, under assumptions of a different, and in several crucial aspects weaker, type, with an approach which is similar in spirit to the one used in \cite{GSXX, dariospaper}. As in the previously mentioned papers we do not make any use of curvature assumptions, nor further strict geometric conditions e.g. on the possible cut-locus, but only require conditions related to volume growth, with an emphasis on assumptions regarding weighted volume growth, where the weight is related to the potential $V$ in \eqref{main eq}, see the conditions in Section \ref{thm:sec11}. We find particularly relevant that, due to the presence of a potential $V$ in \eqref{main eq}, \it volumes need not be required to satisfy a polynomial upper bound\rm, see the bounds in \bf HP1-2 \rm below, provided $V$ compensates for a possible faster growth. In fact, volume growth can be arbitrarily fast in manifolds having negative sectional curvatures, and very different results can hold in such setting, see the recent works \cite{BPT, GMP0, GMP, P1, P2}. Such setting can be considered here provided $V$ is sufficiently large, in an integral sense, see again assumptions \bf HP1-2\rm. We also stress the presence of the coefficients $a(x), f(\nabla u)$ in \eqref{main eq}, and the fact that the potential $V$ can be time-dependent as well. Finally, as already mentioned, we stress that our assumptions are general enough to deal with different classes of evolution equations,  e.g. with evolution of graphs by mean curvature, see Example \bf I4 \rm below.

We comment that we will not address here the complementary issue of global existence of solutions to parabolic quasilinear problems similar to \eqref{main eq} on $\mathbb R^n$ or on Riemannian manifolds, which is similarly widely studied in the literature, see e.g. without claim for completeness, the papers \cite{GMP2, GMP3, MT, MTS1, MTS2, MOP}.

\subsection{Main Results}\label{thm:sec11}

We shall use the notation, here and in the sequel without further comment, $S \coloneq M\times(0,\infty)$. Fix some $x_0\in M$ and denote by $r(x)\coloneq \mathrm{dist}(x_0,x)$ the Riemannian distance between $x_0$ and $x$. For each $R > 0$, $\theta_1, \theta_2 \geq 1$, set
\begin{equation}\label{annulus}
    E_R \coloneq \{(x,t)\in S \;:\; r(x)^{\theta_2}+t^{\theta_1} \leq R^{\theta_2}\}.
\end{equation}
We further introduce the following constants:
\begin{equation}\label{s}
\begin{split}
    \bar{s}_1 &\ \coloneq \frac{q}{q-1}\theta_2, \;\;\;\;\; \bar{s}_2 \coloneq \frac{1}{q-1}\\
      \bar{s}_3 &\ \coloneq  \frac{pq}{q-p-m+2}\theta_2, \;\;\;\;\; \bar{s}_4 \coloneq \frac{p+m-2}{q-p-m+2}. \\
\end{split}
\end{equation}

The following two sets of conditions are the main hypotheses under which we will prove the nonexistence results of global, nonnegative, nontrivial weak solutions to $\eqref{main eq}$; by $\mu_a$ we denote the weighted measure $a\mu$, where $\mu$ is the Riemannian measure on $M$. Throughout this paper, $C$ will denote a generic constant that may vary from line to line, but is independent of all relevant parameters.

\begin{itemize}

    \item[(\textbf{HP}$\mathbf{1}$)] Assume that there exist constants $\theta_1\geq 1, \theta_2\geq1, C_0 > 0, C > 0$, and $R_0 > 1$, $\epsilon_0 > 0$ such that for all $R>R_0$ and all $0<\epsilon < \epsilon_0$, one has

    \begin{itemize}
        \item[(i)]
        \begin{equation}\label{HP11}
            \int\int_{E_{2R} \setminus E_R} t^{(\theta_1-1)\left(\frac{q}{q-1}-\epsilon\right)}\; V^{-\frac{1}{q-1}+\epsilon}\;d\mu_a dt
            \leq C R^{\bar{s}_1+C_0\epsilon} \log(R)^{s_2};
        \end{equation}

        \item[(ii)]
        \begin{equation}\label{HP12}
            \int\int_{E_{2R} \setminus E_R} r(x)^{(\theta_2-1)p\left(\frac{q}{q-p-m+2}-\epsilon\right)}\; V^{-\frac{p+m-2}{q-p-m+2}+\epsilon}\;d\mu_a dt
            \leq C R^{\bar{s}_3+C_0\epsilon} \log(R)^{s_4}
        \end{equation}

        for some $0\leq s_2 < \bar{s}_2$, $0\leq s_4 < \bar{s}_4$, $E_R$ being as in \eqref{annulus} and $\bar{s}_1,\ldots, \bar{s}_4$ as in \eqref{s}.
    \end{itemize}

    \medskip

    \item[(\textbf{HP}$\mathbf{2}$)] Assume that there exists constants $\theta_1\geq 1, \theta_2\geq1, C_0 > 0, C > 0$, and $R_0 > 1$, $\epsilon_0 > 0$ such that for all $R>R_0$ and all $0<\epsilon < \epsilon_0$, one has

    \begin{itemize}
        \item[(i)]
        \begin{equation}\label{HP21}
            \int\int_{E_{2R} \setminus E_R} t^{(\theta_1-1)\left(\frac{q}{q-1}-\epsilon\right)}\; V^{-\frac{1}{q-1}+\epsilon}\;d\mu_a dt
            \leq C R^{\bar{s}_1+C_0\epsilon} \log(R)^{\bar{s}_2},
        \end{equation}
        \item[(ii)]
        \begin{equation}\label{HP22}
           \int\int_{E_{2R} \setminus E_R} r(x)^{(\theta_2-1)p\left(\frac{q}{q-p-m+2}-\epsilon\right)}\; V^{-\frac{p+m-2}{q-p-m+2}+\epsilon}\;d\mu_a dt
            \leq C R^{\bar{s}_3+C_0\epsilon} \log(R)^{\bar{s}_4},
        \end{equation}
        \item[(iii)]
        \begin{equation}\label{HP23}
            \int\int_{E_{2R} \setminus E_R} r(x)^{(\theta_2-1)p\left(\frac{q}{q-p-m+2}+\epsilon\right)}\; V^{-\frac{p+m-2}{q-p-m+2}-\epsilon}\;d\mu_a dt
            \leq C R^{\bar{s}_3+C_0\epsilon} \log(R)^{\bar{s}_4},
        \end{equation}
        \end{itemize}
        $E_R$ being as in \eqref{annulus} and $\bar{s}_1,\ldots,\bar{s}_4$ as in \eqref{s}.
\end{itemize}

\begin{rmk}
For all $(x,t)\in E_{2R} \setminus E_R$, we have $t\leq CR^{\theta_2/\theta_1}$ and $r(x)\leq C R$. Hence, the estimates \eqref{HP21}-\eqref{HP23} in \textbf{HP}$\mathbf{2}$ hold in particular when the potential $V$ satisfies the following growth conditions for all $\epsilon > 0$ sufficiently small and all $R > 1$ sufficiently large:

        \begin{equation*}
            \int\int_{E_{2R} \setminus E_R}  V^{-\frac{1}{q-1}+\epsilon}\;d\mu_a dt
            \leq C R^{\frac{\bar{s}_1}{\theta_1}+C_1\epsilon} \log(R)^{\bar{s}_2},
        \end{equation*}
        \begin{equation*}
           \int\int_{E_{2R} \setminus E_R}  V^{-\frac{p+m-2}{q-p-m+2}+\epsilon}\;d\mu_a dt
            \leq C R^{\frac{\bar{s}_3}{\theta_2}+C_1\epsilon} \log(R)^{\bar{s}_4},
        \end{equation*}
        \begin{equation*}
            \int\int_{E_{2R} \setminus E_R}  V^{-\frac{p+m-2}{q-p-m+2}-\epsilon}\;d\mu_a dt
             \leq C R^{\frac{\bar{s}_3}{\theta_2}+C_1\epsilon} \log(R)^{\bar{s}_4},
        \end{equation*}
        for some $C_1\ge0$. Analogous growth conditions hold in the setting of \textbf{HP}$\mathbf{1}$ with appropriate modifications to the exponents.
\end{rmk}

\begin{rmk}\label{thm:rmkepsilon}
    By Fatou's Lemma, we can pass to the limit $\epsilon\rightarrow0$ in all the growth estimates in \textbf{HP}$\mathbf{1}$ and \textbf{HP}$\mathbf{2}$. The above conditions then also hold for $\epsilon=0$.
\end{rmk}

We introduce the following notion of weak solutions:

\begin{defn}\label{thm:Defn}
    Let $p,q>1, m\geq1$ and let $V>0$ a.e. in $M\times (0,\infty)$, $V \in L^1_{\mathrm{loc}}(M\times (0,\infty))$. Let further $0\leq f\leq K$ for some $K>0$ and $a\in \mathrm{Lip}_{\mathrm{loc}}(M)$, $a>0$ a.e. We say that $u$ is a weak solution to Inequality $\eqref{main eq}$ if $u \geq 0$ a.e. in $M \times (0, \infty)$, if $u \in L^q_{\mathrm{loc}}(M \times (0, \infty); V d\mu_a dt)$, and if $u^{p+m-1}$, $|\nabla u|^p u^{m-1}$, and $u(\partial_t u)$ belong to $L^1_{\mathrm{loc}}(M \times (0, \infty); d\mu_a dt)$. Furthermore, for every nonnegative test function $\psi \in \mathrm{Lip}_{\mathrm{loc}}(M \times (0, \infty))$ with compact support, the following inequality must hold
    \begin{equation}\label{equivdefn}
    \begin{split}
        \int_0^{\infty}\int_M \psi u^q V \;d\mu_a dt \leq   &\ \int_0^{\infty}\int_M   \langle \nabla \psi, \nabla u \rangle  u^{m-1} |\nabla u|^{p-2} f(|\nabla u|)\;d\mu_a dt\\
       &\ - \int_0^{\infty}\int_M (\partial_t\psi)u \;d\mu_a dt,
        \end{split}
    \end{equation}
    where $\mu_a$ denotes the weighted measure $a\mu$ in $M$.
\end{defn}

% \begin{rmk}
%     Note that, for $a\in\mathrm{Lip}_{loc}(M)$, $u
%     $ is a weak solution to Problem $\eqref{main eq}$ if and only if for every $\psi\in W^{1,l}(M\times (0,\infty))\cap L^{\infty}(M\times (0,\infty)), \psi \geq0$ with compact support in $M\times (0,\infty)$, one has
%     \begin{equation*}
%     \begin{split}
%         \int_0^{\infty}\int_M \psi u^q V \;d\mu dt \leq &\  \int_0^{\infty}\int_M \Big\langle \nabla \Big(\frac{\psi}{a}\Big), \nabla u \Big\rangle a u^{m-1} |\nabla u|^{p-2} f(|\nabla u|)\;d\mu dt\\
%        &\ - \int_0^{\infty}\int_M (\partial_t\psi)u \;d\mu dt.
%     \end{split}
%     \end{equation*}
%    If $ \psi $ is a nonnegative function that belongs to $ \mathrm{Lip}_{\mathrm{loc}}(M\times (0,\infty)) $ with compact support, then multiplying $ \psi $ by $ a $ results in a valid test function, as described in Definition~\ref{thm:Defn}. Conversely, if $ \psi $ is already a valid test function, dividing it by $ a $ ensures that it remains in $ \mathrm{Lip}_{\mathrm{loc}}(M\times (0,\infty)) $ with compact support.
% \end{rmk}

We will prove the following nonexistence results:

\begin{thm}\label{thm:Thm1}
Let $p>1$, $m\geq1$, $q>\max(p+m-2, 1)$ and $V>0$ a.e. in $M\times (0,\infty)$, $V\in L^1_{loc}(M\times (0,\infty))$. Let further $0\leq f \leq K$ for some $K>0$ and $a\in\mathrm{Lip}_{\mathrm{loc}}(M)$, $a>0$ a.e. in $M$. If $u$ is a global nonnegative solution to Problem $\eqref{main eq}$ and Condition \textbf{HP}$\mathbf{1}$ holds, then $u=0$ a.e. in $M\times (0,\infty)$.
\end{thm}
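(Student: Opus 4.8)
The plan is to run a Caccioppoli-type test function argument on the weak formulation \eqref{equivdefn}, choosing a test function $\psi$ supported in $E_{2R}$ that equals $1$ on $E_R$ and interpolates in the annulus $E_{2R}\setminus E_R$, and then to estimate the two terms on the right-hand side by splitting off the weight $V$ via Young's inequality with exponents tuned to $q$. More precisely, I would take $\psi = \Phi\bigl(R^{-\theta_2}(r(x)^{\theta_2}+t^{\theta_1})\bigr)^{\lambda}$ for a fixed smooth cutoff $\Phi$ (equal to $1$ near $0$, vanishing past $1$) and a large exponent $\lambda$ chosen so that the various negative powers of $\psi$ produced below stay integrable; then $|\partial_t\psi| \le C\psi^{1-1/\lambda} R^{-\theta_2} t^{\theta_1-1}$ and $|\nabla\psi|\le C\psi^{1-1/\lambda} R^{-\theta_2} r(x)^{\theta_2-1}$ on the annulus, using only $|\nabla r|\le 1$ (no curvature or cut-locus hypotheses are needed).

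For the time term $-\int(\partial_t\psi)u\,d\mu_a dt$, I would write the integrand as $\bigl(\psi^{1/q} u V^{1/q}\bigr)\cdot\bigl(\psi^{-1/q}|\partial_t\psi| V^{-1/q}\bigr)$ and apply Young's inequality with exponents $q$ and $q'=q/(q-1)$; the first factor to the power $q$ reproduces a fraction of the left-hand side $\int \psi u^q V\,d\mu_a dt$, which is absorbed, while the second factor to the power $q'$ gives, after inserting the bound on $|\partial_t\psi|$, an integral of the shape $R^{-q'\theta_2}\int_{E_{2R}\setminus E_R} t^{(\theta_1-1)q'}\psi^{(1-1/\lambda)q'-q'/q} V^{-1/(q-1)}\,d\mu_a dt$; up to the harmless power of $\psi$ this is exactly controlled by \eqref{HP11} (with the $\epsilon$ there absorbing the $\psi$-power and any slack), giving a bound $C R^{\bar s_1 - q'\theta_2 + C_0\epsilon}\log(R)^{s_2}$. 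Since $\bar s_1 = q'\theta_2$, the net power of $R$ is $C_0\epsilon$, so after choosing $\epsilon$ small this is $o((\log R)^{\bar s_2})$, and because $s_2 < \bar s_2$ strictly, the right tail of the bound still tends to $0$ along $R\to\infty$ after we divide through appropriately — more precisely the logarithmic gap $s_2<\bar s_2$ is what forces the final contradiction, as in \cite{GSXX, dariospaper}.

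The diffusion term $\int\langle\nabla\psi,\nabla u\rangle u^{m-1}|\nabla u|^{p-2}f(|\nabla u|)\,d\mu_a dt$ is handled analogously but is the more delicate step: using $0\le f\le K$ and Cauchy–Schwarz on the inner product, it is bounded by $K\int |\nabla\psi|\,|\nabla u|^{p-1} u^{m-1}\,d\mu_a dt$, and I would split the integrand as $\bigl(\psi^{(p+m-1)/q} u^{(p+m-1)\cdot ?}V^{?}\bigr)$-type factors so that Young's inequality with exponents $q/(p+m-1)$ and its conjugate $q/(q-p-m+2)$ again produces a small multiple of $\int\psi u^q V\,d\mu_a dt$ (this is where $q > p+m-2$ is used, to make the conjugate exponent positive and finite) plus a remainder of the form $R^{-p q'_*\theta_2}\int_{E_{2R}\setminus E_R} r(x)^{(\theta_2-1)p q'_*}\psi^{(\cdots)} V^{-(p+m-2)/(q-p-m+2)}\,d\mu_a dt$ with $q'_* = q/(q-p-m+2)$, which matches \eqref{HP12} and again gives a net power $R^{C_0\epsilon}(\log R)^{s_4}$ with $s_4<\bar s_4$. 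The main obstacle is bookkeeping: one must simultaneously pick the cutoff exponent $\lambda$ large enough that every fractional power of $\psi$ appearing after the two applications of Young's inequality is $\ge 0$ (so $\psi\le 1$ lets us drop it), pick the two Young parameters so that the absorbed terms together use up strictly less than the full left-hand side, and then let $R\to\infty$ and $\epsilon\to 0$ in the right order; the strict inequalities $s_2<\bar s_2$, $s_4<\bar s_4$ guarantee that the surviving right-hand side is $o(1)\cdot\int_S \psi u^q V$, forcing $\int_S u^q V\,d\mu_a dt = 0$ and hence $u\equiv 0$. (If $\int_S u^q V\,d\mu_a dt$ were a priori infinite one first runs the argument with an extra truncation and uses Remark \ref{thm:rmkepsilon} together with monotone/dominated convergence to pass to the limit.)
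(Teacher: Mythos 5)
Your proposal captures the general flavor (a test-function/Young's-inequality argument driven by \textbf{HP1}), but it has two genuine gaps, and the second one is fatal to the conclusion.

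\emph{Gap 1: the gradient factor cannot be absorbed.} After Cauchy–Schwarz and $0\le f\le K$, your diffusion term is
$K\int|\nabla\psi|\,|\nabla u|^{p-1}u^{m-1}\,d\mu_a\,dt$. You propose to split off $V$ and apply Young with exponents $\frac{q}{p+m-1}$ and $\frac{q}{q-p-m+2}$ so as to produce a small multiple of $\int\psi u^qV$. But no matter how you distribute the factors, the first (absorbing) factor can only involve $\psi,u,V$ — the quantity $\int\psi u^qV$ has no gradient in it — so $|\nabla u|^{p-1}$ must live entirely in the second factor, which after raising to the conjugate power contains an uncontrolled $|\nabla u|^{(p-1)q/(q-p-m+2)}$. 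Nothing in Definition~\ref{thm:Defn} or \textbf{HP1} bounds that. The paper's fix is the key Lemma~\ref{thm:Lem1}: test with $\psi=u_\epsilon^\alpha\phi^s\chi_\beta$ with $\alpha<0$ and $u_\epsilon=u+\epsilon$. Because $\alpha<0$, the first term that Definition~\ref{thm:Defn} produces, namely $\alpha\int|\nabla u|^p u_\epsilon^{\alpha-1}\phi^s\chi_\beta u^{m-1}f\,d\mu_a\,dt$, moves to the \emph{left} with sign $+|\alpha|$, and this gradient term is then used in a first Young step (exponents $p,p/(p-1)$) to absorb the $|\nabla u|^{p-1}$ coming from $\langle\nabla\phi,\nabla u\rangle$. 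Only after that absorption does one perform the second Young step against $\int\phi^s\chi_\beta u^{\,\cdots}V$, with the exponents $b=\frac{q+\alpha}{p+\alpha+m-2}$, $b'=\frac{q+\alpha}{q-p-m+2}$ you had in mind. Your proposal skips the negative-power device entirely, so the diffusion term never closes.

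\emph{Gap 2: the logarithmic gain never appears.} Even granting the absorption, your cutoff $\psi=\Phi(\cdot)^\lambda$ has a fixed power $\lambda$, so the final estimate you'd get from \eqref{HP11} is of the form $R^{-q'\theta_2}\cdot C\,R^{\bar s_1+C_0\epsilon}(\log R)^{s_2}=C\,R^{C_0\epsilon}(\log R)^{s_2}$, and even with $\epsilon\to0$ (Remark~\ref{thm:rmkepsilon}) this is $C(\log R)^{s_2}\to\infty$. The strict inequalities $s_2<\bar s_2$, $s_4<\bar s_4$ buy you nothing here: $\bar s_2,\bar s_4$ never enter your estimate. The paper's mechanism for converting the gap $s_2<\bar s_2$ into decay is the $R$-dependent exponent: both the negative power $\alpha$ in $u_\epsilon^\alpha$ and the exponent $C_1\alpha$ in $\phi$ (see \eqref{phi}) are taken with $\alpha=-\frac1{\log R}$, and the tail integrals $\int_{R^{1/\theta_2}}^\infty z^{b-1}\log(z)^{s_4}\,dz$ with $|b|\sim|\alpha|$ produce factors of order $|\alpha|^{-s_4-1}$. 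When combined with the $|\alpha|$-powers from $\nabla\phi$ and from Lemma~\ref{thm:Lem1} one lands on $C\big(|\alpha|^{\frac{p+m-2}{q-p-m+2}-s_4}+|\alpha|^{\frac1{q-1}-s_2}\big)$, whose exponents are positive exactly because $s_2<\bar s_2=\frac1{q-1}$ and $s_4<\bar s_4=\frac{p+m-2}{q-p-m+2}$; since $|\alpha|=\frac1{\log R}\to0$, this forces $\int_{E_R}u^{q+\alpha}\chi_\beta V\,d\mu_a\,dt\to0$. Your sketch invokes this conclusion but does not build the device that produces it.

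\end{document}
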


\begin{thm}\label{thm:Thm2}
Let $p>1$, $m\geq1$, $q>\max(p+m-2, 1)$ and $V>0$ a.e. in $M\times (0,\infty)$, $V\in L^1_{loc}(M\times (0,\infty))$. Let further $0\leq f \leq K$ for some $K>0$ and $a\in\mathrm{Lip}_{\mathrm{loc}}(M)$, $a>0$ a.e. in $M$. If $u$ is a global nonnegative solution to Problem $\eqref{main eq}$ and Condition \textbf{HP}$\mathbf{2}$ holds, then $u=0$ a.e. in $M\times (0,\infty)$.
\end{thm}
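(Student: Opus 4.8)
The strategy is the classical test-function (capacity) method of Mitidieri–Pohozaev, adapted to the weighted Riemannian setting and to the general operator in \eqref{main eq}. Let $u$ be a global nonnegative weak solution. The plan is to plug into the weak formulation \eqref{equivdefn} a test function of the form $\psi = \phi_R^\lambda$, where $\phi_R$ is a Lipschitz cutoff equal to $1$ on $E_R$, supported in $E_{2R}$, with $|\nabla_x \phi_R| \lesssim r(x)^{-(\theta_2-1)} R^{-\theta_2 + \theta_2} \cdot (\dots)$ and $|\partial_t \phi_R| \lesssim t^{-(\theta_1-1)}R^{-\dots}$ reflecting the anisotropic scaling built into \eqref{annulus}; the exponent $\lambda$ is chosen large enough (depending on $p,m,q$) so that all integrands that arise are genuinely integrable. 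Writing $I(R) \coloneq \int\int_{E_R} \psi u^q V\, d\mu_a\, dt$, the left-hand side of \eqref{equivdefn} dominates $I(R)$, while the two terms on the right are estimated separately.

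For the gradient term, using $0\le f\le K$ and $|\langle\nabla\psi,\nabla u\rangle|\le |\nabla\psi|\,|\nabla u|$, one bounds $\int\int \langle\nabla\psi,\nabla u\rangle u^{m-1}|\nabla u|^{p-2}f\,d\mu_a dt$ by $CK\int\int_{E_{2R}\setminus E_R} |\nabla\psi|\, |\nabla u|^{p-1} u^{m-1}\, d\mu_a dt$. Now apply Young's inequality with exponents $\tfrac{q}{p+m-2}$ and $\tfrac{q}{q-p-m+2}$ (legitimate since $q>p+m-2$) to split off a factor $u^q$: schematically, $|\nabla\psi|\,|\nabla u|^{p-1}u^{m-1} \le \delta\, \psi u^q V + C_\delta\, \psi^{1-\frac{q}{q-p-m+2}} |\nabla\phi_R|^{\frac{q}{q-p-m+2}} \phi_R^{\text{(correction)}} V^{-\frac{p+m-2}{q-p-m+2}}$, where the $\psi$-power stays nonnegative because $\lambda$ was chosen large. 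The first piece is absorbed into the left-hand side. The remaining piece, after inserting the cutoff gradient bound, is exactly of the form controlled by \eqref{HP22} (and, handling the region where $|\nabla u|\le 1$ versus $|\nabla u|>1$ or equivalently splitting the Young inequality, by \eqref{HP23}), giving a bound $\le C R^{\bar s_3 + C_0\epsilon}\log(R)^{\bar s_4}$. For the time term, $|\int\int (\partial_t\psi) u\, d\mu_a dt| \le C\int\int_{E_{2R}\setminus E_R} |\partial_t\phi_R|\,\phi_R^{\lambda-1} u\, d\mu_a dt$, and Young's inequality with exponents $q$ and $\tfrac{q}{q-1}$ peels off $u$ against $u^q V$, leaving a term controlled by \eqref{HP21} and bounded by $C R^{\bar s_1 + C_0\epsilon}\log(R)^{\bar s_2}$.

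Combining, one arrives at $I(R) \le C\big(R^{\bar s_1 + C_0\epsilon}\log(R)^{\bar s_2} + R^{\bar s_3 + C_0\epsilon}\log(R)^{\bar s_4}\big)$ for all large $R$ and small $\epsilon>0$, where the relevant integrals on the right also carry a hidden factor of the absorbed quantity raised to a power, so that a bootstrap/feedback on $I(R)$ itself is available. The key structural point is that $\bar s_1,\dots,\bar s_4$ in \eqref{s} are precisely the \emph{critical} exponents at which the a priori bound degenerates: because $s_2<\bar s_2$ and $s_4<\bar s_4$ are \emph{strict} in \textbf{HP}$\mathbf1$ — and in \textbf{HP}$\mathbf2$ the extra estimate \eqref{HP23} lets one run the Young inequality in the reverse direction and gain a genuine negative power of a large parameter — one can let $\epsilon\to 0$ (justified by Remark \ref{thm:rmkepsilon}) and then exploit the resulting slack, together with the self-improving feedback $I(R)\le C\,I(2R)^\gamma \cdot(\text{small})$ with $\gamma<1$, to conclude that $I(R)\to 0$, whence $\int\int_S u^q V\, d\mu_a dt = 0$ and therefore $u\equiv 0$ a.e. by $V>0$. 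The main obstacle I anticipate is bookkeeping the exponents in the two Young inequalities so that (a) the leftover $\psi$-powers remain nonnegative for a single fixed large $\lambda$, (b) the powers of $r(x)$ and $t$ produced by the cutoff gradients match exactly those appearing in \textbf{HP}$\mathbf1$–$\mathbf2$ after using $r(x)\le CR$, $t\le CR^{\theta_2/\theta_1}$ on the annulus, and (c) in the \textbf{HP}$\mathbf1$ case the logarithmic slack $s_i<\bar s_i$ is actually what closes the argument in the borderline situation where the power of $R$ is exactly critical — this requires care in how the feedback inequality is iterated, essentially choosing $R=R_k$ along a geometric sequence and summing.
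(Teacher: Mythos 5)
There is a genuine gap, and it occurs at the very first estimate of the gradient term. You propose to bound
\[
\int\!\!\int \langle\nabla\psi,\nabla u\rangle\, u^{m-1}|\nabla u|^{p-2} f\, d\mu_a\, dt
\quad\text{by}\quad
CK\int\!\!\int_{E_{2R}\setminus E_R}|\nabla\psi|\,|\nabla u|^{p-1}u^{m-1}\, d\mu_a\, dt
\]
and then to run Young's inequality with exponents $\tfrac{q}{p+m-2}$ and $\tfrac{q}{q-p-m+2}$ so as to ``split off a factor $u^qV$'' and absorb it into the left-hand side, leaving a $u$-free remainder. This step cannot work: the integrand contains $|\nabla u|^{p-1}$, over which you have no a priori control, and no choice of Young exponents lets you bound $|\nabla u|^{p-1}u^{m-1}$ by a power of $u$ (times $V$) plus a quantity independent of $u$. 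With a linear-in-$u$ test function $\psi=\phi_R^{\lambda}$, the weak formulation \eqref{equivdefn} gives no gradient term of the correct sign on the left to absorb this, so the argument stalls immediately; this is not a bookkeeping problem but a structural one. The paper circumvents it via Lemma~\ref{thm:Lem1}: one tests with the nonlinear function $\psi=u_{\epsilon}^{\alpha}\phi^{s}\chi_{\beta}$ with $\alpha<0$ and $u_{\epsilon}=u+\epsilon$, so that the chain rule produces the sign-definite term $\alpha\int|\nabla u|^{p}u_{\epsilon}^{\alpha-1}u^{m-1}\phi^{s}\chi_{\beta}f$, which, being negative, migrates to the left-hand side and supplies the missing control of $\int|\nabla u|^{p}u^{m+\alpha-2}\phi^{s}\chi_{\beta}f$. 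The cross term $\langle\nabla\phi,\nabla u\rangle$ is then absorbed against this gradient integral (not against $u^{q}V$), which is the standard Caccioppoli mechanism your plan is missing. The regularization $u_{\epsilon}=u+\epsilon$ with $\epsilon\to0$ at the end is also essential here, since it is precisely what allows one to treat merely nonnegative, not strictly positive, solutions.

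A second, related gap concerns the ``self-improving feedback'' $I(R)\le C\,I(2R)^{\gamma}\cdot(\text{small})$ you invoke at the end. As written, your Young-inequality decomposition (if it worked) would absorb the whole $u$-dependent piece into the left-hand side and leave a purely $u$-free remainder, yielding $I(R)\le CR^{\ldots}$ with no $I$ on the right at all; there is no mechanism in your argument that produces the quantity $\int\!\!\int_{S\setminus K}u^{q}\phi^{s}\chi_{\beta}V$ on the right-hand side. In the paper this feedback is the content of Lemma~\ref{thm:Lem2}, which is obtained by testing with $\phi^{s}\chi_{\beta}$ and applying \emph{H\"older} (not Young) inequalities so that a factor $\bigl(\int\!\!\int_{S\setminus K}u^{q}\phi^{s}\chi_{\beta}V\bigr)^{1/q}$ (and a similar factor with exponent $\frac{(1-\alpha)(p-1)+(m-1)}{pq}$) survives explicitly, with the first factor estimated by Lemma~\ref{thm:Lem1}. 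Without that structure the extra hypothesis \eqref{HP23}, which you rightly flag as what distinguishes \textbf{HP}$\mathbf2$ from \textbf{HP}$\mathbf1$, has nothing to act on, and the argument cannot close in the borderline case $\log$-exponents $\bar s_2,\bar s_4$. In short: you need (a) the nonlinear test function of Lemma~\ref{thm:Lem1} to control the gradient term at all, and (b) the H\"older decomposition of Lemma~\ref{thm:Lem2} to generate the genuine feedback inequality; both are absent from the proposal.
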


In order to prove the above theorems, we will use a test function argument based on Definition~\ref{thm:Defn}, aiming to bound the $u$-dependent terms by integrals independent of $u$, which will allow us to apply Hypotheses \textbf{HP}$\mathbf{1}$ and \textbf{HP}$\mathbf{2}$.

\subsection{Applications}\label{appl}

The nonexistence results in Theorems~\ref{thm:Thm1} and~\ref{thm:Thm2} imply nonexistence of global solutions for certain parabolic differential (in)equalities of the form in $\eqref{main eq}$. We highlight four well-studied equations, including reaction-diffusion equation in which the diffusion part correspond to the Porous Medium Equation, see \cite{V}, and the $p$-Laplacian evolution equation, see e.g. \cite{DGV}.

In the following, we let $M$ be a complete and non-compact Riemannian manifold.

\begin{itemize}
    \item[\textbf{(I}$\mathbf{1}$\textbf{)}] \underline{The $p$-Laplacian of $u^m$ (doubly nonlinear diffusion):} Let $p>1$ and $\alpha\geq1$. Choosing $a\equiv1, f\equiv \alpha^{p-1}$, and $m=(\alpha-1)(p-1)+1$ in $\eqref{main eq}$ yields
        \begin{equation}\label{plapm}
            \partial_t u \geq  \Delta_p(u^{\alpha}) + V(x,t) u^q \;\;\;\;\;\mathrm{in}\;M\times(0,\infty),
        \end{equation}
        where we recall that the $p$-Laplacian of a function $v$ is given by
        \begin{equation*}
            \Delta_p(v) = \mathrm{div}\Big( |\nabla v|^{p-2} \nabla v \Big).
        \end{equation*}
        %Indeed, we have
        %\begin{equation*}%\label{diffcoeff}
          %  \Delta_p(u^{\alpha})=\mathrm{div}\Big( \alpha^{p-1}u^{(\alpha-1)(p-1)} |\nabla u|^{p-2} \nabla u \Big)
        %\end{equation*}
        The differential inequalities that will be introduced below in \textbf{(I}$\mathbf{2}$\textbf{)} and \textbf{(I}$\mathbf{3}$\textbf{)} are special cases of $\eqref{plapm}$.
   \item[\textbf{(I}$\mathbf{2}$\textbf{)}] \underline{The $p$-Laplacian of $u$:} Inserting $m=1$ in $\eqref{plapm}$, we arrive at
        \begin{equation*}%\label{plap}
             \partial_t u \geq  \Delta_p(u) + V(x,t) u^q\;\;\;\;\;\mathrm{in}\;M\times(0,\infty).
        \end{equation*}
    \item[\textbf{(I}$\mathbf{3}$\textbf{)}] \underline{The Laplacian of $u^m$ (porous medium equation):} Letting $p=2$ in $\eqref{plapm}$, yields, for $m\geq1$,
        \begin{equation*}%\label{mezzpor}
            \partial_t u \geq  \Delta(u^m) + V(x,t) u^q\;\;\;\;\;\mathrm{in}\;M\times(0,\infty).
        \end{equation*}
   \item[\textbf{(I}$\mathbf{4}$\textbf{)}]
   \underline{(Generalized) evolution by mean curvature:} Setting
            \begin{equation*}
            f(|\nabla u|)\coloneq \frac{1}{(|\nabla u|^2+1)^{\theta}},
        \end{equation*}
        for some $\theta\geq 0, a\equiv1, m=1$, and $p=2$ in $\eqref{main eq}$, yields
        \begin{equation}\label{f}
            \partial_t u \geq  \mathrm{div}\left( \frac{\nabla u}{(|\nabla u|^2+1)^{\theta}} \right) + V(x,t) u^q\;\;\;\;\;\mathrm{in}\;M\times(0,\infty).
        \end{equation}
        Clearly, $0\leq f\leq1$, i.e. $f$ is indeed eligible in $\eqref{main eq}$. For $\theta=\frac{1}{2}$, this corresponds to the mean curvature equation for graphs with an additional potential term.
\end{itemize}
\begin{rmk}\label{thm:app1}
Assuming \textbf{HP}$\mathbf{1}$, respectively \textbf{HP}$\mathbf{2}$, one can identify the range of the parameter $q$ for which nonnegative, nontrivial solutions in the sense of Definition~\ref{thm:Defn} to $\eqref{plapm}-\eqref{f}$ above do not exist. This follows directly from Theorem~\ref{thm:Thm1}, respectively Theorem~\ref{thm:Thm2}, and the identification of the value $q_*\coloneq p+m-2$. Recall that both theorems require, as part of their assumptions, the condition $q>\max(q_*,1)$. The following table lists the value for $q_*$ for each of the above equations in $\eqref{plapm}-\eqref{f}$, and the range/value for the parameter $p$.

\begin{table}[ht]
\centering
\begin{tabular}{|p{1cm}|p{1.5cm}|p{1.5cm}|p{1.5cm}|p{1.5cm}|}
\hline
 & \textbf{(I}$\mathbf{1}$\textbf{)} & \textbf{(I}$\mathbf{2}$\textbf{)}, & \textbf{(I}$\mathbf{3}$\textbf{)} & \textbf{(I}$\mathbf{4}$\textbf{)} \\
\hline
$q_*$ & $\alpha(p-1)$ & $p-1$ & $m$ & $1$ \\
\hline
$p$ & $p>1$ & $p>1$ & $p=2$ & $p=2$ \\
\hline
\end{tabular}

\end{table}
\end{rmk}

We shall first state the Euclidean version of our results. Even in such case such results seem to be new in the present generality.

\begin{cor}\label{thm:cor1}
Let $(M,g)=(\mathbb{R}^{N},g_{\mathrm{flat}})$ with $N\geq 1$, $a\in \mathrm{Lip}_{\mathrm{loc}}(\mathbb{R}^{N})\cap L^{\infty}(\mathbb{R}^{N}), a>0$ a.e., and $V\equiv1$. Let further $p>1$ and $m\geq1$. If
\begin{equation*}%\label{cor1res}
    \max(1,p+m-2)<q\leq\frac{p}{N}+p+m-2,
\end{equation*}
then any global nonnegative weak solution to $\eqref{main eq}$ is trivial in $\mathbb{R}^N\times(0,\infty)$.
\end{cor}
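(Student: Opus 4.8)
The plan is to deduce Corollary~\ref{thm:cor1} directly from Theorem~\ref{thm:Thm2} (or Theorem~\ref{thm:Thm1}) by verifying Condition \textbf{HP}$\mathbf{2}$ in the special case $M=\mathbb R^n$, $V\equiv 1$, and $a$ bounded. First I would make the natural choice of the exponents $\theta_1,\theta_2$ entering \eqref{annulus}: since there is no time-weight or space-weight to absorb here, I expect to take $\theta_1=\theta_2=1$, so that $E_R=\{(x,t): r(x)+t\le R\}$ and the prefactors $t^{(\theta_1-1)(\cdots)}$ and $r(x)^{(\theta_2-1)p(\cdots)}$ in \eqref{HP21}--\eqref{HP23} are all identically $1$. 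With $V\equiv 1$ the three integrands in \eqref{HP21}--\eqref{HP23} reduce to the constant $1$, so the left-hand sides are just $\mu_a\!\otimes\!\mathrm{Leb}(E_{2R}\setminus E_R)$. Because $a\in L^\infty(\mathbb R^n)$, this is bounded by $C\,\mathrm{Vol}_{\mathrm{eucl}\times t}(E_{2R})\le C R^{n+1}$. The remaining task is purely arithmetic: check that with $\theta_1=\theta_2=1$ the required exponents $\bar s_1=\frac{q}{q-1}$ and $\bar s_3=\frac{pq}{q-p-m+2}$ (from \eqref{s}) satisfy $n+1\le \bar s_1$ and $n+1\le \bar s_3$, i.e. that the Euclidean volume growth is dominated by the exponents the theorem demands. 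The log-powers $s_2,s_4$ may be taken to be $0<\bar s_2$, $0<\bar s_4$ provided $\bar s_2,\bar s_4>0$, which holds since $q>1$ and $q>p+m-2$; and the $C_0\epsilon$ slack is irrelevant because the left-hand integrals do not depend on $\epsilon$ once $V\equiv1$.

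So the heart of the matter is the inequality $n+1\le\min\{\bar s_1,\bar s_3\}$ under the hypothesis $\max(1,p+m-2)<q\le \frac{p}{n}+p+m-2$. I would handle the two bounds separately. For $\bar s_3=\frac{pq}{q-(p+m-2)}$: writing $q_*=p+m-2$ and $q=q_*+\delta$ with $0<\delta\le p/n$, one has $\bar s_3=\frac{p(q_*+\delta)}{\delta}=\frac{pq_*}{\delta}+p\ge \frac{pq_*}{p/n}+p=nq_*+p=n(p+m-2)+p$; since $m\ge1$ this is $\ge n p - n + p = (n+1)p - n\ge n+1$ (using $p>1$), which gives $n+1\le\bar s_3$. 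For $\bar s_1=\frac{q}{q-1}$ this is more delicate, because $\frac{q}{q-1}$ is \emph{decreasing} in $q$ and can be close to $1$ when $q$ is large; here one uses that the hypothesis forces $q$ to be bounded above by $\frac pn + p+m-2$, hence $q-1\le \frac pn + p + m - 3$, and one must verify $\frac{q}{q-1}\ge n+1$, i.e. $q\le \frac{n+1}{n}(q-1)$, i.e. $q\le n+1$. The main obstacle I anticipate is precisely this: checking that $q\le n+1$ is automatic from the stated range, or else realizing that for condition (i) of \textbf{HP}$\mathbf{2}$ one should instead invoke Remark~\ref{thm:rmkepsilon} and the fact that for bounded $a$ and $V\equiv1$ the estimate with $\bar s_1$ can be loosened — alternatively the constant $R^{\bar s_1+C_0\epsilon}$ might need to be read with the $\log$ term doing work, or one simply notes $n+1\le \bar s_3$ already suffices to run the test-function argument after re-examining which of \textbf{HP}$\mathbf{1}$/\textbf{HP}$\mathbf{2}$ is actually needed. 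I would resolve this by tracking carefully which of the two estimates \eqref{HP21} versus \eqref{HP22}--\eqref{HP23} is the binding one in the proof of Theorem~\ref{thm:Thm2}: in the doubly-nonlinear regime $q>p+m-2$ it is the gradient term, governed by $\bar s_3$, that produces the Fujita-type threshold $q_*+\frac pn$, so the condition on $\bar s_1$ should hold with room to spare or be vacuous.

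Once both volume estimates are in place with $\theta_1=\theta_2=1$, Condition \textbf{HP}$\mathbf{2}$ holds, Theorem~\ref{thm:Thm2} applies verbatim, and we conclude $u\equiv 0$ a.e.\ in $\mathbb R^n\times(0,\infty)$, which is the assertion of the corollary. I would also remark that the endpoint $q=\frac pn+p+m-2$ is included precisely because the theorem allows equality (the exponent appears as $R^{\bar s_i}$ with a strictly sub-threshold log power), matching the classical Fujita phenomenon where the critical exponent itself belongs to the nonexistence range.
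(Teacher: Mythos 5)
There is a genuine gap, and you in fact partially detect it yourself in the middle of the argument but do not resolve it. By pinning $\theta_1=\theta_2=1$ at the outset you throw away precisely the freedom that makes the corollary work. With $\theta_1=\theta_2=1$ the weights $t^{(\theta_1-1)(\cdots)}$ and $r(x)^{(\theta_2-1)p(\cdots)}$ vanish and, with $V\equiv 1$ and $a\in L^\infty$, both left-hand sides are of order $R^{n+1}$, so HP$\mathbf{1}$(i) demands $n+1\le\bar s_1=\tfrac{q}{q-1}$, i.e.\ $q\le 1+\tfrac1n$ (not ``$q\le n+1$'' — you dropped a factor), which is in general false under the stated hypothesis. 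Your attempted verification for $\bar s_3$ is also flawed: from $\delta=q-(p+m-2)\le p/n$ you correctly get $\bar s_3\ge n(p+m-2)+p$, but then claim $n(p-1)+p\ge n+1$ ``using $p>1$''; this reads $(n+1)p\ge 2n+1$, i.e.\ $p\ge 2-\tfrac1{n+1}$, which is not implied by $p>1$. So with $\theta_1=\theta_2=1$ neither inequality is guaranteed, and the plan as written does not close.

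The fix — and what the paper actually does — is to keep $\theta_1,\theta_2$ free and use the ratio $\theta_2/\theta_1$ as a tunable space/time scaling parameter. Carrying the extra powers $t^{(\theta_1-1)(\cdots)}$ and $r^{(\theta_2-1)p(\cdots)}$ through the Fubini estimate and comparing to $R^{\bar s_1+C_0\epsilon}$ and $R^{\bar s_3+C_0\epsilon}$ (with $\bar s_1=\tfrac{q}{q-1}\theta_2$, $\bar s_3=\tfrac{pq}{q-p-m+2}\theta_2$), one finds that HP$\mathbf{1}$(i) reduces to $\tfrac{\theta_2}{\theta_1}\ge n(q-1)$ and HP$\mathbf{1}$(ii) to $\tfrac{\theta_2}{\theta_1}\le \tfrac{pq}{q-p-m+2}-n$; such $\theta_1,\theta_2\ge 1$ exist if and only if $n(q-1)\le\tfrac{pq}{q-p-m+2}-n$, which is equivalent to $q\le \tfrac{p}{n}+p+m-2$. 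That is exactly the stated range, and Theorem~\ref{thm:Thm1} (the paper uses HP$\mathbf{1}$, not HP$\mathbf{2}$) then gives the conclusion. Your instinct that the $\bar s_1$ condition would be the sticking point was correct; the missing idea was that the $\theta$'s must be chosen to equalize the two constraints rather than being set to $1$.
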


\begin{rmk}%\label{thm:rmk2}
By Remark~\ref{thm:app1} and Corollary~\ref{thm:cor1}, we conclude for the problems in $\eqref{plapm}-\eqref{f}$: On $(\mathbb{R}^{N},g_{\mathrm{flat}})$ with $N\geq 1$, $a\in \mathrm{Lip}_{\mathrm{loc}}(\mathbb{R}^{N})\cap L^{\infty}(\mathbb{R}^{N}), a>0$ a.e., and $V\equiv1$, we have nonexistence of global nonnegative, nontrivial solutions in the following cases:
\begin{itemize}
    \item[\textbf{(I}$\mathbf{1}$\textbf{)}] if $\max(1,\alpha(p-1))<q\leq\frac{p}{N}+\alpha(p-1)$, so that, as a consequence, $p>\frac{N(\alpha+1)}{\alpha N+1}$.
   \item[\textbf{(I}$\mathbf{2}$\textbf{)}] if $\max(1,p-1)<q\leq\frac{p}{N}+p-1$, so that, as a consequence, $p>\frac{2N}{N+1}$.
     \item[\textbf{(I}$\mathbf{3}$\textbf{)}] if $m<q\leq\frac{2}{N}+m$.%, so that, as a consequence, $m>1-\frac{2}{n}$.
    \item[\textbf{(I}$\mathbf{4}$\textbf{)}] if $1<q\leq \frac{2}{N}+1$.
\end{itemize}
\end{rmk}

\begin{rmk}
As it is clear from the proof, Corollary~\ref{thm:cor1} holds on any complete and non-compact, $n$-dimensional, Riemannian manifold with $\mathrm{Vol}(B_R)\leq C R^{N}$ and all $R$ large enough. Note that $n$ and $N$ need not coincide.
\end{rmk}

We will now investigate the implications of Theorem~\ref{thm:Thm1} and Theorem~\ref{thm:Thm2} for more specific choices of the potential $V$ in $\eqref{main eq}$, namely
\begin{itemize}
    \item the case in which $V$ has separated variables, i.e., $V(x,t)\geq h(x)f(t)$, for some functions $h\colon M\to\mathbb{R}$ and $f\colon(0,\infty)\to\mathbb{R}$;
    \item the case in which $V$ is independent of time;
    \item the case $V\equiv1$.
\end{itemize}

%In the following corollaries, we explore Theorems~\ref{thm:Thm1} and~\ref{thm:Thm2} for potentials $V$ admitting a separation of spatial and temporal components.

\begin{cor}\label{thm:cor2}
Let $M$ be a complete, non-compact Riemannian manifold. Let further $p>1$, $m\geq1, q>\max(p+m-2,1)$, and $a\in\mathrm{Lip}_{\mathrm{loc}}(M), a>0$ a.e. Suppose that the potential $V\in L^1_{\mathrm{loc}}(S)$ satisfies
\begin{equation}\label{cor2vsep}
    V(x,t)\geq  h(x) f(t) \;\;\;\mathrm{for}\;\mathrm{a.e.}\;(x,t)\in M\times(0,\infty),
\end{equation}
for some functions $h\colon M\to\mathbb{R}$ and $f\colon(0,\infty)\to\mathbb{R}$. Further, suppose that $h$ and $f$ satisfy
\begin{equation}\label{cor2fhest}
\begin{split}
    0&\ < h(x)\leq C(1+r(x))^{\alpha_1} \;\;\;\mathrm{for}\;\mathrm{a.e.}\;x\in M,\\
     0&\ < f(t)\leq C(1+t)^{\alpha_2} \;\;\;\mathrm{for}\;\mathrm{a.e.}\;t\in(0,\infty), \\
\end{split}
\end{equation}
and
\begin{equation}\label{cor2hint}
    \int_{B_R}h(x) ^{-\frac{1}{q-1}}\;d\mu_a\leq C R^{\sigma_1}(\log R)^{\delta_1}, \;\;\;\;
     \int_0^T f(t)^{-\frac{1}{q-1}}\;dt\leq C T^{\sigma_2}(\log T)^{\delta_2},
\end{equation}
\begin{equation}\label{cor2fint}
    \int_{B_R}h(x) ^{-\frac{p+m-2}{q-p-m+2}}\;d\mu_a\leq C R^{\sigma_3}(\log R)^{\delta_3},
    \;\;\;\;\int_0^T f(t)^{-\frac{p+m-2}{q-p-m+2}}\;\leq C T^{\sigma_4}(\log T)^{\delta_4}
\end{equation}
for $T,R$ large enough, and with $\alpha_1,\alpha_2,\sigma_1,\sigma_2,\sigma_3,\sigma_4,\delta_1,\delta_2,\delta_3,\delta_4\geq0$ and $C>0$.

Then global nonnegative, nontrivial solutions to Problem $\eqref{main eq}$ do not exist  provided all the following conditions hold:
\begin{itemize}
    \item[(i)] $\delta_1+\delta_2<\frac{1}{q-1}$, $\delta_3+\delta_4<\frac{p+m-2}{q-p-m+2}$.
    \item[(ii)] $0\leq \sigma_2\leq\frac{q}{q-1}$, $0\leq\sigma_3\leq\frac{pq}{q-p-m+2}$.
    \item[(iii)] If $\sigma_2=\frac{q}{q-1}$, we require  $\sigma_1=0$. If $\sigma_3=\frac{pq}{q-p-m+2}$,  we require  $\sigma_4=0$.
    \item[(iv)] $\sigma_1\sigma_4\leq\left( \frac{q}{q-1}-\sigma_2 \right) \left( \frac{pq}{q-p-m+2}-\sigma_3 \right)$.
\end{itemize}
\end{cor}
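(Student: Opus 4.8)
The plan is to derive Corollary~\ref{thm:cor2} from Theorem~\ref{thm:Thm2} by showing that the separated-variable structure \eqref{cor2vsep} together with the one-dimensional integral bounds \eqref{cor2hint}--\eqref{cor2fint} forces Condition \textbf{HP}$\mathbf{2}$ to hold. The first step is to choose the geometric exponents $\theta_1,\theta_2\geq 1$ in the definition of $E_R$ appropriately: since the spatial and temporal integrals scale differently, one wants to pick $\theta_1,\theta_2$ so that the two estimates \eqref{cor2hint} (respectively \eqref{cor2fint}) match up, i.e. so that the product of the $R$-power coming from $h$ and the $T$-power coming from $f$ (with $T\sim R^{\theta_2/\theta_1}$) does not exceed the target exponent $\bar s_1$ (resp. $\bar s_3$). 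Condition (iv), $\sigma_1\sigma_4\leq(\tfrac{q}{q-1}-\sigma_2)(\tfrac{pq}{q-p-m+2}-\sigma_3)$, is precisely the solvability condition for this choice, and conditions (ii)--(iii) guarantee the relevant exponents are nonnegative and that the boundary cases $\sigma_2=\tfrac{q}{q-1}$ or $\sigma_3=\tfrac{pq}{q-p-m+2}$ (where $\theta_1$ or $\theta_2$ would have to be taken to be $1$, forcing $\sigma_1=0$ or $\sigma_4=0$) are handled. So the first paragraph of the proof fixes $\theta_1,\theta_2$ explicitly in terms of $\sigma_1,\sigma_2,\sigma_3,\sigma_4$.

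Next I would estimate the left-hand sides of \eqref{HP21}--\eqref{HP23}. Using $V(x,t)\geq h(x)f(t)$ and the fact that the integrand weights $t^{(\theta_1-1)(\cdot)}$ and $r(x)^{(\theta_2-1)p(\cdot)}$ separate, one bounds each double integral over $E_{2R}\setminus E_R$ by the product of a spatial integral over the ball $B_{CR}$ and a temporal integral over $(0,CR^{\theta_2/\theta_1})$ — here one uses the Remark that on $E_{2R}\setminus E_R$ one has $r(x)\leq CR$ and $t\leq CR^{\theta_2/\theta_1}$, and that the extra polynomial weights in $t$ and $r(x)$ are themselves bounded by powers of $R$. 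The $V^{-1/(q-1)+\epsilon}$ factor is split as $h^{-1/(q-1)+\epsilon}f^{-1/(q-1)+\epsilon}$; then the positive power $+\epsilon$ is absorbed using the upper bounds \eqref{cor2fhest} on $h$ and $f$ (which contribute $R^{C_0\epsilon}$-type factors with $C_0$ depending on $\alpha_1,\alpha_2,\theta_1,\theta_2$), leaving integrals of $h^{-1/(q-1)}$ over $B_{CR}$ and $f^{-1/(q-1)}$ over $(0,CR^{\theta_2/\theta_1})$, which are controlled by \eqref{cor2hint}. The analogous computation with exponent $-\tfrac{p+m-2}{q-p-m+2}\pm\epsilon$ and \eqref{cor2fint} handles \eqref{HP22}--\eqref{HP23}; the $-\epsilon$ case \eqref{HP23} is where one again uses \eqref{cor2fhest} to absorb the negative perturbation into $R^{C_0\epsilon}$.

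It then remains to check that the resulting exponents of $R$ and $\log R$ meet the thresholds in \textbf{HP}$\mathbf{2}$. Collecting terms, the $R$-exponent in the first estimate comes out as $\sigma_1 + \tfrac{\theta_2}{\theta_1}\sigma_2 + (\theta_1-1)\tfrac{q}{q-1} + (\text{lower order from }\theta_2)$, and the choice of $\theta_1,\theta_2$ from the first step, using (ii)--(iv), is exactly what makes this $\leq \bar s_1 = \tfrac{q}{q-1}\theta_2$; similarly for $\bar s_3$ in the spatial estimate. The logarithmic exponents add up to $\delta_1+\delta_2$ and $\delta_3+\delta_4$, which by (i) are strictly less than $\bar s_2=\tfrac{1}{q-1}$ and $\bar s_4 = \tfrac{p+m-2}{q-p-m+2}$ respectively — and here one must be slightly careful that \eqref{HP21} in \textbf{HP}$\mathbf{2}$ asks for the power $\bar s_2$ of $\log R$ while condition (i) gives something strictly smaller, which is fine since $\log(R)^{\delta_1+\delta_2}\leq \log(R)^{\bar s_2}$ for $R$ large. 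Finally one invokes Theorem~\ref{thm:Thm2}. The main obstacle, and the only genuinely delicate bookkeeping, is the first step: verifying that the system of inequalities the exponents must satisfy is solvable for $\theta_1,\theta_2\geq 1$ precisely under hypotheses (ii)--(iv), including the degenerate boundary cases covered by (iii); once $\theta_1,\theta_2$ are pinned down, the rest is the routine Hölder/splitting argument sketched above.
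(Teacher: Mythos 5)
The overall structure of your argument — separate the double integral over the annulus into a spatial integral times a temporal integral, use the upper bounds \eqref{cor2fhest} to absorb the $\pm\epsilon$ perturbation into an $R^{C_0\epsilon}$ factor, then feed in \eqref{cor2hint}--\eqref{cor2fint} and pick $\theta_1,\theta_2$ from (ii)--(iv) — is the right shape and matches the paper. But you invoke the wrong theorem, and this creates a genuine gap.

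You aim to verify \textbf{HP}$\mathbf{2}$ and apply Theorem~\ref{thm:Thm2}. However, Condition \textbf{HP}$\mathbf{2}$ contains a third requirement, \eqref{HP23}, involving $V^{-\frac{p+m-2}{q-p-m+2}-\epsilon}$. To control the extra factor $(hf)^{-\epsilon}$ there you need an \emph{upper} bound on $(hf)^{-1}$ on $E_{2R}\setminus E_R$, i.e.\ a polynomial \emph{lower} bound on $h$ and $f$. But Corollary~\ref{thm:cor2} assumes only the one-sided bounds $0<h(x)\leq C(1+r(x))^{\alpha_1}$, $0<f(t)\leq C(1+t)^{\alpha_2}$ in \eqref{cor2fhest}, which give the wrong direction: they bound $(hf)^{\epsilon}$ from above (useful for the $+\epsilon$ cases \eqref{HP21}--\eqref{HP22}) but say nothing about $(hf)^{-\epsilon}$. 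Your claim that \eqref{cor2fhest} ``absorbs the negative perturbation'' in \eqref{HP23} is therefore false as stated; without lower bounds on $h,f$ this term is simply uncontrolled. This is exactly the distinction between Corollary~\ref{thm:cor2} and Corollary~\ref{thm:cor3}: the latter adds two-sided bounds on $h,f$ and a non-strict log-exponent condition, and is the one the paper derives from Theorem~\ref{thm:Thm2} via \textbf{HP}$\mathbf{2}$. Corollary~\ref{thm:cor2}, with the \emph{strict} inequalities $\delta_1+\delta_2<\frac{1}{q-1}$, $\delta_3+\delta_4<\frac{p+m-2}{q-p-m+2}$, is designed to verify \textbf{HP}$\mathbf{1}$ — where the log exponents $s_2<\bar s_2$, $s_4<\bar s_4$ must be strictly sub-critical and only the $+\epsilon$ integrals appear — and is then concluded by Theorem~\ref{thm:Thm1}. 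Replacing Theorem~\ref{thm:Thm2}/\textbf{HP}$\mathbf{2}$ by Theorem~\ref{thm:Thm1}/\textbf{HP}$\mathbf{1}$ throughout your argument, and dropping the unverifiable step for \eqref{HP23}, yields essentially the paper's proof.
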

\begin{cor}\label{thm:cor3}
Let $M$ be complete, non-compact Riemannian manifold. Let further $p>1,m\geq1,q>\max(p+m-2,1)$, and $a\in\mathrm{Lip}_{\mathrm{loc}}(M), a>0$ a.e. Suppose that the potential $V\in L^1_{\mathrm{loc}}(S)$ satisfies
\begin{equation*}%\label{cor3vsep}
    V(x,t)\geq  f(t) h(x) \;\;\;\mathrm{for}\;\mathrm{a.e.}\;(x,t)\in  M\times(0,\infty),
\end{equation*}
for some functions $h\colon M\to\mathbb{R}$ and $f\colon(0,\infty)\to\mathbb{R}$. Further, suppose that $h$ and $f$ satisfy
\begin{equation*}%\label{cor3fhest}
\begin{split}
    C^{-1}(1+r(x))^{-\alpha_1} &\ \leq h(x)\leq C(1+r(x))^{\alpha_1} \;\;\;\mathrm{for}\;\mathrm{a.e.}\;x\in M,\\
    C^{-1}(1+t)^{-\alpha_2} &\ \leq f(t)\leq C(1+t)^{\alpha_2} \;\;\;\mathrm{for}\;\mathrm{a.e.}\;t\in(0,\infty), \\
\end{split}
\end{equation*}
and
\begin{equation*}%%\label{cor3hint}
    \int_{B_R}h(x) ^{-\frac{1}{q-1}}\;d\mu_a\leq C R^{\sigma_1}(\log R)^{\delta_1}, \;\;\;\;  \int_0^T f(t)^{-\frac{1}{q-1}}\;dt\leq C T^{\sigma_2}(\log T)^{\delta_2},
\end{equation*}
\begin{equation*}%\label{cor3fint}
    \int_{B_R}h(x) ^{-\frac{p+m-2}{q-p-m+2}}\;d\mu_a\leq C R^{\sigma_3}(\log R)^{\delta_3},\;\;\;\;\int_0^T f(t)^{-\frac{p+m-2}{q-p-m+2}}\;dt\leq C T^{\sigma_4}(\log T)^{\delta_4}
\end{equation*}
for $T,R$ large enough, and with $\alpha_1,\alpha_2,\sigma_1,\sigma_2,\sigma_3,\sigma_4,\delta_1,\delta_2,\delta_3,\delta_4\geq0$ and $C>0$.

Then global nonnegative, nontrivial solutions to Problem $\eqref{main eq}$ do not exist provided all the following conditions hold:
\begin{itemize}
    \item[(i)] $\delta_1+\delta_2\leq\frac{1}{q-1}$, $\delta_3+\delta_4\leq\frac{p+m-2}{q-p-m+2}$.
    \item[(ii)] $0\leq \sigma_2\leq\frac{q}{q-1}$, $0\leq\sigma_3\leq\frac{pq}{q-p-m+2}$.
    \item[(iii)] If $\sigma_2=\frac{q}{q-1}$,  we require  $\sigma_1=0$. If $\sigma_3=\frac{pq}{q-p-m+2}$,  we require  $\sigma_4=0$.
    \item[(iv)] $\sigma_1\sigma_4\leq\left( \frac{q}{q-1}-\sigma_2 \right) \left( \frac{pq}{q-p-m+2}-\sigma_3 \right)$.
\end{itemize}
\end{cor}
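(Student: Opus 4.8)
The plan is to derive the statement from Theorem~\ref{thm:Thm2} by showing that the two-sided bounds on $h,f$, together with conditions (i)--(iv), force Condition \textbf{HP}$\mathbf{2}$ to hold for a suitable choice of $\theta_1,\theta_2\geq1$. The argument would run parallel to the one for Corollary~\ref{thm:cor2} (which invokes Theorem~\ref{thm:Thm1} through \textbf{HP}$\mathbf{1}$); the only genuinely new ingredient is the verification of \eqref{HP23}, for which the \emph{lower} bounds on $h$ and $f$ are needed. This is also why Corollary~\ref{thm:cor2} gets away with one-sided bounds, at the cost of the strict form of condition (i) there, whereas here condition (i) may be taken non-strict because \textbf{HP}$\mathbf{2}$ permits the logarithmic exponents $\bar s_2,\bar s_4$ rather than anything strictly smaller.

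The crux is the choice of $\theta_1,\theta_2$. Put $A=\tfrac{q}{q-1}-\sigma_2$ and $B=\tfrac{pq}{q-p-m+2}-\sigma_3$, which are $\geq0$ by (ii). A short case analysis --- according to whether $A$ or $\sigma_4$ vanishes --- shows that (ii), (iii), (iv) guarantee the existence of a number $\lambda>0$ with
\[
\sigma_1\leq\lambda A\qquad\text{and}\qquad\lambda\,\sigma_4\leq B .
\]
Indeed, when $A>0$ and $\sigma_4>0$ any $\lambda\in[\sigma_1/A,\;B/\sigma_4]$ works, and this interval is nonempty exactly because $\sigma_1\sigma_4\leq AB$, which is (iv); when $A=0$, condition (iii) forces $\sigma_1=0$, so the first constraint is void, and when $\sigma_4=0$ the second is void, while (iii) excludes the remaining degenerate subcases. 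Given such a $\lambda$, I would take $\theta_1=1,\ \theta_2=\lambda$ if $\lambda\geq1$ and $\theta_2=1,\ \theta_1=1/\lambda$ if $\lambda<1$, so that $\theta_1,\theta_2\geq1$ and $\theta_2/\theta_1=\lambda$.

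With this choice of $(\theta_1,\theta_2)$ I would verify \eqref{HP21}, \eqref{HP22}, \eqref{HP23}. On $E_{2R}\setminus E_R\subseteq E_{2R}$ one has $r(x)\leq2R$ and $t\leq(2R)^{\theta_2/\theta_1}$, so the polynomial weights in $t$ and in $r(x)$ come out as powers of $R$; moreover, since $p+m-2>0$, for $\epsilon$ small the relevant exponents of $V$ are negative, and hence $V\geq h(x)f(t)$ gives $V^{-\beta\pm\epsilon}\leq h(x)^{-\beta\pm\epsilon}f(t)^{-\beta\pm\epsilon}$ for $\beta\in\{\tfrac1{q-1},\tfrac{p+m-2}{q-p-m+2}\}$. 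Over-estimating the region of integration by $B_{2R}\times(0,(2R)^{\theta_2/\theta_1})$ reduces the double integral to a product of a space integral over $B_{2R}$ and a time integral over $(0,(2R)^{\theta_2/\theta_1})$; writing $h^{-\beta\pm\epsilon}=h^{-\beta}h^{\pm\epsilon}$ and bounding $h^{\epsilon}$ (resp.\ $h^{-\epsilon}$) by $CR^{\alpha_1\epsilon}$ on $B_{2R}$ via the upper (resp.\ lower) bound on $h$, and likewise for $f$, the integral hypotheses yield
\[
\int_{B_{2R}}h^{-\beta\pm\epsilon}\,d\mu_a\leq CR^{\sigma+\alpha_1\epsilon}(\log R)^{\delta},\qquad
\int_0^{(2R)^{\theta_2/\theta_1}}f^{-\beta\pm\epsilon}\,dt\leq CR^{\lambda(\sigma'+\alpha_2\epsilon)}(\log R)^{\delta'},
\]
with $(\sigma,\delta,\sigma',\delta')=(\sigma_1,\delta_1,\sigma_2,\delta_2)$ for $\beta=\tfrac1{q-1}$ and $(\sigma_3,\delta_3,\sigma_4,\delta_4)$ for $\beta=\tfrac{p+m-2}{q-p-m+2}$. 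Collecting the powers, the $R$-exponent produced in \eqref{HP21} equals $(\theta_2-\lambda)\tfrac{q}{q-1}+\sigma_1+\lambda\sigma_2+O(\epsilon)$, which is $\leq\bar s_1+C_0\epsilon$ for $C_0$ large precisely because $\sigma_1\leq\lambda A$; likewise the $R$-exponent in \eqref{HP22} and \eqref{HP23} is $(\theta_2-1)\tfrac{pq}{q-p-m+2}+\sigma_3+\lambda\sigma_4+O(\epsilon)\leq\bar s_3+C_0\epsilon$ because $\lambda\sigma_4\leq B$. The accompanying logarithmic powers are $(\log R)^{\delta_1+\delta_2}$ and $(\log R)^{\delta_3+\delta_4}$: when an $R$-exponent is strictly below $\bar s_1$ (resp.\ $\bar s_3$) the logarithm is harmless, and in the borderline case of equality the conditions $\delta_1+\delta_2\leq\tfrac1{q-1}=\bar s_2$ and $\delta_3+\delta_4\leq\tfrac{p+m-2}{q-p-m+2}=\bar s_4$ in (i) are exactly what makes the estimates take the form demanded by \textbf{HP}$\mathbf{2}$. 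Then Theorem~\ref{thm:Thm2} applies and forces $u\equiv0$.

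I expect the main obstacle to be the bookkeeping in the borderline cases rather than any single estimate: one has to keep $R_0$ in \textbf{HP}$\mathbf{2}$ uniform in $\epsilon$, so that the $C_0\epsilon$ slack in the $R$-exponent cannot be used to swallow an unbounded logarithm when the $R$-exponent already sits at $\bar s_1$ or $\bar s_3$ --- and it is there that the non-strict inequalities in (i) are sharp and that the two-sided bounds (through \eqref{HP23}) are indispensable. One also has to lay out the selection of $\lambda$ so that (ii), (iii), (iv) are visibly the precise solvability conditions for the pair $\sigma_1\leq\lambda A$, $\lambda\sigma_4\leq B$. Everything else reduces to routine manipulation of powers of $R$ and $\log R$.
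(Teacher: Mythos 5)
Your proposal follows essentially the same strategy as the paper: reduce to Theorem~\ref{thm:Thm2} by choosing $\theta_1,\theta_2\geq1$ with $\theta_2/\theta_1=\lambda$ solving $\sigma_1\leq\lambda(\tfrac{q}{q-1}-\sigma_2)$ and $\lambda\sigma_4\leq\tfrac{pq}{q-p-m+2}-\sigma_3$, then split the integrals over $E_{2R}\setminus E_R$ into a time factor and a space factor and feed in the hypotheses on $h,f$ to verify \textbf{HP}$\mathbf{2}$ --- exactly the route the paper takes (it defers to the proof of Corollary~\ref{thm:cor2} with Theorem~\ref{thm:Thm2} in place of Theorem~\ref{thm:Thm1}). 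Your remark that the two-sided bounds on $h,f$ are what makes \eqref{HP23} verifiable, and hence why Corollary~\ref{thm:cor3} can allow equality in (i) where Corollary~\ref{thm:cor2} cannot, is accurate and a bit more explicit than the paper's terse proof.
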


\begin{rmk}
    Note in particular that the hypotheses in Corollary~\ref{thm:cor2} and Corollary~\ref{thm:cor3} allow for the potential $V$ to be independent of $x\in M$ or $t\in(0,\infty)$. We investigate the case of time-independent potentials below.

\end{rmk}
\begin{cor}\label{thm:cor2'}
Let $M$ be complete, non-compact Riemannian manifold. Let further $p>1,m\geq1,q>\max(p+m-2,1)$, and $a\in\mathrm{Lip}_{\mathrm{loc}}(M), a>0$ a.e. Suppose that the potential $V\in L^1_{\mathrm{loc}}(S)$ satisfies
\begin{equation*}%\label{cor2'vsep}
    V(x,t)\geq h(x) \;\;\;\mathrm{for}\;\mathrm{a.e.}\;(x,t)\in  M\times(0,\infty),
\end{equation*}
for some function $h\colon M\to\mathbb{R}$. Further, suppose that $h$ satisfies
\begin{equation*}%\label{cor2'hest}
    0 < h(x)\leq C(1+r(x))^{\alpha} \;\;\;\mathrm{for}\;\mathrm{a.e.}\;x\in M,
\end{equation*}
and
\begin{equation*}%\label{cor2'hint}
    \int_{B_R}h(x) ^{-\frac{1}{q-1}}\;d\mu_a\leq C R^{\sigma_1}(\log R)^{\delta_1}, \;\;\;\;  \int_{B_R}h(x) ^{-\frac{p+m-2}{q-p-m+2}}\;d\mu_a\leq C R^{\sigma_2}(\log R)^{\delta_2}
\end{equation*}
for $T,R$ large enough, and with $\alpha,\sigma_1,\sigma_2,\delta_1,\delta_2\geq0$ and $C>0$.

Then global nonnegative, nontrivial solutions to Problem $\eqref{main eq}$ do not exist provided all the following conditions hold:
    \begin{itemize}
        \item[(i)]  $\delta_1<\frac{1}{q-1}$, $\delta_2<\frac{p+m-2}{q-p-m+2}$,
    \item[(ii)]  $0\leq\sigma_2<\frac{pq}{q-p-m+2}$.
    \item[(iii)] $\sigma_1\leq \frac{1}{q-1} \left( \frac{pq}{q-p-m+2}-\sigma_2 \right)$.
\end{itemize}
\end{cor}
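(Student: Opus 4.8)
\smallskip

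\noindent\emph{Proof proposal.} The plan is to deduce Corollary~\ref{thm:cor2'} from Corollary~\ref{thm:cor2} by specializing the separated potential $V(x,t)\geq h(x)f(t)$ to the time-independent weight $f\equiv 1$. With this choice the pointwise bound $0<f(t)\leq C(1+t)^{\alpha_2}$ holds with $\alpha_2=0$, and the two time integrals appearing in Corollary~\ref{thm:cor2} reduce to $\int_0^T 1\,dt=T$; hence the exponents attached to $f$ in that statement satisfy $\sigma_2=\sigma_4=1$ and $\delta_2=\delta_4=0$, while the exponents $\sigma_1,\sigma_3,\delta_1,\delta_3$ attached to $h$ there are, respectively, the quantities $\sigma_1,\sigma_2,\delta_1,\delta_2$ of the present statement. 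It therefore remains only to check that hypotheses (i)--(iii) of Corollary~\ref{thm:cor2'} imply hypotheses (i)--(iv) of Corollary~\ref{thm:cor2} under this substitution.

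This is a short bookkeeping step. Condition (i) of Corollary~\ref{thm:cor2} reads $\delta_1+\delta_2<\tfrac1{q-1}$ and $\delta_3+\delta_4<\tfrac{p+m-2}{q-p-m+2}$, which, since $\delta_2=\delta_4=0$, are exactly the two inequalities in (i) above. Condition (ii) of Corollary~\ref{thm:cor2} becomes $0\leq 1\leq\tfrac{q}{q-1}$ (true because $q>1$) together with $0\leq\sigma_2\leq\tfrac{pq}{q-p-m+2}$, which is implied by the strict inequality in (ii) above. Condition (iii) of Corollary~\ref{thm:cor2} is vacuous here: the time exponent equals $1\neq\tfrac{q}{q-1}$ always, and the relevant space exponent equals $\sigma_2<\tfrac{pq}{q-p-m+2}$ strictly by (ii). Finally, condition (iv) of Corollary~\ref{thm:cor2} reads $\sigma_1\cdot 1\leq\bigl(\tfrac{q}{q-1}-1\bigr)\bigl(\tfrac{pq}{q-p-m+2}-\sigma_2\bigr)=\tfrac1{q-1}\bigl(\tfrac{pq}{q-p-m+2}-\sigma_2\bigr)$, which is precisely (iii) above. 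Thus Corollary~\ref{thm:cor2} applies and yields the nonexistence claim.

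Alternatively, and equivalently, one can verify Condition \textbf{HP}$\mathbf{1}$ directly and invoke Theorem~\ref{thm:Thm1}. Since $V(x,t)\geq h(x)>0$ and the exponents $-\tfrac1{q-1}$, $-\tfrac{p+m-2}{q-p-m+2}$ are both negative, one has $V^{-1/(q-1)+\epsilon}\leq h^{-1/(q-1)+\epsilon}$ and likewise for the other exponent, for $\epsilon$ small. On $E_{2R}\setminus E_R$ one has $r(x)\leq 2R$ and $t\leq (2R)^{\theta_2/\theta_1}$, so the space--time integrals in \eqref{HP11}--\eqref{HP12} factor: the powers of $r(x)$ and $t$ become powers of $R$, the factor $h^{\epsilon}\leq(CR^{\alpha})^{\epsilon}$ on $B_{2R}$ is absorbed into the $C_0\epsilon$ term, and the remaining $h$-integrals are controlled by the assumed bounds $CR^{\sigma_i}(\log R)^{\delta_i}$. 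Matching exponents one finds that \eqref{HP11} holds if $\theta_2/\theta_1\geq\sigma_1(q-1)$, that \eqref{HP12} holds if $\theta_2/\theta_1\leq\tfrac{pq}{q-p-m+2}-\sigma_2$, and that the logarithmic exponents are admissible precisely because of (i). The only genuine point is that this window for the ratio $\theta_2/\theta_1$ be nonempty and contained in $(0,\infty)$: nonemptiness is exactly (iii), positivity of the upper endpoint is (ii), and any positive value of the ratio is realizable with $\theta_1,\theta_2\geq 1$ (take $(\theta_1,\theta_2)=(1,\lambda)$ if $\lambda\geq1$ and $(1/\lambda,1)$ otherwise). In short there is no real obstacle here; the substance is already contained in Theorems~\ref{thm:Thm1}--\ref{thm:Thm2} and in Corollary~\ref{thm:cor2}, and the proof is a specialization of the latter.
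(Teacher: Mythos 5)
Your proposal is correct and follows the same route as the paper: specialize Corollary~\ref{thm:cor2} to the separated potential with $f\equiv 1$, which forces $\alpha_2=0$, $\sigma_2=\sigma_4=1$, $\delta_2=\delta_4=0$ in the notation of that corollary, and then verify that conditions (i)--(iii) of Corollary~\ref{thm:cor2'} imply (i)--(iv) of Corollary~\ref{thm:cor2}. Your bookkeeping is accurate; in particular you correctly note that the $\delta$'s attached to $f$ are $\delta_2$ and $\delta_4$ (the paper's one-line proof writes ``$\delta_1=\delta_4=0$'', which is a typographical slip for $\delta_2=\delta_4=0$). The alternative argument you sketch --- verifying \textbf{HP}$\mathbf{1}$ directly and invoking Theorem~\ref{thm:Thm1}, with the admissible window for $\theta_2/\theta_1$ being $\bigl[\sigma_1(q-1),\,\tfrac{pq}{q-p-m+2}-\sigma_2\bigr]$ --- is the computation that underlies the proof of Corollary~\ref{thm:cor2} itself, so it is the same content unpacked rather than a genuinely different approach; either presentation is fine.
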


\begin{cor}\label{thm:cor3'}
Let $M$ be complete, non-compact Riemannian manifold. Let further $p>1,m\geq1,q>\max(p+m-2,1)$, and $a\in\mathrm{Lip}_{\mathrm{loc}}(M), a>0$ a.e. Suppose that the potential $V\in L^1_{\mathrm{loc}}(S)$ satisfies
\begin{equation*}%\label{cor3'vsep}
    V(x,t)\geq h(x) \;\;\;\mathrm{for}\;\mathrm{a.e.}\;(x,t)\in  M\times(0,\infty),
\end{equation*}
for some function $h\colon M\to\mathbb{R}$. Further, suppose that $h$ satisfies
\begin{equation*}%\label{cor3'hest}
    C^{-1}(1+r(x))^{-\alpha}  \leq h(x)\leq C(1+r(x))^{\alpha} \;\;\;\mathrm{for}\;\mathrm{a.e.}\;\;\;x\in M,
\end{equation*}
and
\begin{equation*}%\label{cor3'hint}
    \int_{B_R}h(x) ^{-\frac{1}{q-1}}\;d\mu_a\leq C R^{\sigma_1}(\log R)^{\frac{1}{q-1}}, \;\;\;\;  \int_{B_R}h(x) ^{-\frac{p+m-2}{q-p-m+2}}\;d\mu_a\leq C R^{\sigma_2}(\log R)^{\frac{p+m-2}{q-p-m+2}}
\end{equation*}
for $T,R$ large enough, and with $\alpha,\sigma_1,\sigma_2,\delta_1,\delta_2\geq0$ and $C>0$.

Then global nonnegative, nontrivial solutions to Problem $\eqref{main eq}$ do not exist provided all the following conditions hold:
    \begin{itemize}
    \item[(i)]  $0\leq\sigma_2<\frac{pq}{q-p-m+2}$.
    \item[(ii)] $\sigma_1\leq \frac{1}{q-1} \left( \frac{pq}{q-p-m+2}-\sigma_2 \right)$.
\end{itemize}
\end{cor}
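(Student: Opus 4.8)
The plan is to obtain the statement as the time-independent specialization of Corollary~\ref{thm:cor3}. Concretely, I would apply Corollary~\ref{thm:cor3} with the given $h$ and with $f\equiv1$. Write $\tilde\sigma_1,\dots,\tilde\sigma_4$ and $\tilde\delta_1,\dots,\tilde\delta_4$ for the eight growth parameters occurring in Corollary~\ref{thm:cor3}. The choice $f\equiv1$ makes the required two-sided bound on $f$ hold with $\alpha_2=0$, and gives $\int_0^T f(t)^{-\frac{1}{q-1}}\,dt=\int_0^T f(t)^{-\frac{p+m-2}{q-p-m+2}}\,dt=T$, so that $\tilde\sigma_2=\tilde\sigma_4=1$ and $\tilde\delta_2=\tilde\delta_4=0$; moreover the spatial hypotheses assumed here are exactly those of Corollary~\ref{thm:cor3} with $\tilde\sigma_1=\sigma_1$, $\tilde\sigma_3=\sigma_2$, $\tilde\delta_1=\frac{1}{q-1}$, $\tilde\delta_3=\frac{p+m-2}{q-p-m+2}$.

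It then remains to check that conditions (i)--(iv) of Corollary~\ref{thm:cor3} follow from (i)--(ii) of the present statement. Condition~(i) there reads $\tilde\delta_1+\tilde\delta_2\le\frac{1}{q-1}$ and $\tilde\delta_3+\tilde\delta_4\le\frac{p+m-2}{q-p-m+2}$, both holding with equality since $\tilde\delta_2=\tilde\delta_4=0$. Condition~(ii) there requires $0\le\tilde\sigma_2\le\frac{q}{q-1}$, true because $\tilde\sigma_2=1<\frac{q}{q-1}$ as $q>1$, and $0\le\tilde\sigma_3\le\frac{pq}{q-p-m+2}$, true by hypothesis~(i) here. Condition~(iii) there has a vacuous first clause because $\tilde\sigma_2=1\ne\frac{q}{q-1}$, and a vacuous second clause because $\tilde\sigma_3=\sigma_2<\frac{pq}{q-p-m+2}$ by hypothesis~(i) --- this is precisely where the strict inequality in (i) is needed. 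Finally, condition~(iv) there, $\tilde\sigma_1\tilde\sigma_4\le\big(\frac{q}{q-1}-\tilde\sigma_2\big)\big(\frac{pq}{q-p-m+2}-\tilde\sigma_3\big)$, becomes, using $\tilde\sigma_4=\tilde\sigma_2=1$ and $\frac{q}{q-1}-1=\frac{1}{q-1}$, precisely $\sigma_1\le\frac{1}{q-1}\big(\frac{pq}{q-p-m+2}-\sigma_2\big)$, which is hypothesis~(ii). Hence Corollary~\ref{thm:cor3} applies and yields that global nonnegative, nontrivial solutions to Problem~\eqref{main eq} do not exist.

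There is no serious obstacle here: no new analysis is involved, everything being reduced to Corollary~\ref{thm:cor3} (and ultimately to Theorem~\ref{thm:Thm2} via Condition~\textbf{HP}$\mathbf{2}$). The only point requiring a little care is the bookkeeping of which parameters trivialize when $f\equiv1$: one must notice that $\tilde\sigma_2=\tilde\sigma_4=1$ and $\tilde\delta_2=\tilde\delta_4=0$, that the value $\tilde\sigma_4=1\ne0$ forces, through clause~(iii) of Corollary~\ref{thm:cor3}, the strict upper bound $\sigma_2<\frac{pq}{q-p-m+2}$ appearing in hypothesis~(i), and that $\tilde\delta_2=\tilde\delta_4=0$ is what frees the full logarithmic budget $\frac{1}{q-1}$ and $\frac{p+m-2}{q-p-m+2}$ for the spatial integrals.
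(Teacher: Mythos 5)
Your proposal is correct and is essentially the paper's own proof: the paper likewise obtains Corollary~\ref{thm:cor3'} from Corollary~\ref{thm:cor3} by specializing to $f\equiv1$, which forces the time parameters to $\tilde\sigma_2=\tilde\sigma_4=1$, $\tilde\delta_2=\tilde\delta_4=0$ (the paper's printed ``$\delta_1=\delta_4=0$'' appears to be a typo for $\delta_2=\delta_4=0$). Your bookkeeping and the verification that (i)--(iv) of Corollary~\ref{thm:cor3} reduce to (i)--(ii) here, including the role of the strict inequality $\sigma_2<\frac{pq}{q-p-m+2}$ in making clause~(iii) vacuous, are all accurate.
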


Finally, we examine the case $V \equiv 1$ and establish nonexistence results under a specific volume growth condition on geodesic balls with respect to the weighted measure $\mu_a$.
\begin{cor}\label{thm:lastcor'}
Let $M$ be complete, non-compact Riemannian manifold. Let further $p>1,m\geq1,q>\max(p+m-2,1)$, and $a\in\mathrm{Lip}_{\mathrm{loc}}(M), a>0$ a.e. Suppose that the potential satisfies $V\equiv1$ on $S$. Assume that there exists some $C>0$ such that for all $R$ large enough
\begin{equation*}%\label{volgr}
    \mathrm{Vol}_{\mu_a}\left(B_R\right)\leq C R^{\frac{p}{q-p-m+2}} (\log R)^{\delta},
\end{equation*}
where $\delta=\min\left(\frac{1}{q-1},\frac{p+m-2}{q-p-m+2}\right)$. Then global nonnegative, nontrivial solutions to Problem $\eqref{main eq}$ do not exist.
\end{cor}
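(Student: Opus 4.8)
The plan is to deduce Corollary~\ref{thm:lastcor'} from Theorem~\ref{thm:Thm2}, by showing that the assumed volume growth forces Condition \textbf{HP}$\mathbf{2}$ for an appropriate choice of $\theta_1,\theta_2\geq1$. Since $V\equiv1$, all three potential factors in \eqref{HP21}--\eqref{HP23} equal $1$, so by the Remark immediately following \textbf{HP}$\mathbf{2}$ (taking $C_1=0$ there, which is legitimate since $R^{C_1\epsilon}\geq1$) it suffices to check that, for all $R$ large,
\[
(\mu_a\otimes dt)\bigl(E_{2R}\setminus E_R\bigr)\ \leq\ C\,R^{\bar s_1/\theta_1}(\log R)^{\bar s_2}
\qquad\text{and}\qquad
(\mu_a\otimes dt)\bigl(E_{2R}\setminus E_R\bigr)\ \leq\ C\,R^{\bar s_3/\theta_2}(\log R)^{\bar s_4},
\]
with $\bar s_1,\dots,\bar s_4$ as in \eqref{s}, the bound coming from \eqref{HP23} being identical to the second one when $V\equiv1$. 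To estimate the left-hand side I would use the elementary inclusion $E_{2R}\subseteq B_{2R}\times\bigl(0,(2R)^{\theta_2/\theta_1}\bigr]$, immediate from \eqref{annulus}, together with the hypothesis $\mathrm{Vol}_{\mu_a}(B_R)\leq C R^{p/(q-p-m+2)}(\log R)^{\delta}$, to obtain
\[
(\mu_a\otimes dt)\bigl(E_{2R}\setminus E_R\bigr)\ \leq\ \mathrm{Vol}_{\mu_a}(B_{2R})\,(2R)^{\theta_2/\theta_1}\ \leq\ C\,R^{\frac{p}{q-p-m+2}+\frac{\theta_2}{\theta_1}}(\log R)^{\delta}.
\]

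The crux is then the choice of $\theta_1,\theta_2$. A short computation shows that $\frac{p}{q-p-m+2}+\frac{\theta_2}{\theta_1}\leq\frac{\bar s_1}{\theta_1}=\frac{q}{q-1}\frac{\theta_2}{\theta_1}$ precisely when $\frac{\theta_2}{\theta_1}\geq\gamma$, and that $\frac{p}{q-p-m+2}+\frac{\theta_2}{\theta_1}\leq\frac{\bar s_3}{\theta_2}=\frac{pq}{q-p-m+2}$ precisely when $\frac{\theta_2}{\theta_1}\leq\gamma$, where $\gamma\coloneq\frac{p(q-1)}{q-p-m+2}$ (which is $>0$ thanks to $q>p+m-2$). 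Hence one is \emph{forced} to take $\theta_2/\theta_1=\gamma$ — exactly the balance at which the exponent $\frac{p}{q-p-m+2}$ in the volume hypothesis becomes critical. A legitimate pair with $\theta_1,\theta_2\geq1$ always exists: take $(\theta_1,\theta_2)=(1,\gamma)$ when $\gamma\geq1$, and $(\theta_1,\theta_2)=(1/\gamma,1)$ when $\gamma<1$. With this choice one checks that $\frac{p}{q-p-m+2}+\frac{\theta_2}{\theta_1}=\frac{pq}{q-p-m+2}=\frac{\bar s_1}{\theta_1}=\frac{\bar s_3}{\theta_2}$, so both required power exponents are matched with equality; and since $\delta=\min(\bar s_2,\bar s_4)$ we have $\delta\leq\bar s_2$ and $\delta\leq\bar s_4$, so $(\log R)^{\delta}\leq(\log R)^{\bar s_2}$ in the first estimate and $(\log R)^{\delta}\leq(\log R)^{\bar s_4}$ in the second (for $R$ large). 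Therefore Condition \textbf{HP}$\mathbf{2}$ holds and Theorem~\ref{thm:Thm2} yields $u=0$ a.e. in $M\times(0,\infty)$.

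The argument is essentially bookkeeping of exponents; the one genuinely delicate point is noticing that the ``time'' estimate \eqref{HP21} and the ``space'' estimate \eqref{HP22}--\eqref{HP23} push the ratio $\theta_2/\theta_1$ in opposite directions, so that its value $\gamma$ — and hence the sharp volume exponent $\frac{p}{q-p-m+2}$ — is uniquely determined, with no slack. One must also respect the constraint $\theta_i\geq1$ from \eqref{annulus}, which is why the cases $\gamma\geq1$ and $\gamma<1$ are separated. Finally, Condition \textbf{HP}$\mathbf{1}$ is not usable here, since it requires the logarithmic exponents to be \emph{strictly} less than $\bar s_2$ and $\bar s_4$, whereas for $\delta=\min(\bar s_2,\bar s_4)$ equality is attained in at least one of them; this is precisely why the corollary rests on Theorem~\ref{thm:Thm2}.
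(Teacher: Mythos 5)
Your proof is correct, and the underlying exponent bookkeeping is identical to what the paper does, but you take a more direct route. The paper obtains Corollary~\ref{thm:lastcor'} by specializing a chain of earlier results: it sets $h\equiv1$ in Corollary~\ref{thm:cor3'}, which is itself the case $f\equiv1$ of Corollary~\ref{thm:cor3}, which in turn is proved by the same test-function/exponent argument as Corollary~\ref{thm:cor2} but appealing to Theorem~\ref{thm:Thm2} instead of Theorem~\ref{thm:Thm1}. If you unwind that chain, the constraint on $\theta_2/\theta_1$ in the proof of Corollary~\ref{thm:cor2} becomes exactly your two opposing inequalities, pinning $\theta_2/\theta_1 = p(q-1)/(q-p-m+2)$, and condition (iv) there degenerates to equality — precisely the ``no slack'' phenomenon you point out. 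Your version skips the intermediate corollaries and verifies \textbf{HP}$\mathbf{2}$ directly from the Remark following it, which is cleaner for this special case. You also make explicit two things the paper leaves implicit: (a) that a pair $\theta_1,\theta_2\geq1$ realizing the forced ratio always exists (the $\gamma\gtrless1$ dichotomy), and (b) why \textbf{HP}$\mathbf{1}$ and Theorem~\ref{thm:Thm1} cannot be used here — because the logarithmic exponent $\delta=\min(\bar s_2,\bar s_4)$ saturates one of the strict inequalities required by \textbf{HP}$\mathbf{1}$. Both are worthwhile clarifications. One small cosmetic remark: your parenthetical about ``taking $C_1=0$'' is phrased a bit loosely — the real point is simply that the sufficient conditions in the Remark are stated ``for some $C_1\geq0$'', and $C_1=0$ is the strongest (hence sufficient) choice; since your left-hand side is $\epsilon$-independent, you lose nothing.
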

\begin{rmk}
    We explicitly observe that Corollary \ref{thm:lastcor'} yields nonexistence of global nonnegative, nontrivial solutions of Problem \eqref{plapm} when $V\equiv1$ if $p>1$, $\alpha\geq1$, $q>\max\{\alpha(p-1),1\}$ and $$\mathrm{Vol}\left(B_R\right)\leq C R^{\frac{p}{q-\alpha(p-1)}} (\log R)^{\delta},$$
where $\delta=\min\left(\frac{1}{q-1},\frac{\alpha(p-1)}{q-\alpha(p-1)}\right)$.
\end{rmk}

The rest of the paper is organized as follows. In Section 2 we provide the proofs of the main results, Theorems \ref{thm:Thm1} and ~\ref{thm:Thm2}, which rely on a careful test function argument and on a priori integral estimates. Section 3 is devoted to the proof of the ensuing corollaries.

\section{Proof of Theorems ~\ref{thm:Thm1} and ~\ref{thm:Thm2}}

\subsection{Proof of Theorem~\ref{thm:Thm1}}

We introduce the function $\chi_{\beta}\colon (0,\infty) \to [0,1]$ for $\beta>0$:
\begin{equation}\label{chib}
    t\mapsto\begin{cases}
       0 &\quad\text{if }t\leq\beta\\
   \frac{t}{\beta}-1 &\quad\text{if }\beta\leq t\leq 2\beta \\
    1 &\quad\text{if }2\beta\leq t.\\
     \end{cases}  \\
\end{equation}

To proceed with the proof, we need the following intermediate result.

\begin{lem}\label{thm:Lem1}
Let $p>1$, $m\geq1$ and $q>\max(p+m-2, 1)$. Let further \[s\geq \mathrm{max}\left(1, \frac{pq}{q-p-m+2},\frac{q}{q-1}\right)\] be fixed and let $\chi_{\beta}$ be as in $\eqref{chib}$. Let $u$ be a nonnegative weak solution $u$ of Problem $\eqref{main eq}$. Then there exists a constant $C>0$ (depending only on $p,q,m,K$ and $s$) such that, for all $\alpha \in \big(
-\frac{1}{2} \mathrm{min}\{1,p-1,p+m-2\},0 \big)$ and all $\phi\in \mathrm{Lip}(M\times[0,\infty)), 0\leq\phi\leq1$ with compact support, one has
\begin{equation}\label{Lem1res}
\begin{split}
    \int_0^{\infty}\int_M u^{q+\alpha} \phi^s &\ \chi_{\beta} V \;d\mu_a dt
    + %\frac{3}{4}
    |\alpha| \int_0^{\infty}\int_M |\nabla u|^p  u^{m+\alpha-2} \phi^s\chi_{\beta} f(|\nabla u|)  \;d\mu_a dt \\
    &\ \leq C \Bigg\{ |\alpha|^{-\frac{q(p-1)}{q-p-m+2}} \int_0^{\infty}\int_M   |\nabla\phi|^{p\frac{q+\alpha}{q-p-m+2}} V^{-\frac{p+m-2+\alpha}{q-p-m+2}} \;d\mu_a dt \\
    &\  \;\;\;\;\;\;\;\;\;\;\; +\int_0^{\infty}\int_M |\partial_t\phi|^{\frac{q+\alpha}{q-1}} V^{-\frac{\alpha+1}{q-1}} \;d\mu_a dt  \Bigg\}.
\end{split}
\end{equation}
Here and in the following, we adopt the convention that $\nabla u=0$ on level sets of $u$.
\begin{proof}
For any $\epsilon>0$, let $u_{\epsilon}\coloneq u+\epsilon$. Then $\psi\coloneq u_{\epsilon}^{\alpha}\phi^s\chi_{\beta}$ is bounded and inherits the regularity properties from $u$. We can approximate $\psi$ by a sequence of test functions in the sense of Definition ~\ref{thm:Defn}. By $\eqref{equivdefn}$ and a limiting argument, we have
\begin{equation*}
\begin{split}
    \int_0^{\infty}\int_M u^q &\ u_{\epsilon}^{\alpha}\phi^s\chi_{\beta} V \;d\mu_a dt \\
     \leq &\ \alpha \int_0^{\infty}\int_M |\nabla u|^p u_{\epsilon}^{\alpha-1}\phi^s\chi_{\beta} u^{m-1} f(|\nabla u|) \;d\mu_a dt \\
    &\  + s \int_0^{\infty}\int_M \langle \nabla \phi, \nabla u \rangle  u^{m-1} |\nabla u|^{p-2} u_{\epsilon}^{\alpha}\phi^{s-1}\chi_{\beta} f(|\nabla u|)\;d\mu_a dt\\
    &\  - \alpha \int_0^{\infty}\int_M  u_{\epsilon}^{\alpha-1}\phi^s\chi_{\beta} (\partial_t u)u \;d\mu_a dt \\
    &\  - s \int_0^{\infty}\int_M u_{\epsilon}^{\alpha}\phi^{s-1}\chi_{\beta} (\partial_t \phi)u \;d\mu_a dt \\
   &\  - \int_0^{\infty}\int_M u_{\epsilon}^{\alpha}\phi^s\chi_{\beta}' u \;d\mu_a dt. \\
\end{split}
\end{equation*}
Since all terms in the first integral on the right-hand side are positive and $\alpha$ is negative, this is equivalent to
\begin{equation}\label{Lem11}
\begin{split}
    \int_0^{\infty}\int_M u^q &\ u_{\epsilon}^{\alpha}\phi^s\chi_{\beta} V \;d\mu_a dt
    + |\alpha| \int_0^{\infty}\int_M |\nabla u|^p u_{\epsilon}^{\alpha-1}\phi^s\chi_{\beta} u^{m-1} f(|\nabla u|) \;d\mu_a dt \\
    &\ \leq s \int_0^{\infty}\int_M \langle \nabla \phi, \nabla u \rangle  u^{m-1} |\nabla u|^{p-2} u_{\epsilon}^{\alpha}\phi^{s-1}\chi_{\beta} f(|\nabla u|)\;d\mu_a dt\\
    &\ \;\;\;\; + I,
\end{split}
\end{equation}
where we set
\begin{equation}\label{I}
\begin{split}
    I  \coloneq &\ -\alpha \int_0^{\infty}\int_M  u_{\epsilon}^{\alpha-1}\phi^s\chi_{\beta} (\partial_t u)u \;d\mu_a dt
    - s \int_0^{\infty}\int_M u_{\epsilon}^{\alpha}\phi^{s-1}\chi_{\beta} (\partial_t \phi)u \;d\mu_a dt \\
     &\ - \int_0^{\infty}\int_M u_{\epsilon}^{\alpha}\phi^s\chi_{\beta}' u \;d\mu_a dt.\\
\end{split}
\end{equation}
Using $u=u_{\epsilon}-\epsilon$, we can rewrite the first integral of $I$ as follows:
\begin{equation*}
\begin{split}
   - \alpha \int_0^{\infty}\int_M  u_{\epsilon}^{\alpha-1}\phi^s\chi_{\beta} (\partial_t u)u \;d\mu_a dt
     = &\ -\alpha \int_0^{\infty}\int_M u_{\epsilon}^{\alpha}\phi^s\chi_{\beta} (\partial_t u) \;d\mu_a dt \\
    &\ +\alpha \epsilon \int_0^{\infty}\int_M u_{\epsilon}^{\alpha-1}\phi^s\chi_{\beta} (\partial_t u) \;d\mu_a dt \\
    = &\ -\frac{\alpha}{\alpha+1} \int_0^{\infty}\int_M \partial_t (u_{\epsilon}^{\alpha+1})\phi^s\chi_{\beta}  \;d\mu_a dt \\
    &\ + \epsilon \int_0^{\infty}\int_M \partial_t (u_{\epsilon}^{\alpha})\phi^s\chi_{\beta}  \;d\mu_a dt. \\
\end{split}
\end{equation*}
Integrating by parts, yields
\begin{equation*}
\begin{split}
    - \alpha \int_0^{\infty}\int_M  u_{\epsilon}^{\alpha-1}\phi^s\chi_{\beta} (\partial_t u)u \;d\mu_a dt
   = &\ \frac{\alpha}{\alpha+1} \int_0^{\infty}\int_M u_{\epsilon}^{\alpha+1}\partial_t(\phi^s\chi_{\beta} ) \;d\mu_a dt \\
    &\ - \epsilon \int_0^{\infty}\int_M u_{\epsilon}^{\alpha}\partial_t(\phi^s\chi_{\beta}) \;d\mu_a dt \\
   = &\ s \frac{\alpha}{\alpha+1} \int_0^{\infty}\int_M u_{\epsilon}^{\alpha+1}\phi^{s-1}(\partial_t\phi)\chi_{\beta} \;d\mu_a dt \\
   &\ + \frac{\alpha}{\alpha+1} \int_0^{\infty}\int_M u_{\epsilon}^{\alpha+1}\phi^s\chi_{\beta}' \;d\mu_a dt \\
    &\ - s\epsilon \int_0^{\infty}\int_M u_{\epsilon}^{\alpha}\phi^{s-1}(\partial_t\phi)\chi_{\beta} \;d\mu_a dt \\
    &\ - \epsilon \int_0^{\infty}\int_M u_{\epsilon}^{\alpha}\phi^s\chi_{\beta}' \;d\mu_a dt \\
\end{split}
\end{equation*}
Thus, again using $u=u_{\epsilon}-\epsilon$, in the last two terms of $I$ in $\eqref{I}$, $I$ becomes
\begin{equation*}
\begin{split}
    I = &\ -\frac{s}{\alpha+1} \int_0^{\infty}\int_M u_{\epsilon}^{\alpha+1} \phi^{s-1} (\partial_t\phi) \chi_{\beta}\;d\mu_a dt \\
    &\ -\frac{1}{\alpha+1} \int_0^{\infty}\int_M u_{\epsilon}^{\alpha+1} \phi^s \chi_{\beta}'\;d\mu_a dt. \\
\end{split}
\end{equation*}
Inserting this into $\eqref{Lem11}$, we arrive at
\begin{equation*}
\begin{split}
    \int_0^{\infty}\int_M u^q &\ u_{\epsilon}^{\alpha}\phi^s\chi_{\beta} V \;d\mu_a dt
    + |\alpha| \int_0^{\infty}\int_M |\nabla u|^p u_{\epsilon}^{\alpha-1}\phi^s\chi_{\beta} u^{m-1} f(|\nabla u|) \;d\mu_a dt \\
   &\ \;\;\;\; + \frac{1}{\alpha+1} \int_0^{\infty}\int_M u_{\epsilon}^{\alpha+1} \phi^s \chi_{\beta}'\;d\mu_a dt\\
    &\ \leq s \int_0^{\infty}\int_M \langle \nabla \phi, \nabla u \rangle  u^{m-1} |\nabla u|^{p-2} u_{\epsilon}^{\alpha}\phi^{s-1}\chi_{\beta} f(|\nabla u|)\;d\mu_a dt\\
    &\ \;\;\;\; -\frac{s}{\alpha+1} \int_0^{\infty}\int_M u_{\epsilon}^{\alpha+1} \phi^{s-1} (\partial_t\phi) \chi_{\beta}\;d\mu_a dt. \\
\end{split}
\end{equation*}
In particular, since the last term on the left-hand side is nonnegative for all $\alpha>-\frac{1}{2}$,
\begin{equation}\label{Lem12}
\begin{split}
    \int_0^{\infty}\int_M u^q &\ u_{\epsilon}^{\alpha}\phi^s\chi_{\beta} V \;d\mu_a dt
    + |\alpha| \int_0^{\infty}\int_M |\nabla u|^p u_{\epsilon}^{\alpha-1}\phi^s\chi_{\beta} u^{m-1} f(|\nabla u|) \;d\mu_a dt \\
    &\ \leq s \int_0^{\infty}\int_M \langle \nabla \phi, \nabla u \rangle  u^{m-1} |\nabla u|^{p-2} u_{\epsilon}^{\alpha}\phi^{s-1}\chi_{\beta} f(|\nabla u|)\;d\mu_a dt\\
    &\ \;\;\;\; -\frac{s}{\alpha+1} \int_0^{\infty}\int_M u_{\epsilon}^{\alpha+1} \phi^{s-1} (\partial_t\phi) \chi_{\beta}\;d\mu_a dt. \\
\end{split}
\end{equation}
Now we apply Young's Inequality with $\frac{p}{p-1}$ and $p$ to the first term on the right-hand side in $\eqref{Lem12}$ as follows:
\begin{equation*}
    \begin{split}
         s \int_0^{\infty}\int_M &\ \langle \nabla \phi, \nabla u \rangle  u^{m-1} |\nabla u|^{p-2} u_{\epsilon}^{\alpha}\phi^{s-1}\chi_{\beta} f(|\nabla u|)\;d\mu_a dt\\
         &\ \leq  s \int_0^{\infty}\int_M   |\nabla \phi| |\nabla u|^{p-1}   u^{m-1}  u_{\epsilon}^{\alpha}\phi^{s-1}\chi_{\beta} f(|\nabla u|)\;d\mu_a dt\\
         &\ =  \int_0^{\infty}\int_M \Bigg\{ \Big( \frac{p|\alpha|}{4(p-1)} \Big)^{\frac{p-1}{p}} |\nabla u|^{p-1} u_{\epsilon}^{(\alpha-1)\frac{p-1}{p}} u^{(m-1)\frac{p-1}{p}}\phi^{s\frac{p-1}{p}} \chi_{\beta}^{\frac{p-1}{p}} f(|\nabla u|)^{\frac{p-1}{p}} \Bigg\}\\
         &\ \;\;\;\; \times \Bigg\{ s \Big( \frac{p|\alpha|}{4(p-1)} \Big)^{-\frac{p-1}{p}} |\nabla\phi| u_{\epsilon}^{1+\frac{\alpha-1}{p}}u^{(m-1)\frac{1}{p}}\phi^{\frac{s}{p}-1} \chi_{\beta}^{\frac{1}{p}} f(|\nabla u|)^{\frac{1}{p}}  \Bigg\} \;d\mu_a dt \\
         &\ \leq \frac{|\alpha|}{4} \int_0^{\infty}\int_M |\nabla u|^p u_{\epsilon}^{\alpha-1}\phi^s\chi_{\beta} u^{m-1} f(|\nabla u|) \;d\mu_a dt \\
         &\ \;\;\;\; +  K  \frac{s}{p} \Big( \frac{4s(p-1)}{p|\alpha|} \Big)^{p-1} \int_0^{\infty}\int_M |\nabla\phi|^pu_{\epsilon}^{p+\alpha-1}u^{m-1}\phi^{s-p} \chi_{\beta}  \;d\mu_a dt, \\
    \end{split}
\end{equation*}
where $K$ is as in the statements of Theorems \ref{thm:Thm1}, \ref{thm:Thm2}, so that $0\le f\le K$.  Since
\begin{equation*}
    K\frac{s}{p} \Big( \frac{4s(p-1)}{p|\alpha|} \Big)^{p-1} \leq C |\alpha|^{-(p-1)},
\end{equation*}
where $C$ is a constant depending on $s, K, p$.   We see from Inequality $\eqref{Lem12}$ that
\begin{equation}\label{Lem13}
\begin{split}
    \int_0^{\infty}\int_M u^q &\ u_{\epsilon}^{\alpha}\phi^s\chi_{\beta} V \;d\mu_a dt
    + \frac{3}{4}|\alpha| \int_0^{\infty}\int_M |\nabla u|^p u_{\epsilon}^{\alpha-1}\phi^s\chi_{\beta} u^{m-1} f(|\nabla u|) \;d\mu_a dt \\
    &\ \leq C |\alpha|^{-(p-1)} \int_0^{\infty}\int_M
|\nabla\phi|^p u_{\epsilon}^{p+\alpha-1}u^{m-1}\phi^{s-p} \chi_{\beta}  \;d\mu_a dt \\
    &\ -\frac{s}{\alpha+1} \int_0^{\infty}\int_M u_{\epsilon}^{\alpha+1} \phi^{s-1} (\partial_t\phi) \chi_{\beta}\;d\mu_a dt. \\
\end{split}
\end{equation}
In order to estimate the first integral on the right-hand side in $\eqref{Lem13}$, we make use of Young's Inequality again. This time with the following exponents:
\begin{equation}\label{Lem1b}
   b\coloneq \frac{q+\alpha}{p+\alpha+m-2}\;\;\;\;\;\mathrm{and}\;\;\;\;\;b'=\frac{b}{b-1}=\frac{q+\alpha}{q-p-m+2}.
\end{equation}
Note that this is well-defined for $q>p+m-2$ and $\alpha > -\frac{1}{2}(p+m-2)$. We have
\begin{equation}\label{Lem14}
\begin{split}
    C |\alpha|^{-(p-1)} &\ \int_0^{\infty}\int_M  |\nabla\phi|^p u_{\epsilon}^{p+\alpha-1}u^{m-1}\phi^{s-p} \chi_{\beta}   \;d\mu_a dt \\
   &\ = \int_0^{\infty}\int_M \Bigg\{ \Big( \frac{b}{4} \Big)^{\frac{1}{b}} u_{\epsilon}^{p+\alpha-1} u^{m-1} \phi^{\frac{s}{b}} \chi_{\beta}^{\frac{1}{b}} V^{\frac{1}{b}}\Bigg\} \\
    &\ \;\;\;\; \times \Bigg\{ C \Big( \frac{b}{4} \Big)^{-\frac{1}{b}}|\alpha|^{-(p-1)}
|\nabla\phi|^p \phi^{\frac{s}{b'}-p} \chi_{\beta}^{\frac{1}{b'}}    V^{-\frac{1}{b}} \Bigg\} \;d\mu_a dt \\
    &\ \leq \frac{1}{4} \int_0^{\infty}\int_M u_{\epsilon}^{(p+\alpha-1)b} u^{(m-1)b} \phi^s\chi_{\beta} V  \;d\mu_a dt \\
    &\ \;\;\;\; + \frac{1}{b'}C^{b'}  \Big( \frac{b}{4} \Big)^{-\frac{b'}{b}} |\alpha|^{-(p-1)b'} \int_0^{\infty}\int_M |\nabla\phi|^{pb'} \phi^{s-pb'} \chi_{\beta}  V^{-\frac{b'}{b}}  \;d\mu_a dt. \\
\end{split}
\end{equation}
Observe that, by the assumptions on $\alpha$ in the statement of this Lemma,
\begin{equation*}
    \frac{1}{b'}C^{b'} \Big( \frac{b}{4} \Big)^{-\frac{b'}{b}} |\alpha|^{-(p-1)b'} \leq C|\alpha|^{-\frac{(p-1)(q+\alpha)}{q-p-m+2}}\leq C |\alpha|^{-\frac{(p-1)q}{q-p-m+2}},
\end{equation*}
where $C$ is another suitable constant depending on $K, s, p, m, q$.
Inserting this and the values for $b$ and $b'$ in $\eqref{Lem1b}$ into $\eqref{Lem14}$, and the resulting estimate into $\eqref{Lem13}$, yields
\begin{equation}\label{Lem15}
\begin{split}
   \int_0^{\infty} &\ \int_M u^q  u_{\epsilon}^{\alpha}\phi^s\chi_{\beta} V \;d\mu_a dt
   -  \frac{1}{4} \int_0^{\infty}\int_M u_{\epsilon}^{\frac{(p+\alpha-1)(q+\alpha)}{p+\alpha+m-2}}
   u^{\frac{(m-1)(q+\alpha)}{p+\alpha+m-2}} \phi^s \chi_{\beta} V  \;d\mu_a dt \\
  &\ \;\;\;\; + \frac{3}{4}|\alpha| \int_0^{\infty}\int_M |\nabla u|^p u_{\epsilon}^{\alpha-1}\phi^s\chi_{\beta} u^{m-1} f(|\nabla u|) \;d\mu_a dt\\
   &\ \leq C |\alpha|^{-\frac{(p-1)q}{q-p-m+2}}
   \int_0^{\infty}\int_M |\nabla\phi|^{p\frac{q+\alpha}{q-p-m+2}} \phi^{s-p\frac{q+\alpha}{q-p-m+2}} \chi_{\beta}
    V^{-\frac{p+\alpha+m-2}{q-p-m+2}}  \;d\mu_a dt \\
   &\ \;\;\;\; -\frac{s}{\alpha+1} \int_0^{\infty}\int_M u_{\epsilon}^{\alpha+1} \phi^{s-1} (\partial_t\phi)
   \chi_{\beta} \;d\mu_a dt. \\
\end{split}
\end{equation}
We apply Young's Inequality once more to the second integral on the left-hand side in $\eqref{Lem15}$ with
\begin{equation}\label{Lem1c}
    c\coloneq\frac{q+\alpha}{\alpha+1}\;\;\;\;\;\mathrm{and}\;\;\;\;\;c'=\frac{c}{c-1}=\frac{q+\alpha}{q-1}.
\end{equation}
These exponents are well-defined since $q>1$ and $\alpha>-1/2$. Then
\begin{equation}\label{Lem16}
\begin{split}
    -\frac{s}{\alpha+1} \int_0^{\infty} &\ \int_M   u_{\epsilon}^{\alpha+1} \phi^{s-1}
    (\partial_t \phi) \chi_{\beta} \;d\mu_a dt \\
    &\ \leq  \int_0^{\infty}\int_M \Bigg\{ \Big( \frac{c}{4} \Big)^{\frac{1}{c}}
    u_{\epsilon}^{\alpha+1} \phi^{\frac{s}{c}} \chi_{\beta}^{\frac{1}{c}} V^{\frac{1}{c}} \Bigg\}\\
    &\ \quad \;\;\;\;\;\;\;\;\;\;\;\;\;\;\;  \times \Bigg\{ \frac{s}{\alpha+1}\Big( \frac{c}{4} \Big)^{-\frac{1}{c}}
    \phi^{\frac{s}{c'} - 1} \chi_{\beta}^{\frac{1}{c'}}  |\partial_t \phi| V^{-\frac{1}{c}}  \Bigg\}\;d\mu_a dt\\
    &\ \leq \frac{1}{4} \int_0^{\infty}\int_M u_{\epsilon}^{q+\alpha} \phi^s
    \chi_{\beta} V \;d\mu_a dt \\
    &\ \quad + \frac{1}{c'} \Big( \frac{s}{\alpha+1} \Big)^{c'}
    \Big( \frac{c}{4}  \Big)^{-\frac{c'}{c}}
    \int_0^{\infty}\int_M \phi^{s-c'} \chi_{\beta}  |\partial_t \phi|^{c'} V^{-\frac{c'}{c}} \;d\mu_a dt. \\
\end{split}
\end{equation}
By the running assumptions on $\alpha$, it follows that $\alpha>-1/2$. This is enough to guarantee that $c, c'$ are bounded and bounded away from zero, for any fixed $q$. Hence, for a suitable constant $C$: %Observe that, by the assumptions on $\alpha$ in the statement of this Lemma,
\begin{equation}\label{Lem1cest}
    \frac{1}{c'}\Big(\frac{s}{\alpha+1}\Big)^{c'}\Big( \frac{1}{4} c \Big)^{-\frac{c'}{c}} \leq C.
\end{equation}
Inserting $\eqref{Lem16}, \eqref{Lem1cest}$ and definition of $c, c'$ in $\eqref{Lem1c}$ into $\eqref{Lem15}$ yields
\begin{equation*}
\begin{split}
    \int_0^{\infty}\int_M u^q &\ u_{\epsilon}^{\alpha} \phi^s \chi_{\beta} V \;d\mu_a dt  -  \frac{1}{4} \int_0^{\infty}\int_M u_{\epsilon}^{\frac{(p+\alpha-1)(q+\alpha)}{p+\alpha+m-2}}
    u^{\frac{(m-1)(q+\alpha)}{p+\alpha+m-2}} \phi^s \chi_{\beta} V \;d\mu_a dt \\
    &\ \;\;\;\; - \frac{1}{4} \int_0^{\infty}\int_M u_{\epsilon}^{q+\alpha}
    \phi^s \chi_{\beta} V \;d\mu_a dt \\
    &\ \;\;\;\; + \frac{3}{4} |\alpha| \int_0^{\infty}\int_M |\nabla u|^p u_{\epsilon}^{\alpha-1}
    \phi^s \chi_{\beta} u^{m-1} f(|\nabla u|) \;d\mu_a dt \\
    &\ \leq C |\alpha|^{-\frac{(p-1)q}{q-p-m+2}} \\
    &\ \;\;\;\; \times\int_0^{\infty}\int_M |\nabla \phi|^{p \frac{q+\alpha}{q-p-m+2}} \phi^{s-p\frac{q+\alpha}{q-p-m+2}} \chi_{\beta} V^{-\frac{p+\alpha+m-2}{q-p-m+2}} \;d\mu_a dt \\
    &\ \;\;\;\; + C \int_0^{\infty}\int_M \phi^{s-\frac{q+\alpha}{q-1}}
    \chi_{\beta} |\partial_t \phi|^{\frac{q+\alpha}{q-1}} V^{-\frac{\alpha+1}{q-1}} \;d\mu_a dt. \\
\end{split}
\end{equation*}
In particular, by the choice $s\geq \mathrm{max}\big(1, \frac{pq}{q-p-m+2},\frac{q}{q-1}\big)$, and since $0\leq\phi,\chi_{\beta}\leq1$, this implies that
\begin{equation}\label{Lem1ffin}
\begin{split}
    \int_0^{\infty}\int_M u^q &\ u_{\epsilon}^{\alpha} \phi^s \chi_{\beta} V \;d\mu_a dt
    - \frac{1}{4} \int_0^{\infty}\int_M u_{\epsilon}^{\frac{(p+\alpha-1)(q+\alpha)}{p+\alpha+m-2}}
    u^{\frac{(m-1)(q+\alpha)}{p+\alpha+m-2}} \phi^s \chi_{\beta} V \;d\mu_a dt \\
    &\ \quad- \frac{1}{4} \int_0^{\infty}\int_M u_{\epsilon}^{q+\alpha}
    \phi^s \chi_{\beta} V \;d\mu_a dt \\
    &\ \quad + \frac{3}{4} |\alpha| \int_0^{\infty}\int_M |\nabla u|^p u_{\epsilon}^{\alpha-1}
    \phi^s \chi_{\beta} u^{m-1} f(|\nabla u|) \;d\mu_a dt \\
    &\ \leq C |\alpha|^{-\frac{(p-1)q}{q-p-m+2}}
    \int_0^{\infty}\int_M
    |\nabla \phi|^{p \frac{q+\alpha}{q-p-m+2}} V^{-\frac{p+\alpha+m-2}{q-p-m+2}} \;d\mu_a dt \\
    &\ \quad + C \int_0^{\infty}\int_M |\partial_t \phi|^{\frac{q+\alpha}{q-1}}
    V^{-\frac{\alpha+1}{q-1}} \;d\mu_a dt. \\
\end{split}
\end{equation}
Finally, we let $\epsilon \rightarrow 0$ and use Fatou's Lemma, which yields %$\eqref{Lem1res}$ as claimed:
\begin{equation*}
\begin{split}
   \frac{1}{2} \int_0^{\infty}&\ \int_M   u^{q+\alpha} \phi^s\chi_{\beta} V \;d\mu_a dt + \frac{3}{4}|\alpha| \int_0^{\infty}\int_M |\nabla u|^p u^{m+\alpha-2}\phi^s\chi_{\beta} f(|\nabla u|) \;d\mu_a dt \\
    &\ \leq C \Bigg\{ |\alpha|^{-\frac{(p-1)q}{q-p-m+2}} \int_0^{\infty}\int_M |\nabla\phi|^{p\frac{q+\alpha}{q-p-m+2}} V^{-\frac{p+\alpha+m-2}{q-p-m+2}}  \;d\mu_a dt \\
&\ \;\;\;\;\;\;\;\;\;\;\;\;+\int_0^{\infty}\int_M |\partial_t\phi|^{\frac{q+\alpha}{q-1}} V^{-\frac{\alpha+1}{q-1}}  \;d\mu_a dt\Bigg\}, \\
\end{split}
\end{equation*}
\noindent and hence \eqref{Lem1res}. Here we used the convention that $\nabla u=0$ on level sets of $u$. The convergence of the first and last integral on the left-hand side in $\eqref{Lem1ffin}$ can be proven by Beppo-Levi since $u_{\epsilon}^{\alpha}$ and $u_{\epsilon}^{\alpha-1}$ are monotonously increasing as $\epsilon$ goes to zero; recall that $\alpha < 0$. The powers of $u_{\epsilon}$ in the two remaining integrals on the left-hand side are positive by the assumptions on $\alpha$ in the statement of this Lemma. Thus, in order to apply the Dominated Convergence Theorem, noting that in both cases the integrands are nonnegative, and decreasing in $\epsilon$,
we need to show that, for some $\epsilon >0$,
\begin{equation}\label{Lem1MC1}
     \int_0^{\infty}\int_M u_{\epsilon}^{\frac{(p+\alpha-1)(q+\alpha)}{p+\alpha+m-2}} u^{\frac{(m-1)(q+\alpha)}{p+\alpha+m-2}} \phi^s\chi_{\beta} V  \;d\mu_a dt < \infty
\end{equation}
and
\begin{equation}\label{Lem1MC2}
      \int_0^{\infty}\int_M    u_{\epsilon}^{q+\alpha}\phi^s\chi_{\beta} V \;d\mu_a dt < \infty.
\end{equation}

Inequality $\eqref{Lem1MC2}$ can be proven by the assumption that $u\in L_{\mathrm{loc}}^{q}(M\times(0,\infty);V\;d\mu_a dt)$ since $\phi$ has compact support. Inequality $\eqref{Lem1MC1}$ follows from $\eqref{Lem1MC2}$. This completes the proof of Lemma \ref{thm:Lem1}.
\end{proof}
\end{lem}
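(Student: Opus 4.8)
\emph{Proof strategy.} The plan is a Mitidieri--Pohozaev-type test function argument. First I would regularise by setting $u_\e:=u+\e$ for $\e>0$ and using $\psi:=u_\e^{\alpha}\phi^{s}\chi_\beta$ as a test function in \eqref{equivdefn} (after approximating $\psi$, which inherits the regularity of $u$, by genuine Lipschitz, compactly supported test functions). Differentiating the weight $u_\e^{\alpha}$ produces on the right-hand side a diffusion term $\alpha\int|\nabla u|^{p}u_\e^{\alpha-1}u^{m-1}f\,\phi^{s}\chi_\beta$; since $\alpha<0$ this has the favourable sign and I would move it to the left as $|\alpha|\int|\nabla u|^{p}u_\e^{\alpha-1}u^{m-1}f\,\phi^{s}\chi_\beta$, which is the gradient term in \eqref{Lem1res}. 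The contributions involving $\partial_t u$, $\partial_t\phi$, $\chi_\beta'$ I would reorganise by writing $u=u_\e-\e$ and integrating by parts in $t$ (legitimate since $\phi$ and $\chi_\beta$ are compactly supported in time); after the $\e$-dependent pieces cancel, only $-\frac{s}{\alpha+1}\int u_\e^{\alpha+1}\phi^{s-1}(\partial_t\phi)\chi_\beta$ survives, together with a $\chi_\beta'$ term which is nonnegative in the admissible range of $\alpha$ and can simply be dropped.

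Next I would apply Young's inequality three times, each tuned to absorb one $u$-term into the left-hand side. (a) On the $\nabla\phi$ term, Young with exponents $\frac{p}{p-1}$ and $p$, extracting the factor $|\nabla u|^{p-1}$, so that a small multiple of the gradient term is absorbed --- leaving $\frac34|\alpha|$ of it --- at the cost of $C|\alpha|^{-(p-1)}\int|\nabla\phi|^{p}u_\e^{p+\alpha-1}u^{m-1}\phi^{s-p}\chi_\beta$. (b) On this term, Young with exponents $b=\frac{q+\alpha}{p+\alpha+m-2}$ and $b'=\frac{q+\alpha}{q-p-m+2}$ --- admissible precisely because $q>p+m-2$ and $\alpha>-\frac12(p+m-2)$ --- inserting a power of $V$, so as to split off $\frac14\int u_\e^{(p+\alpha-1)b}u^{(m-1)b}\phi^{s}\chi_\beta V$ together with $C|\alpha|^{-\frac{(p-1)q}{q-p-m+2}}\int|\nabla\phi|^{p\frac{q+\alpha}{q-p-m+2}}V^{-\frac{p+m-2+\alpha}{q-p-m+2}}$. (c) On the surviving time term, Young with exponents $c=\frac{q+\alpha}{\alpha+1}$ and $c'=\frac{q+\alpha}{q-1}$ --- admissible since $q>1$ and $\alpha>-1$ --- so as to absorb $\frac14\int u_\e^{q+\alpha}\phi^{s}\chi_\beta V$ and leave $C\int|\partial_t\phi|^{\frac{q+\alpha}{q-1}}V^{-\frac{\alpha+1}{q-1}}$. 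At each step the hypothesis $s\geq\max\big(1,\frac{pq}{q-p-m+2},\frac{q}{q-1}\big)$ is what keeps the residual powers of $\phi$ nonnegative, so that the factors $\phi^{s-\cdot},\chi_\beta^{\cdot}\leq1$ may be bounded by $1$; along the way one also checks that the powers of $|\alpha|$ remain of the orders $|\alpha|^{-\frac{(p-1)q}{q-p-m+2}}$ and $1$.

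Finally I would let $\e\to0$. The two reaction terms absorbed in (b)--(c) now sit on the left with a minus sign; their exponents of $u_\e$ are positive by the assumption on $\alpha$, and since $\phi$ has compact support and $u\in L^{q}_{\mathrm{loc}}(M\times(0,\infty);V\,d\mu_a dt)$ these integrals are finite, so by monotone (resp.\ dominated) convergence each of them tends to $-\frac14\int u^{q+\alpha}\phi^{s}\chi_\beta V$ (note $(p+\alpha-1)b+(m-1)b=q+\alpha$). The leading term tends to $\int u^{q+\alpha}\phi^{s}\chi_\beta V$ by Beppo Levi, since $u_\e^{\alpha}$ increases as $\e\downarrow0$; hence the net coefficient on $\int u^{q+\alpha}\phi^{s}\chi_\beta V$ becomes $1-\frac14-\frac14=\frac12$. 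The gradient term passes to the limit by Beppo Levi as well ($u_\e^{\alpha-1}$ increases), and the right-hand side does not depend on $\e$. This yields \eqref{Lem1res} with a harmless $\frac12$ in front of the first term.

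The step I expect to be the main obstacle is the bookkeeping in (a)--(c): the Young exponents must be chosen so that \emph{simultaneously} the absorbed quantity is an exact fraction of something already on the left, the power of $V$ produced is exactly $-\frac{p+m-2+\alpha}{q-p-m+2}$, resp.\ $-\frac{\alpha+1}{q-1}$ (so that \textbf{HP}$\mathbf{1}$ can be applied once $\phi$ is specialised in the proof of Theorem~\ref{thm:Thm1}), the residual power of $\phi$ is $\geq0$, and the power of $|\alpha|$ is bounded as stated; and then, before the last absorption, one must separately verify that the two ``bad'' integrals are finite, using only the integrability built into Definition~\ref{thm:Defn} and the compact support of $\phi$. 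The admissible window $-\frac12\min\{1,p-1,p+m-2\}<\alpha<0$ together with the strict inequality $q>p+m-2$ is precisely what makes every exponent above legitimate.
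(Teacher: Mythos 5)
Your proposal is correct and follows essentially the same route as the paper's proof: the same test function $\psi = u_\epsilon^\alpha\phi^s\chi_\beta$, the same integration-by-parts rearrangement of the time terms (dropping the nonnegative $\chi_\beta'$ contribution), the same three applications of Young's inequality with exponents $(\tfrac{p}{p-1},p)$, $(b,b')=\bigl(\tfrac{q+\alpha}{p+\alpha+m-2},\tfrac{q+\alpha}{q-p-m+2}\bigr)$, $(c,c')=\bigl(\tfrac{q+\alpha}{\alpha+1},\tfrac{q+\alpha}{q-1}\bigr)$, and the same limiting argument via monotone convergence using $u\in L^q_{\mathrm{loc}}(Vd\mu_a\,dt)$. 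The resulting $\tfrac12$ coefficient on the first term is exactly what the paper obtains as well and is absorbed into $C$.
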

We now turn to the proof of \textbf{Theorem~\ref{thm:Thm1}}:
\begin{proof}
We will show that, for fixed $\beta > 0$,
\begin{equation}\label{thm11}
     \int_0^{\infty} \int_M   u^q \chi_{\beta} V \;d\mu_a dt  = 0.
\end{equation}
Then, taking the limit $\beta\rightarrow 0$, we have, by Fatou's Lemma and by the positivity of all functions involved, that $u=0$ a.e. on $M\times(0,\infty)$.

Firstly, observe that Hypothesis \textbf{HP}$\mathbf{1}$ provides, in particular, growth estimates for sets of the form $E_{2^{1/\theta_2}nR}\setminus E_{nR}$, with $\theta_2\geq1$, which will be the relevant sets appearing in the proof below.

In the following, let $C_0, \theta_1, \theta_2$ be as in \textbf{HP}$\mathbf{1}$; let $\alpha=-\frac{1}{\log R}$ and $C_1 > \frac{C_0+\theta_2+1}{\theta_2}$. In order to show Equality $\eqref{thm11}$, inspired by an idea of \cite{GrigS} for the elliptic, semilinear case, we use Lemma~\ref{thm:Lem1} and insert the sequence $(\phi_n)_{n\in\mathbb{N}}$ of test functions into $\eqref{Lem1res}$, where $\phi_n\coloneq\phi\eta_n$, with
\begin{equation}\label{phi}
\phi(x,t)\coloneq\begin{cases}
       1 &\quad\text{if }(x,t)\in E_R\\
   \Big(\frac{r(x)^{\theta_2}+t^{\theta_1}}{R^{\theta_2}}\Big)^{C_1\alpha}&\quad\text{if }(x,t)\in E_R^c, \\
   \end{cases}
\end{equation}
\noindent and for all $n\in\mathbb{N}$,
\begin{equation}\label{eta}
\eta_n(x,t)\coloneq\begin{cases}
       1 &\quad\text{if }(x,t)\in E_{nR}\\
   2- \frac{r(x)^{\theta_2}+t^{\theta_1}}{(nR)^{\theta_2}}&\quad\text{if }(x,t)\in E_{2^{1/\theta_2}nR}\setminus E_{nR} \\
    0 &\quad\text{if }(x,t)\in E_{2^{1/\theta_2}nR}^c.\\
     \end{cases}  \\
\end{equation}
Note that $\phi_n\in\mathrm{Lip}(S)$ with $0\leq\phi_n\leq1$ and that
\begin{equation*}
    \partial_t\phi_n=(\partial_t\phi)\eta_n+\phi(\partial_t\eta_n),\;\;\;\;\;\nabla\phi_n=(\nabla\phi)\eta_n+\phi(\nabla\eta_n)
\end{equation*}

\noindent a.e. in $S$. In addition, we have for every $a\geq1$,
\begin{equation}\label{a}
     |\partial_t\phi_n|^a\leq 2^{a-1}(|\partial_t\phi|^a+\phi^a|\partial_t\eta_n|^a),\;\;\;\;\;     |\nabla\phi_n|^a\leq 2^{a-1}(|\nabla\phi|^a+\phi^a|\eta_n|^a).
\end{equation}
Inserting $\phi_n$ into $\eqref{Lem1res}$, yields, with $s\geq \mathrm{max}\big(1, \frac{pq}{q-p-m+2},\frac{q}{q-1}\big)$, and $|\alpha|$ small enough:
\begin{equation}\label{thm1maineq}
\begin{split}
   \int_0^{\infty} \int_M &\  u^{q+\alpha}  \phi_n^s\chi_{\beta} V \;d\mu_a dt \\
    &\ \leq C \Bigg\{ |\alpha|^{-\frac{(p-1)q}{q-p-m+2}} \int_0^{\infty}\int_M |\nabla\phi_n|^{\frac{p(q+\alpha)}{q-p-m+2}} V^{-\frac{p+\alpha+m-2}{q-p-m+2}}  \;d\mu_a dt \\
&\ \;\;\;\;\;\;\;\;\;\;\;\; +\int_0^{\infty}\int_M |\partial_t\phi_n|^{\frac{q+\alpha}{q-1}} V^{-\frac{\alpha+1}{q-1}}  \;d\mu_a dt\Bigg\} \\
&\ \leq C \Bigg\{ |\alpha|^{-\frac{(p-1)q}{q-p-m+2}} \Bigg(\int\int_{E_R^c} |\nabla\phi|^{\frac{p(q+\alpha)}{q-p-m+2}} V^{-\frac{p+\alpha+m-2}{q-p-m+2}}  \;d\mu_a dt \\
&\ \;\;\;\;\;\;\;\;\;\;\;\;+ \int\int_{E_{2^{1/\theta_2}nR}\setminus E_{nR}}\phi^{\frac{p(q+\alpha)}{q-p-m+2}}|\nabla\eta_n|^{\frac{p(q+\alpha)}{q-p-m+2}}  V^{-\frac{p+\alpha+m-2}{q-p-m+2}}  \;d\mu_a dt \Bigg) \\
&\ \;\;\;\;\;\;\;\;\;\;\;\; +\int\int_{E_R^c} |\partial_t\phi|^{\frac{q+\alpha}{q-1}} V^{-\frac{\alpha+1}{q-1}}  \;d\mu_a dt \\
&\ \;\;\;\;\;\;\;\;\;\;\;\; +\int\int_{E_{2^{1/\theta_2}nR}\setminus E_{nR}}\phi^{\frac{q+\alpha}{q-1}}|\partial_t\eta_n|^{\frac{q+\alpha}{q-1}} V^{-\frac{\alpha+1}{q-1}}  \;d\mu_a dt\Bigg\} \\
&\ = C\Big\{ |\alpha|^{-\frac{(p-1)q}{q-p-m+2}} (I_1+I_2)+I_3+I_4 \Big\},
\end{split}
\end{equation}
where
\begin{align}
    I_1 & \coloneq \int\int_{E_R^c}|\nabla\phi|^{\frac{p(q+\alpha)}{q-p-m+2}} V^{-\frac{p+\alpha+m-2}{q-p-m+2}}  \;d\mu_a dt, \label{I_1}\\
    I_2 & \coloneq\int\int_{E_{2^{1/\theta_2}nR}\setminus E_{nR}}\phi^{\frac{p(q+\alpha)}{q-p-m+2}}|\nabla\eta_n|^{\frac{p(q+\alpha)}{q-p-m+2}}  V^{-\frac{p+\alpha+m-2}{q-p-m+2}}  \;d\mu_a dt, \label{I_2}\\
    I_3 & \coloneq\int\int_{E_R^c} |\partial_t\phi|^{\frac{q+\alpha}{q-1}} V^{-\frac{\alpha+1}{q-1}}  \;d\mu_a dt, \label{I_3}\\
    I_4 & \coloneq\int\int_{E_{2^{1/\theta_2}nR}\setminus E_{nR}}\phi^{\frac{q+\alpha}{q-1}}|\partial_t\eta_n|^{\frac{q+\alpha}{q-1}} V^{-\frac{\alpha+1}{q-1}}  \;d\mu_a dt. \label{I_4}
\end{align}

We will start by estimating $I_3$ and $I_4$. These two integrals coincide with the integrals $I_3$ and $I_4$ in the proof of Theorem $2$ in \cite{dariospaper}, with the only exception that we integrate against the weighted measure $\mu_a$ in $\eqref{I_3}$ and $\eqref{I_4}$. Proceeding precisely as in \cite{dariospaper}, one sees that $\eqref{HP11}$ in Hypothesis \textbf{HP}$\mathbf{1}$ leads to the following estimate for $R$ large enough:
\begin{equation}\label{I34fin}
    I_3+I_4\leq C\Big( |\alpha|^{\frac{1}{q-1}-s_2}+ n^{-\frac{|\alpha|}{q-1}} \big[ \log(nR)\big]^{s_2} \Big).
\end{equation}
Indeed, Estimate $(1.6)$ in \cite{dariospaper} agrees with $\eqref{HP11}$ in \textbf{HP}$\mathbf{1}$ above, except for the weighted measure $\mu_a$.

In order to estimate $I_1$, we compute $\nabla \phi$ for $\phi$ in $\eqref{phi}$ and use the fact that $|\nabla r(x)|\leq 1$ for a.e. all $x\in M$. This yields
\begin{equation}\label{I11}
\begin{split}
    &\ |\alpha|^{-\frac{(p-1)q}{q-p-m+2}}  I_1 \\
    &\leq C\ |\alpha|^{-\frac{(p-1)q}{q-p-m+2}} \int\int_{E_R^c}
    \Bigg[ C_1|\alpha|\theta_2 \Bigg( \frac{r(x)^{\theta_2} + t^{\theta_1}}{R^{\theta_2}} \Bigg)^{C_1\alpha-1}
    \frac{r(x)^{\theta_2-1}}{R^{\theta_2}} \Bigg]^{\frac{p(q+\alpha)}{q-p-m+2}}  V^{-\frac{p+\alpha+m-2}{q-p-m+2}}   \;d\mu_a dt \\
    &\ \;\;\;\; \leq C |\alpha|^{\frac{p(q+\alpha)-(p-1)q}{q-p-m+2}}
    R^{C_1\theta_2|\alpha| \frac{p(q+\alpha)}{q-p-m+2}} \\
    &\ \;\;\;\; \;\;\;\;\times \int\int_{E_R^c}
    \Big[ \big( r(x)^{\theta_2} + t^{\theta_1} \big)^{1/\theta_2} \Big]^{\theta_2 (C_1\alpha-1)\frac{p(q+\alpha)}{q-p-m+2}}\\
    &\ \;\;\;\;\ \ \ \ \ \ \ \ \ \ \ \ \ \ \ \ \times r(x)^{(\theta_2-1)p \frac{q+\alpha}{q-p-m+2}}
    V^{-\frac{p+\alpha+m-2}{q-p-m+2}} \;d\mu_a dt. \\
\end{split}
\end{equation}
Now observe that for any constant $\bar{C}\in\mathbb{R}$, and for $R>1, \alpha=-\frac{1}{\log R}$, we have
\begin{equation}\label{R est}
    R^{|\alpha|\bar{C}} = e^{|\alpha|\bar{C}\log R} = e^{\bar{C}}\leq C.
\end{equation}
In addition, if $F\colon[0,\infty)\to[0,\infty)$ is decreasing and $\eqref{HP12}$ in \textbf{HP}$\mathbf{1}$ holds, then, for every $0<\epsilon<\epsilon_0$ and $R>R_0$,
\begin{equation}\label{thmf1}
\begin{split}
    \int\int_{E_R^c} F\Big( &\ \big[r(x)^{\theta_2} + t^{\theta_1} \big]^{1/\theta_2} \Big)
    r(x)^{(\theta_2-1)p\left(\frac{q}{q-p-m+2} - \epsilon\right)}
    V^{-\frac{p+m-2}{q-p-m+2} + \epsilon} \;d\mu_a dt \\
    &\ \leq C \int_{R/2^{1/\theta_2}}^{\infty} F(z) z^{\bar{s}_3 + C_0\epsilon - 1}
    \log(z)^{s_4} \;dz.
\end{split}
\end{equation}
This can be shown by minor variations in the proof of Formula $(2.19)$ in \cite{GrigS}.
Applying $\eqref{R est}$, and $\eqref{thmf1}$ with $\epsilon=\frac{|\alpha|}{q-p-m+2}$ to $\eqref{I11}$, yields
\begin{equation}\label{I12}
     |\alpha|^{-\frac{(p-1)q}{q-p-m+2}} I_1
     \leq  C  |\alpha|^{\frac{p(q+\alpha)-(p-1)q}{q-p-m+2}}
     \int_{R/2^{1/\theta_2}}^{\infty} z^{\theta_2(C_1\alpha-1)\frac{p(q+\alpha)}{q-p-m+2}+\bar{s}_3+C_0\frac{|\alpha|}{q-p-m+2}-1} \log(z)^{s_4}\;dz.
\end{equation}
Now let
\begin{equation*}
    b\coloneq \theta_2(C_1\alpha-1)\frac{p(q+\alpha)}{q-p-m+2}+\bar{s}_3+C_0\frac{|\alpha|}{q-p-m+2}.
\end{equation*}
Through the choice $C_1>\frac{C_0+\theta_2+1}{\theta_2}$, we have for $|\alpha|$ sufficiently small, i.e., for $R>1$ sufficiently large,
\begin{equation}\label{I1b}
     b < -\frac{|\alpha|}{q-p-m+2}.
\end{equation}
Integrating by substitution with $y=|b|\log z$, we can estimate the integral in $\eqref{I12}$ as follows:
\begin{equation*}
\begin{split}
    \int_{R/2^{1/\theta_2}}^{\infty} z^{\theta_2(C_1\alpha-1)\frac{p(q+\alpha)}{q-p-m+2}\bar{s}_3+C_0\frac{|\alpha|}{q-p-m+2}-1} \log(z)^{s_4}\;dz
   &\ = \int_{R/2^{1/\theta_2}}^{\infty} z^{b-1} \log(z)^{s_4}\;dz\\
    &\ \leq \int_0^{\infty} e^{-y} \Big( \frac{y}{|b|} \Big)^{s_4} \frac{1}{|b|}\;dy\\
    &\ \leq C |b|^{-s_4-1}\leq C|\alpha|^{-s_4-1},
\end{split}
\end{equation*}
where we used $\eqref{I1b}$ in the last step. In summary, we have
\begin{equation}\label{I1fin}
\begin{split}
      |\alpha|^{-\frac{(p-1)q}{q-p-m+2}} I_1 &\ \leq C   |\alpha|^{\frac{p(q+\alpha)-(p-1)q}{q-p-m+2}-s_4-1}\\
      &\ = C |\alpha|^{\frac{p(1+\alpha)+m-2}{q-p-m+2}-s_4} \leq C  |\alpha|^{\frac{p+m-2}{q-p-m+2}-s_4}.
\end{split}
\end{equation}

Let us now turn to $I_2$ in $\eqref{I_2}$. Inserting $\nabla\eta_n$ and using $ |\nabla r(x)|\leq 1$, we estimate
\begin{equation*}
    \begin{split}
        I_2 &\  \leq C \Bigg( \sup_{(x,t)\in E_{2^{1/\theta_2}nR}\setminus E_{nR}} \phi(x,t)\Bigg)^{\frac{p(q+\alpha)}{q-p-m+2}} \\
        &\;\;\;\;\times\int\int_{E_{2^{1/\theta_2}nR}\setminus E_{nR}} \Bigg[  \frac{\theta_2}{(nR)^{\theta_2}} r(x)^{\theta_2-1}\Bigg]^{\frac{p(q+\alpha)}{q-p-m+2}}   V^{-\frac{p+\alpha+m-2}{q-p-m+2}}\;d\mu_adt\\
        &\ \leq C n^{C_1\theta_2\alpha\frac{p(q+\alpha)}{q-p-m+2}} (nR)^{-\frac{\theta_2p(q+\alpha)}{q-p-m+2}}
        \int\int_{E_{2^{1/\theta_2}nR}\setminus E_{nR}} r(x)^{(\theta_2-1)p\frac{q+\alpha}{q-p-m+2}} V^{-\frac{p+\alpha+m-2}{q-p-m+2}}\;d\mu_adt.\\
    \end{split}
\end{equation*}
We can apply $\eqref{HP12}$ in \textbf{HP}$\mathbf{1}$ for $\epsilon\coloneq \frac{|\alpha|}{q-p-m+2}$ and $R$ large enough; recall that $\alpha=-\frac{1}{\log R}$. This yields
\begin{equation*}
\begin{split}
    I_2 &\ \leq C  n^{C_1\theta_2\alpha\frac{p(q+\alpha)}{q-p-m+2}} (nR)^{-\frac{\theta_2p(q+\alpha)}{q-p-m+2}}(nR)^{\bar{s}_3+C_0\frac{|\alpha|}{q-p-m+2}} \big[ \log(nR)\big]^{s_4}\\
    &\ = C n^{\frac{|\alpha|}{q-p-m+2}(-C_1\theta_2p(q+\alpha)+\theta_2p+C_0)} R^{|\alpha|\frac{\theta_2p+C_0}{q-p-m+2}}\big[ \log(nR)\big]^{s_4}.
\end{split}
\end{equation*}
Using again the lower bound on $C_1$, namely $C_1>\frac{C_0+\theta_2+1}{\theta_2}$ and the observation in $\eqref{R est}$, we have for $|\alpha|$ small enough:
\begin{equation}\label{I2fin}
    I_2 \leq C n^{-\frac{|\alpha|}{q-p-m+2}} \big[ \log(nR)\big]^{s_4}.
\end{equation}
Finally, we can return to $\eqref{thm1maineq}$ and see that there exists a constant $C$ independent on $n$ and $R$ such that
\begin{equation*}
\begin{split}
     \int\int_{E_R}  u^{q+\alpha} &\ \chi_{\beta} V \;d\mu_a dt\\
     &\ \leq \int_0^{\infty} \int_M   u^{q+\alpha} \phi_n^s\chi_{\beta} V \;d\mu_a dt  \\
    &\ \leq C \Big(  |\alpha|^{\frac{p+m-2}{q-p-m+2}-s_4} +  |\alpha|^{-\frac{(p-1)q}{q-p-m+2}} n^{-\frac{|\alpha|}{q-p-m+2}} \big[ \log(nR)\big]^{s_4}\\
    &\ \;\;\;\;\;\;\;\;\;\;\;+ |\alpha|^{\frac{1}{q-1}-s_2} +n^{-\frac{|\alpha|}{q-1}} \big[ \log(nR)\big]^{s_2} \Big).
\end{split}
\end{equation*}

\noindent Passing to the $\liminf$ as $n\rightarrow\infty$, we arrive the following inequality:
\begin{equation}\label{thm1final}
     \int\int_{E_R} u^{q+\alpha} \chi_{\beta} V \;d\mu_a dt
     \leq C \Big(  |\alpha|^{\frac{p+m-2}{q-p-m+2}-s_4} +|\alpha|^{\frac{1}{q-1}-s_2} \Big).
\end{equation}
Note that the powers of $|\alpha|$ in $\eqref{thm1final}$ are positive. So taking the limit as $R\rightarrow\infty$, i.e., $|\alpha|\rightarrow0$, by another application of Fatou's Lemma, we have
\begin{equation*}
     \int\int_{E_R} u^q \chi_{\beta} V \;d\mu_a dt
     \leq 0,
\end{equation*}
which is precisely the inequality in $\eqref{thm11}$. This finishes the proof.
\end{proof}
\subsection{Proof of Theorem~\ref{thm:Thm2}}

We start with the following intermediate result.

\begin{lem}\label{thm:Lem2}
Let $p>1$, $m\geq1$ and $q>\max(p+m-2, 1)$. Let further \[s\geq \mathrm{max}\left(1,p, \frac{2pq}{q-p-m+2},\frac{q}{q-1}\right)\] be fixed. Let $u$ be a nonnegative weak solution to Problem $\eqref{main eq}$ and let $\chi_{\beta}$ be defined as in $\eqref{chib}$. Then there exists a constant $C>0$ (depending only on $p,q,m,K$  and $s$) such that for all $\alpha \in \big(
-\frac{1}{2} \mathrm{min}\big\{1,p-1, \frac{1}{p-1},p+m-2,\frac{q-p-m+2}{p-1}\big\},0 \big)$ and all $\phi\in \mathrm{Lip}(M\times(0,\infty)), 0\leq\phi\leq1$ with compact support, one has, with $ H \coloneq\{ (x,t)\in S=M\times (0,\infty) \;:\;\phi(x.t)=1 \}$,
\begin{equation}\label{lem2result}
\begin{split}
    \int_0^{\infty}&\ \int_M u^q\phi^s\chi_{\beta}V\;d\mu_adt\\
    &\ \leq C \Bigg\{ |\alpha|^{-1-\frac{(p-1)q}{q-p-m+2}} \int\int_{S\setminus  H } |\nabla\phi|^{\frac{p(q+\alpha)}{q-p-m+2}} V^{-\frac{p+\alpha+m-2}{q-p-m+2}}  \;d\mu_a dt\\
     &\ \;\;\;\;\; \;\;\;\;\; \;  + |\alpha|^{-1} \int\int_{S\setminus H } |\partial_t\phi|^{\frac{q+\alpha}{q-1}} V^{-\frac{\alpha+1}{q-1}}  \;d\mu_a dt\Bigg\}^{\frac{p-1}{p}}\\
     &\ \;\;\;\; \times \Bigg( \int\int_{S\setminus  H } |\nabla \phi|^{\frac{pq}{q-[(1-\alpha)(p-1)+(m-1)]}}\\
     &\ \quad  \;\;\;\;\; \;\;\;\;\; \;\;\;\;\;\;\;\;\;\;\times V^{-\frac{(1-\alpha)(p-1)+(m-1)}{q-[(1-\alpha)(p-1)+(m-1)]}} \;d\mu_a dt  \Bigg)^{\frac{q-[(1-\alpha)(p-1)+(m-1)]}{pq}} \\
    &\ \;\;\;\; \times \Bigg(\int\int_{S\setminus  H }
    u^q\phi^s\chi_{\beta}V\;d\mu_a dt \Bigg)^{\frac{(1-\alpha)(p-1)+(m-1)}{pq}}\\
    &\ \;\;\;\; + C \Bigg(\int\int_{S\setminus  H }  |\partial_t\phi|^{\frac{q}{q-1}} V^{-\frac{1}{q-1}} \;d\mu_adt \Bigg)^{\frac{q-1}{q}}\\
    &\ \;\;\;\; \times  \Bigg(\int\int_{S\setminus  H } u^q\phi^s\chi_{\beta}V\;d\mu_adt \Bigg)^{\frac{1}{q}}.\\
\end{split}
\end{equation}
\begin{proof}
Since $u$ is a weak solution and $\psi\coloneq\phi^s\chi_{\beta}$ is a test function in the sense of Definition ~\ref{thm:Defn}, by $\eqref{equivdefn}$, we have
\begin{equation*}
    \begin{split}
        \int_0^{\infty}\int_M &\ u^q\phi^s\chi_{\beta}V\;d\mu_adt\\
        &\ \leq s \int_0^{\infty}\int_M  |\nabla\phi||\nabla u|^{p-1}u^{m-1}\phi^{s-1}\chi_{\beta}   f(|\nabla u|) \;d\mu_adt\\
        &\ \;\;\;\; - s \int_0^{\infty}\int_M \phi^{s-1} (\partial_t\phi) \chi_{\beta}u \;d\mu_adt\\
        &\ \;\;\;\;  - \int_0^{\infty}\int_M \phi^s \chi_{\beta}' u \;d\mu_adt\\
        &\ \revcoloneqq K_1+K_2+K_3.
    \end{split}
\end{equation*}
By the nonpositivity of $K_3$, we have in particular:
\begin{equation}\label{lem2main}
        \int_0^{\infty}\int_M u^q\phi^s\chi_{\beta}V\;d\mu_adt \leq K_1+K_2.
\end{equation}
Let us first look at $K_2$. By Hölder's Inequality, we see that
\begin{equation*}
\begin{split}
    K_2 &\ \leq s \int\int_{S\setminus  H } \Bigg( u \phi^{\frac{s}{q}}\chi_{\beta}^{\frac{1}{q}} V^{\frac{1}{q}} \Bigg) \times \Bigg( \phi^{s\frac{q-1}{q}-1} \chi_{\beta}^{\frac{q-1}{q}}  |\partial_t\phi| V^{-\frac{1}{q}} \Bigg)\;d\mu_adt\\
    &\ \leq s \Bigg(\int\int_{S\setminus  H } u^q\phi^s\chi_{\beta}V\;d\mu_adt \Bigg)^{\frac{1}{q}}\\
    &\ \;\;\;\;\; \times \Bigg(\int\int_{S\setminus  H } \phi^{s-\frac{q}{q-1}}\chi_{\beta}  |\partial_t\phi|^{\frac{q}{q-1}} V^{-\frac{1}{q-1}} \;d\mu_adt \Bigg)^{\frac{q-1}{q}}.\\
\end{split}
\end{equation*}

\noindent By the choice $s>\frac{q}{q-1}$ and since $0\leq\phi,\chi_{\beta}\leq1$, we deduce
\begin{equation}\label{lem2I2final}
\begin{split}
    K_2 \leq s \Bigg(\int\int_{S\setminus  H } u^q\phi^s\chi_{\beta}V\;d\mu_adt \Bigg)^{\frac{1}{q}}
      \times \Bigg(\int\int_{S\setminus  H }  |\partial_t\phi|^{\frac{q}{q-1}} V^{-\frac{1}{q-1}} \;d\mu_adt \Bigg)^{\frac{q-1}{q}}.
\end{split}
\end{equation}
Applying Hölder's Inequality again, this time to $K_1$, yields
\begin{equation}\label{lem2I11}
\begin{split}
    K_1 &\ = s \int_0^{\infty}\int_M \Bigg( |\nabla u|^{p-1} u^{(\alpha+m-2)\frac{p-1}{p}} \phi^{s\frac{p-1}{p}} \chi_{\beta}^{\frac{p-1}{p}} f(|\nabla u|)^{\frac{p-1}{p}} \Bigg)\\
    &\ \;\;\;\;\; \;\;\;\;\; \;\;\;\;\;  \;\;\;\;\; \times \Bigg( |\nabla\phi| u^{-(\alpha-1)\frac{p-1}{p}+(m-1)\frac{1}{p}} \phi^{\frac{s}{p}-1} \chi_{\beta}^{\frac{1}{p}}f(|\nabla u|)^{\frac{1}{p}}   \Bigg) \;d\mu_adt\\
    &\ \leq s \Bigg( \int_0^{\infty}\int_M |\nabla u|^p u^{\alpha+m-2} \phi^{s} \chi_{\beta} f(|\nabla u|) \;d\mu_adt \Bigg)^{\frac{p-1}{p}} \\
    &\ \;\;\;\;\;  \times \Bigg(\int_0^{\infty}\int_M |\nabla\phi|^p u^{-(\alpha-1)(p-1)+(m-1)} \phi^{s-p} \chi_{\beta}f(|\nabla u|)  \;d\mu_adt \Bigg)^{\frac{1}{p}}. \\
\end{split}
\end{equation}
Here we used  the convention that $\nabla u=0$ on level sets of $u$. Applying Lemma~\ref{thm:Lem1} to the first integral on the right-hand side in $\eqref{lem2I11}$, we can further estimate:
\begin{equation}\label{I1lem2}
\begin{split}
     K_1 &\ \leq C \Bigg\{ |\alpha|^{-1-\frac{(p-1)q}{q-p-m+2}} \int\int_{S\setminus  H } |\nabla\phi|^{p\frac{q+\alpha}{q-p-m+2}} V^{-\frac{p+\alpha+m-2}{q-p-m+2}}  \;d\mu_a dt\\
     &\ \;\;\;\;\; \;\;\;\;\; \; + |\alpha|^{-1} \int\int_{S\setminus  H } |\partial_t\phi|^{\frac{q+\alpha}{q-1}} V^{-\frac{\alpha+1}{q-1}}  \;d\mu_a dt\Bigg\}^{\frac{p-1}{p}}\\
     &\ \;\;\;\;\; \times \Bigg(\int\int_{S\setminus  H } |\nabla\phi|^p u^{-(\alpha-1)(p-1)+(m-1)} \phi^{s-p} \chi_{\beta}  \;d\mu_adt \Bigg)^{\frac{1}{p}}\\
     &\ \revcoloneqq K_1' \times K_1'',
\end{split}
\end{equation}
where $C$ is a suitable constant depending also on $K$, where as in our running assumption $0\le f\le K$.
In order to estimate $K_1''$ by Hölder's Inequality, we introduce
\begin{equation}\label{lem2b}
    b\coloneq \frac{q}{(1-\alpha)(p-1)+(m-1)}\;\;\;\;\;\mathrm{and} \;\;\;\;\; b'=\frac{b}{b-1}=\frac{q}{q-[(1-\alpha)(p-1)+(m-1)]}.
\end{equation}

\noindent Note that $b$ is a well-defined Hölder exponent since $q>p+m-2$ and
\begin{equation*}
    |\alpha| \leq \frac{q-p-m+2}{2(p-1)}.
\end{equation*}
Then we can estimate $K_1''$ as follows:
\begin{equation}\label{I1''}
\begin{split}
    K_1'' & = \Bigg\{\int\int_{S\setminus H } \Bigg(
    u^{(1-\alpha)(p-1)+(m-1)}\phi^{\frac{s}{b}}\chi_{\beta}^{\frac{1}{b}} V^{\frac{1}{b}} \Bigg) \\
    & \;\;\;\;\;\;\times \Bigg( |\nabla \phi|^p \phi^{\frac{s}{b'}-p} \chi_{\beta}^{\frac{1}{b'}} V^{-\frac{1}{b}} \Bigg) \;d\mu_a dt \Bigg\}^{\frac{1}{p}} \\
    & \leq \Bigg(\int\int_{S\setminus  H } u^q\phi^s\chi_{\beta}V\;d\mu_a dt \Bigg)^{\frac{1}{bp}} \\
    & \;\;\;\;\times \Bigg(\int\int_{S\setminus  H } |\nabla \phi|^{b'p} \phi^{s-b'p } \chi_{\beta} V^{-\frac{b'}{b}} \;d\mu_a dt\Bigg)^{\frac{1}{b'p}}.
\end{split}
\end{equation}
Inserting the values for $b$ and $b'$ in $\eqref{lem2b}$ into $\eqref{I1''}$, the resulting estimate into $\eqref{I1lem2}$, and using $s>\frac{2pq}{q-p-m+2}>b'p$, $0\leq\phi,\chi_{\beta}\leq1$ yields
\begin{equation}\label{lem2I1final}
\begin{split}
     K_1 \leq &\ C \Bigg\{ |\alpha|^{-1-\frac{(p-1)q}{q-p-m+2}} \int\int_{S\setminus  H } |\nabla\phi|^{\frac{p(q+\alpha)}{q-p-m+2}} V^{-\frac{p+\alpha+m-2}{q-p-m+2}}  \;d\mu_a dt \\
     &\;\;\;\;\;\;\;\;\; + |\alpha|^{-1} \int\int_{S\setminus  H } |\partial_t\phi|^{\frac{q+\alpha}{q-1}} V^{-\frac{\alpha+1}{q-1}}  \;d\mu_a dt \Bigg\}^{\frac{p-1}{p}} \\
    &\times \Bigg(\int\int_{S\setminus  H }
    u^q\phi^s\chi_{\beta}V\;d\mu_a dt \Bigg)^{\frac{(1-\alpha)(p-1)+(m-1)}{pq}} \\
    &\times \Bigg( \int\int_{S\setminus  H }  |\nabla \phi|^{\frac{pq}{q-[(1-\alpha)(p-1)+(m-1)]}}\\
    & \quad \;\;\;\;\;\;\;\;\;\;\;\;\;\;\; \times V^{-\frac{(1-\alpha)(p-1)+(m-1)}{q-[(1-\alpha)(p-1)+(m-1)]}} \;d\mu_a dt \Bigg)^{\frac{q-[(1-\alpha)(p-1)+(m-1)]}{pq}}.
\end{split}
\end{equation}
Lastly, we insert $\eqref{lem2I2final}$ and  $\eqref{lem2I1final}$ into $\eqref{lem2main}$. It follows that $\eqref{lem2result}$ does indeed hold.
\end{proof}
\end{lem}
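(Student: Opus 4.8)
The plan is to start from the weak formulation \eqref{equivdefn} applied to the admissible test function $\psi = \phi^s \chi_\beta$ (rather than the weighted function used in Lemma~\ref{thm:Lem1}), which produces three terms on the right-hand side: a gradient term $K_1 = s\int\int |\nabla\phi||\nabla u|^{p-1}u^{m-1}\phi^{s-1}\chi_\beta f(|\nabla u|)\,d\mu_a dt$, a time-derivative term $K_2$, and a term $K_3$ involving $\chi_\beta'$. Since $\chi_\beta$ is nondecreasing and $u\geq 0$, the term $K_3$ enters with a minus sign and is $\leq 0$, so it may be discarded, leaving $\int\int u^q\phi^s\chi_\beta V\,d\mu_a dt \leq K_1 + K_2$. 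Because $\phi \equiv 1$ on $K$, both $\nabla\phi$ and $\partial_t\phi$ vanish there, so every resulting integral may be restricted to $S\setminus K$.

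For $K_2$ I would apply Hölder's inequality with exponents $q$ and $q/(q-1)$, splitting the integrand as $\bigl(u\,\phi^{s/q}\chi_\beta^{1/q}V^{1/q}\bigr)\cdot\bigl(\phi^{s(q-1)/q-1}\chi_\beta^{(q-1)/q}|\partial_t\phi|V^{-1/q}\bigr)$; then $s\geq q/(q-1)$ and $0\leq\phi,\chi_\beta\leq 1$ let me drop the surplus powers of $\phi$ and $\chi_\beta$, giving exactly the last term of \eqref{lem2result}, a constant times $\bigl(\int\int_{S\setminus K}u^q\phi^s\chi_\beta V\bigr)^{1/q}\bigl(\int\int_{S\setminus K}|\partial_t\phi|^{q/(q-1)}V^{-1/(q-1)}\bigr)^{(q-1)/q}$.

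The heart of the argument is $K_1$. I would apply Hölder with exponents $p/(p-1)$ and $p$, grouping $|\nabla u|^{p-1}u^{(\alpha+m-2)(p-1)/p}\phi^{s(p-1)/p}\chi_\beta^{(p-1)/p}f^{(p-1)/p}$ into the first factor, whose $p/(p-1)$-th power is the quantity $\int\int|\nabla u|^p u^{\alpha+m-2}\phi^s\chi_\beta f$ controlled by Lemma~\ref{thm:Lem1}, and the remainder into the second factor. Dropping the nonnegative zeroth-order term on the left of \eqref{Lem1res} and dividing through by $|\alpha|$ bounds that first factor by $C|\alpha|^{-1}\{\cdots\}$, which upon taking the $(p-1)/p$-th power yields precisely the bracketed quantity with the $|\alpha|^{-1-(p-1)q/(q-p-m+2)}$ and $|\alpha|^{-1}$ weights appearing in \eqref{lem2result}. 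For the second Hölder factor, $\int\int|\nabla\phi|^p u^{(1-\alpha)(p-1)+(m-1)}\phi^{s-p}\chi_\beta f$, I would bound $f\leq K$ and apply Hölder once more with $b = q/[(1-\alpha)(p-1)+(m-1)]$ and its conjugate $b'$ — legitimate since $q>p+m-2$ together with $|\alpha|\leq (q-p-m+2)/(2(p-1))$ forces $b>1$ — grouping $u^{(1-\alpha)(p-1)+(m-1)}\phi^{s/b}\chi_\beta^{1/b}V^{1/b}$ into the first slot (whose $b$-th power recovers $u^q\phi^s\chi_\beta V$) and $|\nabla\phi|^p\phi^{s/b'-p}\chi_\beta^{1/b'}V^{-1/b}$ into the second, then using $s\geq 2pq/(q-p-m+2)\geq pb'$ and $0\leq\phi,\chi_\beta\leq 1$ to discard surplus factors. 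This produces the remaining two factors in \eqref{lem2result}. Substituting the estimates for $K_1$ and $K_2$ into $\int\int u^q\phi^s\chi_\beta V\leq K_1+K_2$ gives \eqref{lem2result}.

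The main obstacle is bookkeeping rather than any new idea: one must check that the exponents $b,b'$ (and the conjugate pair implicit in invoking Lemma~\ref{thm:Lem1}) are admissible Hölder exponents under the stated constraints on $\alpha$, that the hypothesis $s\geq\max(1,p,2pq/(q-p-m+2),q/(q-1))$ is exactly strong enough both to satisfy the requirement $s\geq\max(1,pq/(q-p-m+2),q/(q-1))$ of Lemma~\ref{thm:Lem1} and to absorb every leftover power of $\phi$ and $\chi_\beta$, and that the powers of $|\alpha|$ propagate correctly through the two nested Hölder steps. A secondary delicate point is the convention $\nabla u = 0$ on level sets of $u$, which is what makes the factor $u^{\alpha+m-2}$ — with possibly negative exponent when $m=1$ — harmless in the term handled by Lemma~\ref{thm:Lem1}.
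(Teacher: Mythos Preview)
Your proposal is correct and follows essentially the same route as the paper: insert $\psi=\phi^s\chi_\beta$ into \eqref{equivdefn}, discard the $\chi_\beta'$ term by sign, estimate $K_2$ by H\"older with exponents $q,\,q/(q-1)$, and for $K_1$ first apply H\"older with $p/(p-1),\,p$ to isolate the gradient energy controlled by Lemma~\ref{thm:Lem1}, then apply H\"older with $b=q/[(1-\alpha)(p-1)+(m-1)]$ and $b'$ to the remaining factor. The bookkeeping observations you flag (admissibility of $b$, the role of $s\ge 2pq/(q-p-m+2)$, the propagation of $|\alpha|^{-1}$, and the level-set convention) are exactly the checks the paper carries out.
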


We are now ready to prove \textbf{Theorem~\ref{thm:Thm2}}:

\begin{proof}

As in the proof of Theorem~\ref{thm:Thm1} above, we will show that for any fixed $\beta > 0$,
\begin{equation}\label{thm21aim}
     \int_0^{\infty} \int_M   u^q \chi_{\beta} V \;d\mu_a dt  = 0.
\end{equation}

\noindent Applying Fatou's Lemma as above, it follows that $u=0$ a.e. on $M\times(0,\infty)$, as claimed in the statement of this theorem.

Firstly, observe that, similarly to the observation in the proof of Theorem~\ref{thm:Thm1}, Hypothesis \textbf{HP}$\mathbf{2}$ provides, in particular, growth estimates for sets of the form $E_{2^{1/\theta_2}nR}\setminus E_{nR}$, with $\theta_2\geq1$, which will be the relevant sets appearing in the proof below.

In the following, let $C_0, \theta_1, \theta_2$ be as in \textbf{HP}$\mathbf{2}$. Let $\alpha=-\frac{1}{\log R}$. We will use $\eqref{lem2result}$ in Lemma~\ref{thm:Lem2} with the test functions $\phi_n=\phi\eta_n$, where $\eta_n$ is as in $\eqref{eta}$, and $\phi$ is as in $\eqref{phi}$ with
\begin{equation}\label{C_12}
    C_1 > \max\Bigg(\frac{C_0+\theta_2+1}{\theta_2},\frac{2C_0(p-1)q+1}{\theta_2pq(q-p-m+2)}\Bigg).
\end{equation}
Similar to the notation in Lemma~\ref{thm:Lem2}, we let $H _n\coloneq\{ (x,t)\in S=M\times (0,\infty) \;:\;\phi_n(x.t)=1 \}$. Inserting $\phi_n$ into $\eqref{lem2result}$ then yields for $|\alpha|$ small enough
\begin{equation}\label{Thm2maineq}
\begin{split}
    \int_0^{\infty}\int_M&\  u^q\phi_n^s\chi_{\beta}V\;d\mu_adt\\
         &\leq  C \Big\{ |\alpha|^{-1-\frac{(p-1)q}{q-p-m+2}} J_1 + |\alpha|^{-1} J_2\Big\}^{\frac{p-1}{p}} \times  J_3^{\frac{q-[(1-\alpha)(p-1)+(m-1)]}{pq}} \\
    &\ \times \Bigg(\int\int_{S\setminus  H _n}
    u^q\phi_n^s\chi_{\beta}V\;d\mu_a dt \Bigg)^{\frac{(1-\alpha)(p-1)+(m-1)}{pq}}\\
    &\ +C J_4^{\frac{q-1}{q}} \times \Bigg(\int\int_{S\setminus  H _n}
    u^q\phi_n^s\chi_{\beta}V\;d\mu_a dt \Bigg)^{\frac{1}{q}}\\
     \leq &\ C \Bigg[ |\alpha|^{-\frac{p-1}{p}} \Big\{ |\alpha|^{-\frac{(p-1)q}{q-p-m+2}}J_1 + J_2 \Big\}^{\frac{p-1}{p}} \times  J_3^{\frac{q-[(1-\alpha)(p-1)+(m-1)]}{pq}} +J_4^{\frac{q-1}{q}}  \Bigg]\\
    &\ \times \Bigg[ \Bigg(\int\int_{S\setminus  H _n}
    u^q\phi_n^s\chi_{\beta}V\;d\mu_a dt \Bigg)^{\frac{(1-\alpha)(p-1)+(m-1)}{pq}}\\
    &\ \;\;\;\;\;\;\;\; + \Bigg(\int\int_{S\setminus  H _n}
    u^q\phi_n^s\chi_{\beta}V\;d\mu_a dt \Bigg)^{\frac{1}{q}} \Bigg], \\
\end{split}
\end{equation}
where
\begin{align}
    J_1 & \coloneq\int\int_{S\setminus H_n} |\nabla\phi_n|^{p\frac{q+\alpha}{q-p-m+2}} V^{-\frac{p+\alpha+m-2}{q-p-m+2}}  \;d\mu_a dt, \label{J1}\\
    J_2 & \coloneq\int\int_{S\setminus H_n} |\partial_t\phi_n|^{\frac{q+\alpha}{q-1}} V^{-\frac{\alpha+1}{q-1}}  \;d\mu_a dt,  \label{J2}\\
    J_3 & \coloneq\int\int_{S\setminus H_n}  |\nabla \phi_n|^{\frac{pq}{q-[(1-\alpha)(p-1)+(m-1)]}} V^{-\frac{(1-\alpha)(p-1)+(m-1)}{q-[(1-\alpha)(p-1)+(m-1)]}} \;d\mu_a dt,  \label{J3}\\
    J_4 & \coloneq\int\int_{S\setminus H_n}  |\partial_t\phi_n|^{\frac{q}{q-1}} V^{-\frac{1}{q-1}} \;d\mu_adt.  \label{J4}
\end{align}

We begin by estimating $J_1$ in $\eqref{J1}$. Observe that, by $\eqref{a}$,
\begin{equation*}
    J_1\leq C( I_1+I_2),
\end{equation*}
with $I_1$ and $I_2$ as in $\eqref{I_1}$, $\eqref{I_2}$ in Theorem~\ref{thm:Thm1}. Proceeding as in the estimates for $I_1$ and $I_2$ but with $\eqref{HP22}$ in \textbf{HP}$\mathbf{2}$ instead of $\eqref{HP12}$ in \textbf{HP}$\mathbf{1}$, we arrive at
\begin{equation*}
    |\alpha|^{-\frac{(p-1)q}{q-p-m+2}} J_1
    \leq C(1+|\alpha|^{-\frac{(p-1)q}{q-p-m+2}}n^{-\frac{|\alpha|}{q-p-m+2}} \big[\log(nR)\big]^{\bar{s}_4}).
\end{equation*}
Indeed, we only have to replace $s_4$ in $\eqref{I1fin}$ and $\eqref{I2fin}$ with $\bar{s}_4$. We conclude that
\begin{equation}\label{j1l1}
    \limsup_{n\rightarrow\infty}  |\alpha|^{-\frac{(p-1)q}{q-p-m+2}} J_1  \leq C.
\end{equation}

Turning to $J_2$ in $\eqref{J2}$, we see that, by $\eqref{a}$,
\begin{equation*}
    J_2\leq C (I_3+I_4),
\end{equation*}
with $I_3$ and $I_4$ as in $\eqref{I_3}$, $\eqref{I_4}$ in Theorem~\ref{thm:Thm1}. Similarly to the reasoning for $J_1$ above, one can show that Estimate $\eqref{HP21}$ in \textbf{HP}$\mathbf{2}$ instead of Estimate $\eqref{HP11}$ in \textbf{HP}$\mathbf{1}$ yields
\begin{equation*}
    \begin{split}
        J_2 \leq C(1+n^{-\frac{|\alpha|}{q-1}}\big[\log(nR)\big]^{\bar{s}_2}).
    \end{split}
\end{equation*}
Indeed, we only have to replace $s_2$ in $\eqref{I34fin}$ with $\bar{s}_2$. Thus,
\begin{equation}\label{j2l1}
    \limsup_{n\rightarrow\infty}  J_2  \leq C.
\end{equation}

In order to estimate $J_4$ in $\eqref{J4}$ note that this integral coincides with the integral $J_5$ on p. $956$ in \cite{dariospaper}, with the exception that we integrate against the weighted measure $\mu_a$. Following the estimate for $J_5$ in \cite{dariospaper} step by step and inserting $\eqref{HP21}$ in \textbf{HP}$2$ for $\epsilon =0$ (see Remark ~\ref{thm:rmkepsilon}), we see that:
\begin{equation}\label{j4l1}
    \limsup_{n\rightarrow\infty} J_4 \leq C.
\end{equation}
So it remains to estimate $J_3$ in $\eqref{J3}$. To this end, we introduce
\begin{equation*}%\label{Delta}
    \delta\coloneq\frac{|\alpha|(p-1)q}{(q-p-m+2)\{q-[(1-\alpha)(p-1)+(m-1)]\}}.
\end{equation*}
Then
\begin{equation}\label{deltaest}
    0<\frac{|\alpha|(p-1)q}{(q-p-m+2)^2} < \delta  <  \frac{2|\alpha|(p-1)q}{(q-p-m+2)^2}
\end{equation}
for $0<|\alpha|<\frac{q-p-m+2}{2(p-1)}$. With this, we can rewrite the powers of $V$ and $|\nabla\phi|$ in $J_3$:
\begin{equation*}
   \frac{(1-\alpha)(p-1)+(m-1)}{q-[(1-\alpha)(p-1)+(m-1)]} = \bar{s}_4+\delta
\end{equation*}
and
\begin{equation}\label{iddeltap}
     \frac{pq}{q-[(1-\alpha)(p-1)+(m-1)]} = \frac{\bar{s}_3}{\theta_2}+p\delta.
\end{equation}

Now $J_3$ can be rewritten and bounded as follows, using $\eqref{a}$:
\begin{equation}\label{j31}
\begin{split}
       J_3 &\ =\int\int_{S\setminus  H _n}  |\nabla \phi|^{\frac{pq}{q-[(1-\alpha)(p-1)+(m-1)]}} V^{-\frac{(1-\alpha)(p-1)+(m-1)}{q-[(1-\alpha)(p-1)+(m-1)]}} \;d\mu_a dt\\
       &\ =\int\int_{S\setminus  H _n}  |\nabla \phi_n|^{\frac{\bar{s}_3}{\theta_2}+p\delta} V^{-\bar{s}_4-\delta} \;d\mu_a \, dt \\
       &\ \leq C \Bigg( \int\int_{E_R^c}   |\nabla \phi|^{\frac{\bar{s}_3}{\theta_2}+p\delta} V^{-\bar{s}_4-\delta} \;d\mu_a dt\\
       & \quad \;\;\;\;\;\;\;\;+ \int\int_{E_{2^{1/\theta_2}}nR\setminus E_{nR}}  \phi_n^{\frac{\bar{s}_3}{\theta_2}+p\delta} |\nabla \eta_n|^{\frac{\bar{s}_3}{\theta_2}+p\delta} V^{-\bar{s}_4-\delta} \;d\mu_a dt\Bigg) \\
       &\ \revcoloneqq C (I_7 + I_8).\\
\end{split}
\end{equation}

Let us first look at $I_7$. By computing the gradient of $\phi$ and using $|\nabla r(x)|\leq1$, we can estimate
\begin{equation}\label{I7first}
\begin{split}
   I_7 &\ \leq C |\alpha|^{\left( \frac{\bar{s}_3}{\theta_2} + p\delta\right)} \int\int_{E_R^c}  \left[ \frac{r(x)^{\theta_2 }+ t^{\theta_1}}{R^{\theta_2}} \right]^{(C_1\alpha-1)\left( \frac{\bar{s}_3}{\theta_2} + p\delta \right)}\\
     & \quad \;\;\;\;\,\;\;\;\;\;\;\;\;\;\;\;\;\;\;\;\;\;\;\;\;\;\;\;\;\;\;\;\;\;\;\times\left(\frac{r(x)^{(\theta_2-1)}}{R^{\theta_2}}\right)^{ \frac{\bar{s}_3}{\theta_2} + p\delta } V^{-\bar{s}_4 - \delta} \; d\mu_a dt \\
    &\ \leq C |\alpha|^{\left( \frac{\bar{s}_3}{\theta_2} + p\delta\right)}
    R^{C_1|\alpha|\left( \frac{\bar{s}_3}{\theta_2} + p\delta\right)}  \\
    &\ \;\;\;\; \times \int\int_{E_R^c}  \Big[ \big(r(x)^{\theta_2 }+ t^{\theta_1} \big)^{\frac{1}{\theta_2}} \Big]^{\theta_2(C_1\alpha-1)\left( \frac{\bar{s}_3}{\theta_2} + p\delta \right)} r(x)^{(\theta_2-1)\left( \frac{\bar{s}_3}{\theta_2} + p\delta \right)} V^{-\bar{s}_4 - \delta} \; d\mu_a dt \\
\end{split}
\end{equation}
In addition, note that if $F\colon[0,\infty)\to[0,\infty)$ is decreasing and $\eqref{HP23}$ in \textbf{HP}$\mathbf{2}$ holds, then for every $0<\epsilon<\epsilon_0$ and $R>R_0$,
\begin{equation*}%\label{thmf2}
\begin{split}
    \int\int_{E_R^c} F\Big( & \Big[ r(x)^{\theta_2} + t^{\theta_1} \Big]^{1/\theta_2} \Big) r(x)^{(\theta_2-1)\left( \frac{\bar{s}_3}{\theta_2} + p\epsilon \right)} V^{-\bar{s}_4 - \epsilon} \; d\mu_a dt \\
     & = \int\int_{E_R^c} F\Big( \Big[ r(x)^{\theta_2} + t^{\theta_1} \Big]^{1/\theta_2} \Big) r(x)^{(\theta_2-1)p\left(\frac{q}{q-p-m+2} + \epsilon\right)}  V^{-\frac{p+m-2}{q-p-m+2} - \epsilon} \;d\mu_a dt \\
    & \leq C \int_{R/2^{1/\theta_2}} F(z) z^{\bar{s}_3 + C_0\epsilon - 1}
    \log(z)^{\bar{s}_4} \;dz.
\end{split}
\end{equation*}
This can be shown through minor adjustments of Formula $(2.19)$ in \cite{GrigS}.

We apply this to $\eqref{I7first}$ with $\epsilon=\delta$, where $\delta<\epsilon_0$ for $|\alpha|$ small enough by $\eqref{deltaest}$; we further make use of the observation in $\eqref{R est}$. Then
\begin{equation}\label{i7}
\begin{split}
    I_7  &\ \leq C|\alpha|^{\left( \frac{\bar{s}_3}{\theta_2} + p\delta\right)}  \int_{R/2^{1/\theta_2}}^{\infty} z^{ \theta_2(C_1\alpha-1) \left( \frac{\bar{s}_3}{\theta_2} + p\delta \right) + \bar{s}_3 + C_0\delta - 1} \log(z)^{\bar{s}_4} \; dz \\
    &\ \revcoloneqq C|\alpha|^{\left( \frac{\bar{s}_3}{\theta_2} + p\delta\right)} I_7'.
\end{split}
\end{equation}
In order to estimate $I_7'$, let
\begin{equation*}
    a \coloneq \theta_2(C_1\alpha-1) \frac{pq}{q-[(1-\alpha)(p-1)+(m-1)]}+\bar{s}_3+C_0\delta.\\
\end{equation*}
By inserting the upper bound for $\delta$ in $\eqref{deltaest}$ and using the lower bound for $C_1$ in $\eqref{C_12}$, we can estimate
\begin{equation}\label{thm2a}
\begin{split}
    a &\ < \theta_2(C_1\alpha-1) \frac{pq}{q-p-m+2}+\frac{pq\theta_2}{q-p-m+2}+\frac{2C_0|\alpha|(p-1)q}{(q-p-m+2)^2}\\
    &\ = -\frac{|\alpha|}{(q-p-m+2)^2} \Big( C_1 \theta_2 pq(q-p-m+2)-2C_0(p-1)q \Big) \\
    &\ < -\frac{|\alpha|}{(q-p-m+2)^2} <0.
\end{split}
\end{equation}
Furthermore, by the change of variables $y=|a|\log z$, and $\eqref{thm2a}$, we see that
\begin{equation}\label{lasti7'}
\begin{split}
    I_7' &\ = \int_{R/2^{1/\theta_2}}^{\infty} z^{a - 1} \log(z)^{\bar{s}_4} \; dz  \leq \int_0^{\infty} e^{-y}\Big( \frac{y}{|a|} \Big)^{\bar{s}_4}\frac{1}{|a|}\;dy \leq C |\alpha|^{-\bar{s}_4-1}.
\end{split}
\end{equation}
In summary, we conclude from $\eqref{lasti7'}$, $\eqref{i7}$ and $\eqref{iddeltap}$
\begin{equation}\label{lastj31}
    I_7\leq C|\alpha|^{\frac{pq}{q-[(1-\alpha)(p-1)+(m-1)]}-\bar{s}_4-1}\leq C|\alpha|^{\frac{pq}{q-[(1-\alpha)(p-1)+(m-1)]}-\frac{q}{q-p-m+2}}.
\end{equation}

It remains to consider $I_8$ in $\eqref{j31}$. Recall that $|\nabla r(x)|\leq 1$ when computing $\nabla\eta_n$. Then
\begin{equation*}
\begin{split}
       I_8 &\ \leq \Big( \sup_{E_{2^{1/\theta_2}nR}\setminus E_{nR}} \phi \Big)^{\frac{\bar{s}_3}{\theta_2}+p\delta}
       \int\int_{E_{2^{1/\theta_2}nR}\setminus E_{nR}} \Bigg( \frac{\theta_2 r(x)^{\theta_2-1}}{(nR)^{\theta_2}} \Bigg)^{\frac{\bar{s}_3}{\theta_2}+p\delta}V^{-\bar{s}_4-\delta}\;d\mu_a \, dt \\
       &\ \leq C n^{\theta_2C_1\alpha\left(\frac{\bar{s}_3}{\theta_2}+p\delta\right)}
       (nR)^{-\theta_2 \left(\frac{\bar{s}_3}{\theta_2}+p\delta\right)}
        \int\int_{E_{2^{1/\theta_2}nR}\setminus E_{nR}}
        r(x)^{(\theta_2-1)\left(\frac{\bar{s}_3}{\theta_2}+p\delta\right)}V^{-\bar{s}_4-\delta}  \;d\mu_a  dt \\
        &\ = C n^{\theta_2(C_1\alpha-1)\left(\frac{\bar{s}_3}{\theta_2}+p\delta\right)}
       R^{-\theta_2\left(\frac{\bar{s}_3}{\theta_2}+p\delta\right)}\\
       &\ \;\;\;\; \times \int\int_{E_{2^{1/\theta_2}nR}\setminus E_{nR}}
        r(x)^{(\theta_2-1)p\left(\frac{q}{q-p-m+2}+\delta\right)}V^{-\frac{p+m-2}{q-p-m+2}-\delta}  \;d\mu_a dt.\\
\end{split}
\end{equation*}
By $\eqref{HP23}$ in \textbf{HP}$\mathbf{2}$ and the observation in $\eqref{R est}$, this can be estimated by
\begin{equation}\label{thm2I8}
\begin{split}
    I_8 &\ \leq C  n^{\theta_2(C_1\alpha-1)\left(\frac{\bar{s}_3}{\theta_2}+p\delta\right)}
       R^{-\theta_2\left(\frac{\bar{s}_3}{\theta_2}+p\delta\right)}(nR)^{\bar{s}_3+C_0\delta} \big[\log(nR)\big]^{\bar{s}_4}\\
       &\ = C n^{\theta_2C_1\alpha\left(\frac{\bar{s}_3}{\theta_2}+p\delta\right)-\theta_2p\delta+C_0\delta} R^{(-\theta_2p+C_0)\delta}  \big[\log(nR)\big]^{\bar{s}_4}\\
       &\ \leq C n^{\theta_2C_1\alpha\left(\frac{\bar{s}_3}{\theta_2}+p\delta\right)-\theta_2p\delta+C_0\delta} \big[\log(nR)\big]^{\bar{s}_4}.\\
\end{split}
\end{equation}
The power of $n$ in $\eqref{thm2I8}$ can be estimated using the bounds for $\delta$ in $\eqref{deltaest}$ and the assumption on $C_1$ in $\eqref{C_12}$; recall the identity in $\eqref{iddeltap}$:
\begin{equation*}
\begin{split}
    &\ \frac{C_1\theta_2p  q\alpha}{q-[(1-\alpha)(p-1)+(m-1)]}  -\theta_2p\delta + C_0\delta\\
     &\  \;\;\;\;\;\;\;\;\;\;\;\;\;\;\;\; <  \frac{C_1\theta_2p  q\alpha}{q-p-m+2}  + C_0\delta \\
    &\ \;\;\;\;\;\;\;\;\;\;\;\;\;\;\;\;  < -\frac{|\alpha|}{(q-p-m+2)^2} \Big( C_1\theta_2pq (q-p-m+2)-2C_0(p-1)q\Big)\\
    &\ \;\;\;\;\;\;\;\;\;\;\;\;\;\;\;\;  < -\frac{|\alpha|}{(q-p-m+2)^2} <0.\\
\end{split}
\end{equation*}
Thus, $\eqref{thm2I8}$ becomes
\begin{equation}\label{lastj32}
    I_8 \leq C n^{-\frac{|\alpha|}{(q-p-m+2)^2}} \big[\log(nR)\big]^{\bar{s}_4}.
\end{equation}

By combining $\eqref{lastj31}$ and $\eqref{lastj32}$, we arrive at
\begin{equation*}
    J_3\leq C\Big(|\alpha|^{\frac{pq}{q-[(1-\alpha)(p-1)+(m-1)]}-\frac{q}{q-p-m+2}}  + n^{-\frac{|\alpha|}{(q-p-m+2)^2}} \big[\log(nR)\big]^{\bar{s}_4}\Big),
\end{equation*}
and hence,
\begin{equation}\label{j3further}
\begin{split}
    \limsup_{n\rightarrow\infty} J_3 &\ \leq C|\alpha|^{\frac{pq}{q-[(1-\alpha)(p-1)+(m-1)]}-\frac{q}{q-p-m+2}}\\
    &\ = C |\alpha|^{\frac{(p-1)q}{q-[(1-\alpha)(p-1)+(m-1)]}+\frac{|\alpha|q(p-1)}{\{q-[(1-\alpha)(p-1)+(m-1)]\}(q-p-m+2)}}\\
    &\ \leq C |\alpha|^{\frac{(p-1)q}{q-[(1-\alpha)(p-1)+(m-1)]}}.
\end{split}
\end{equation}

Now, return to Inequality $\eqref{Thm2maineq}$ and notice:
Since $p>1$ and $q>\max(p+m-2, 1)$, there exists a $\gamma\in(0,1)$ such that for every sufficiently small $|\alpha|$, we have
\begin{equation*}
    0<\frac{(1-\alpha)(p-1)+(m-1)}{pq}<\gamma \;\;\;\mathrm{and}\;\;\; 0<\frac{1}{q}<\gamma.
\end{equation*}

Therefore, from $\eqref{Thm2maineq}$, we obtain
\begin{equation}\label{1701}
\begin{split}
    \int_0^{\infty}\int_M &\  u^q\phi_n^s\chi_{\beta}V\;d\mu_adt \\
     \leq & C \Bigg[ |\alpha|^{-\frac{p-1}{p}} \Big\{ |\alpha|^{-\frac{(p-1)q}{q-p-m+2}}J_1 + J_2 \Big\}^{\frac{p-1}{p}}\, J_3^{\frac{q-[(1-\alpha)(p-1)+(m-1)]}{pq}} +J_4^{\frac{q-1}{q}}  \Bigg]\\
    &\ \times\Bigg(1 + \int_0^{\infty}\int_M
    u^q\phi_n^s\chi_{\beta}V\;d\mu_a dt \Bigg)^{\gamma}. \\
\end{split}
\end{equation}

Observe that, by the Monotone Convergence Theorem,
\begin{equation}\label{introj}
    J \coloneq \lim_{n\rightarrow\infty} \int_0^{\infty}\int_M  u^q\phi_n^s\chi_{\beta} V\;d\mu_adt = \int_0^{\infty}\int_M  u^q\phi^s\chi_{\beta} V\;d\mu_adt \in [0,\infty].
\end{equation}

In addition, by $\eqref{j1l1}$, $\eqref{j2l1}$, $\eqref{j4l1}$ and $\eqref{j3further}$
\begin{equation}\label{sup42}
    \limsup_{n\rightarrow\infty}\Bigg[|\alpha|^{-\frac{p-1}{p}} \Big\{ |\alpha|^{-\frac{(p-1)q}{q-p-m+2}}J_1 + J_2 \Big\}^{\frac{p-1}{p}} \times  J_3^{\frac{q-[(1-\alpha)(p-1)+(m-1)]}{pq}} +J_4^{\frac{q-1}{q}}\Bigg]\leq C.
\end{equation}

Combining $\eqref{1701}$, $\eqref{introj}$ and $\eqref{sup42}$, we arrive at
\begin{equation*}
    J\leq C(1+J)^{\gamma}.
\end{equation*}
In particular, as $\gamma\in(0,1)$,
\begin{equation}\label{jfin}
    J= \int_0^{\infty}\int_M  u^q\phi^s\chi_{\beta} V\;d\mu_adt\leq C
\end{equation}
for some $C$ independent of $\alpha,\beta$ and $R$ (for sufficiently large $R>1$, i.e., $|\alpha|$ small enough).

We now proceed to prove that
\begin{equation*}
    \int_0^{\infty}\int_M  u^q\chi_{\beta} V\;d\mu_adt = 0.
\end{equation*}

Recall that $H _n=\{ (x,t)\in S=M\times (0,\infty) \;:\;\phi_n(x.t)=1 \}\supset E_R$. From $\eqref{Thm2maineq}$ we obtain
\begin{equation}\label{1057}
\begin{split}
    \int\int_{E_R} &\  u^q\chi_{\beta}V\;d\mu_adt\\
     \leq & \int_0^{\infty}\int_M  u^q\phi_n^s\chi_{\beta} V\;d\mu_adt\\
     \leq &\ C \Bigg[ |\alpha|^{-\frac{p-1}{p}} \Big\{ |\alpha|^{-\frac{(p-1)q}{q-p-m+2}}J_1 + J_2 \Big\}^{\frac{p-1}{p}} \times  J_3^{\frac{q-[(1-\alpha)(p-1)+(m-1)]}{pq}} +J_4^{\frac{q-1}{q}}  \Bigg]\\
    &\ \times \Bigg[ \Bigg(\int\int_{E_R^c}
    u^q\phi_n^s\chi_{\beta}V\;d\mu_a dt \Bigg)^{\frac{(1-\alpha)(p-1)+(m-1)}{pq}}\\
    &\ \;\;\;\;\;\;\;\; + \Bigg(\int\int_{E_R^c}
    u^q\phi_n^s\chi_{\beta}V\;d\mu_a dt \Bigg)^{\frac{1}{q}} \Bigg].\\
\end{split}
\end{equation}

Note that, by the Monotone Convergence Theorem,
\begin{equation*}
    \lim_{n\rightarrow\infty} \int\int_{E_R^c}
    u^q\phi_n^s\chi_{\beta}V\;d\mu_a dt = \int\int_{E_R^c}
    u^q\phi^s\chi_{\beta}V\;d\mu_a dt.
\end{equation*}
Combining this with $\eqref{sup42}$ in $\eqref{1057}$, we have for sufficiently large $R>1$ and any $\beta>0$, with $C$ independent of $R,\alpha$ and $\beta$,
\begin{equation*}\begin{aligned}
    &\int\int_{E_R}
    u^q\chi_{\beta}V\;d\mu_a dt\\
    & \leq C \Bigg[ \Bigg(\int\int_{E_R^c}
    u^q\phi^s\chi_{\beta}V\;d\mu_a dt \Bigg)^{\frac{(1-\alpha)(p-1)+(m-1)}{pq}} + \Bigg(\int\int_{E_R^c}
    u^q\phi^s\chi_{\beta}V\;d\mu_a dt \Bigg)^{\frac{1}{q}} \Bigg].
\end{aligned}\end{equation*}
Finally, we take the limit as $R\rightarrow\infty$. Using $\eqref{jfin}$, we conclude that
\begin{equation*}
     \int_0^{\infty}\int_M  u^q\chi_{\beta}V\;d\mu_adt = 0.
\end{equation*}
This is precisely Identity $\eqref{thm21aim}$. Therefore, $u=0$ a.e. on $M\times (\beta,\infty)$ for all $\beta>0$. It follows that $u=0$ a.e. on $M\times(0,\infty)$. This completes the proof of Theorem ~\ref{thm:Thm2}.
\end{proof}

\section{Proof of Corollaries ~\ref{thm:cor1}--~\ref{thm:lastcor'}}

\begin{proof}[Proof of Corollary~\ref{thm:cor1}]
We estimate
    \begin{equation*}
    \begin{split}
        \int\int_{E_{2^{1/\theta_2}R\setminus E_R}} t^{(\theta_1-1)\left( \frac{q}{q-1}-\epsilon \right)}\;d\mu_a dt
        &\ \leq \Big(\int_0^{2^{1/\theta_1}R^{\theta_2/\theta_1}} t^{(\theta_1-1)\left( \frac{q}{q-1}-\epsilon \right)} \;dt \Big) \Big( \int_{B_{2^{1/\theta_2}R}} 1 \;d\mu_a \Big) \\
        &\ \leq C R^{\frac{\theta_2}{\theta_1}\left( (\theta_1-1)\left( \frac{q}{q-1}-\epsilon \right) +1 \right)} R^{N} .
        \end{split}
    \end{equation*}
Then $\eqref{HP11}$ in \textbf{HP}$\mathbf{1}$ is satisfied for any choice of $C_0,\epsilon_0>0, R_0>1$ and for
\begin{equation*}%\label{coratheta1}
    \frac{\theta_2}{\theta_1}\geq (q-1)N.
\end{equation*}
Similarly,
\begin{equation*}
\begin{split}
    &\int\int_{E_{2^{1/\theta_2}R\setminus E_R}} |x|^{(\theta_2-1)p\left( \frac{q}{q-p-m+2}-\epsilon \right)} \;d\mu_a dt\\
    &\ \leq \Big(\int_0^{2^{1/\theta_1}R^{\theta_2/\theta_1}} 1 \;dt \Big) \Big( \int_{B_{2^{1/\theta_2}R}} |x|^{(\theta_2-1)p\left( \frac{q}{q-p-m+2}-\epsilon \right)} \;d\mu_a \Big) \\
    &\ \leq C R^{\frac{\theta_2}{\theta_1}} \int_0^{2^{1/\theta_2}R} r^{(\theta_2-1)p\left( \frac{q}{q-p-m+2}-\epsilon \right)+N-1}\;dr\\
    &\ = C R^{\frac{\theta_2}{\theta_1}} R^{(\theta_2-1)p\left( \frac{q}{q-p-m+2}-\epsilon \right)+N}.
\end{split}
\end{equation*}
Then $\eqref{HP12}$ in \textbf{HP}$\mathbf{1}$ is satisfied for any choice of $C_0,\epsilon_0>0, R_0>1$ and
\begin{equation*}%\label{cor1theta2}
    \frac{\theta_2}{\theta_1}\leq \frac{pq}{q-p-m+2}-N.
\end{equation*}
Hence, Theorem~\ref{thm:Thm1} is applicable and nonexistence follows if $q>\max(1,p+m-2)$ and
\begin{equation*}
    q\leq \frac{p}{N}+p+m-2.
\end{equation*}
\end{proof}

\begin{proof}[Proof of Corollary~\ref{thm:cor2}]

Applying $\eqref{cor2vsep}$, $\eqref{cor2fhest}$ and $\eqref{cor2hint}$ in this order, we see that, for $R$ large enough and $\frac{1}{q-1}>\epsilon>0$
\begin{equation*}%\label{cor2comp1}
\begin{split}
    \int\int_{E_{2^{1/\theta_2}R \setminus E_R}} &\
    t^{(\theta_1-1)\left( \frac{q}{q-1}-\epsilon \right)} V^{-\frac{1}{q-1}+\epsilon} \; d\mu_a \, dt \\
    &\leq  \Bigg( \int_0^{2^{1/\theta_1}R^{\theta_2/\theta_1}} t^{(\theta_1-1)\left( \frac{q}{q-1}-\epsilon \right)}
    f(t)^{-\frac{1}{q-1}+\epsilon} \; dt \Bigg) \Bigg( \int_{B_{2^{1/\theta_2}R}} h(x)^{-\frac{1}{q-1}+\epsilon} \; d\mu_a \Bigg) \\
    &\leq C R^{\frac{\theta_2}{\theta_1}(\theta_1-1)\left( \frac{q}{q-1}-\epsilon \right)+\frac{\theta_2}{\theta_1}\alpha_2\epsilon+\alpha_1\epsilon} \\
    &\quad \times \Bigg( \int_0^{2^{1/\theta_1}R^{\theta_2/\theta_1}} f(t)^{-\frac{1}{q-1}} \; dt \Bigg) \Bigg( \int_{B_{2^{1/\theta_2}R}} h(x)^{-\frac{1}{q-1}} \; d\mu_a \Bigg) \\
    &\leq C R^{\frac{\theta_2}{\theta_1}(\theta_1-1)\left( \frac{q}{q-1}-\epsilon \right)+\frac{\theta_2}{\theta_1}\alpha_2\epsilon+\alpha_1\epsilon+\frac{\theta_2}{\theta_1}\sigma_2+\sigma_1}
    (\log R)^{\delta_1+\delta_2}.
\end{split}
\end{equation*}
Then $\eqref{HP11}$ in \textbf{HP}$\mathbf{1}$ is satisfied for any
$\theta_1,\theta_2\geq1$, $C_0>\max\big(0,\frac{\theta_2}{\theta_1}(\alpha_2+1)+\alpha_1-\theta_2\big)$ and
\begin{equation}\label{cor2res2}
    \delta_1+\delta_2 < \bar{s}_2=\frac{1}{q-1}\;\;\;\;\;\mathrm{and}\;\;\;\;\;\frac{\theta_2}{\theta_1}\left( \sigma_2-\frac{q}{q-1}
 \right)+\sigma_1\leq0.
\end{equation}
Similarly, applying $\eqref{cor2vsep}$, $\eqref{cor2fhest}$ and $\eqref{cor2fint}$ in this order, we see that, for $R$ large enough and any $\frac{p+m-2}{q-p-m+2}>\epsilon>0$:
\begin{equation*}%\label{cor2comp2}
\begin{split}
     \int \int_{E_{2^{1/\theta_2} R\setminus E_R}} &\ r(x)^{(\theta_2-1)p\left( \frac{q}{q-p-m+2}-\epsilon \right)} V^{-\frac{p+m-2}{q-p-m+2}+\epsilon} \;d\mu_a dt\\
     &\ \leq \Bigg(\int_0^{2^{1/\theta_1}R^{\theta_2/\theta_1}} f(t)^{-\frac{p+m-2}{q-p-m+2}+\epsilon}\;dt \Bigg) \\
     &\ \;\;\;\; \times\Bigg( \int_{B_{2^{1/\theta_2}R}} r(x)^{(\theta_2-1)p\left( \frac{q}{q-p-m+2}-\epsilon \right)} h(x)^{-\frac{p+m-2}{q-p-m+2}+\epsilon} \;d\mu_a \Bigg)\\
     &\ \leq C R^{\frac{\theta_2}{\theta_1}\alpha_2\epsilon+(\theta_2-1)p\left( \frac{q}{q-p-m+2}-\epsilon \right)+\alpha_1\epsilon} \Bigg(\int_0^{2^{1/\theta_1}R^{\theta_2/\theta_1}} f(t)^{-\frac{p+m-2}{q-p-m+2}}\;dt \Bigg) \\
     &\   \;\;\;\;\times\Bigg( \int_{B_{2^{1/\theta_2}R}}  h(x)^{-\frac{p+m-2}{q-p-m+2}} \;d\mu_a \Bigg)\\
     &\ \leq CR^{\frac{\theta_2}{\theta_1}\alpha_2\epsilon+(\theta_2-1)p\left( \frac{q}{q-p-m+2}-\epsilon \right)+\alpha_1\epsilon+\frac{\theta_2}{\theta_1}\sigma_4+\sigma_3} (\log R)^{\delta_3+\delta_4}.
\end{split}
\end{equation*}
So $\eqref{HP12}$ in \textbf{HP}$\mathbf{1}$ is satisfied for any
$\theta_1,\theta_2\geq1$, $C_0>\max\big(0,\frac{\theta_2}{\theta_1}\alpha_2-(\theta_2-1)p+\alpha_1\big)$ and
\begin{equation}\label{cor2res1}
    \delta_3+\delta_4 < \bar{s}_4=\frac{p+m-2}{q-p-m+2}\;\;\;\;\;\mathrm{and}\;\;\;\;\;\frac{\theta_2}{\theta_1}\sigma_4+\left(\sigma_3-\frac{pq}{q-p-m+2}\right)\leq0.
\end{equation}
For conditions $\eqref{cor2res2}$ and $\eqref{cor2res1}$ to hold, by our assumptions, it is sufficient to choose $\theta_1,\theta_2\geq1$ such that
\begin{equation*}
   \sigma_1 \left(\frac{q}{q - 1} - \sigma_2\right)^{-1} \leq \frac{\theta_2}{\theta_1}
\quad \text{if} \quad 0 \leq \sigma_2 < \frac{q}{q - 1}
\end{equation*}
and
\begin{equation*}
    \frac{\theta_2}{\theta_1} \leq \left(\frac{pq}{q - p -m+2} - \sigma_3\right) \sigma_4^{-1}
\quad \text{if} \quad 0 \leq \sigma_3 < \frac{pq}{q - p -m+2}.
\end{equation*}
Theorem~\ref{thm:Thm1} completes the proof.
\end{proof}

\begin{proof} [Proof of Corollary~\ref{thm:cor3}]
The proof follows the same strategy as in Corollary~\ref{thm:cor2}, now applying Theorem~\ref{thm:Thm2}.
\end{proof}

\begin{proof}[Proof of Corollary~\ref{thm:cor2'}]
This is an immediate consequence of Corollary~\ref{thm:cor2}, obtained by inserting $f\equiv1$ and $\sigma_2=\sigma_4=1,\delta_1=\delta_4=0$.
\end{proof}

\begin{proof}[Proof of Corollary~\ref{thm:cor3'}]
This is an immediate consequence of Corollary~\ref{thm:cor3}, obtained by inserting $f\equiv1$ and $\sigma_2=\sigma_4=1,\delta_1=\delta_4=0$.
\end{proof}

\begin{proof}[Proof of Corollary~\ref{thm:lastcor'}]
This is an immediate consequence of Corollary~\ref{thm:cor3'}, obtained by inserting $h\equiv1$.
\end{proof}

\par\bigskip\noindent
\textbf{Acknowledgments.}
The authors are members of the Gruppo Nazionale per l'Analisi Matematica, la Probabilit\`a e le loro Applicazioni (GNAMPA, Italy) of the Istituto Nazionale di Alta Matematica (INdAM, Italy). The first author is partially supported by the PRIN project 2022 ''Partial differential equations and related geometric-functional inequalities", ref. 20229M52AS. The second author is partially supported by the PRIN projects 2022 Geometric-analytic methods for PDEs
and applications, ref. 2022SLTHCE. Both PRIN projects above are
financially supported by the EU, in the framework of the "Next Generation EU initiative".

%\addcontentsline{toc}{section}{References}
%\bibliographystyle{plain}
%\bibliography{bibliography.bib}

\end{document}